\newcommand{\Ab}{\mathbf A}
\newcommand{\Fb}{\mathbf F}
\newcommand{\R}{\mathbb R}
\newcommand{\Z}{\mathbb Z}
\newcommand{\N}{\mathbb N}
\newcommand{\C}{\mathbb C}
\DeclareMathOperator{\E0}{E_{\rm g}} 
\DeclareMathOperator{\IM}{Im}
\DeclareMathOperator{\curl}{curl}\DeclareMathOperator{\Div}{div}
\newtheorem{thm}{Theorem}[section]
\newtheorem{prop}[thm]{Proposition}
\newtheorem{lem}[thm]{Lemma}
\newtheorem{corol}[thm]{Corollary}
\newtheorem{theorem}[thm]{Theorem}
\newtheorem{lemma}[thm]{Lemma}
\theoremstyle{remark}
\newtheorem{rem}[thm]{Remark}
\numberwithin{equation}{section}
\title[2D Ginzburg-Landau  functional]{energy and vorticity of the Ginzburg-Landau model with variable magnetic field}
\author[K.Attar]{}
\author[]{K. Attar}
\begin{document}
\begin{abstract}
We consider the Ginzburg-Landau functional with a variable applied magnetic field in a bounded and smooth two
dimensional domain. The applied magnetic field varies smoothly and is allowed to vanish non-degenerately along a curve. Assuming that the strength of the applied magnetic field varies between two characteristic scales, and the Ginzburg-Landau parameter tends to $+\infty$, we determine an accurate asymptotic formula for the minimizing energy and show that the energy minimizers have vortices. The new aspect in the presence of a variable magnetic field is that the density of vortices in the sample is not uniform.
\end{abstract}
\maketitle
\section{Introduction}
We consider a bounded, open and simply connected set $\Omega\subset\R^2$
with smooth boundary. We suppose that $\Omega$ models a
superconducting sample subject to an applied external magnetic
field. The energy of the sample is given by the Ginzburg-Landau
functional,
\begin{multline}\label{eq-2D-GLf}
\mathcal E_{\kappa,H}(\psi,\Ab)= \int_\Omega\left( |(\nabla-i\kappa
H\Ab)\psi|^2+\frac{\kappa^2}{2}(1-|\psi|^2)^2\right)\,dx
+\kappa^2H^2\int_{\Omega}|\curl\Ab-B_0|^2\,dx\,.
\end{multline}
Here $\kappa$ and $H$ are two positive parameters, to simplify we will consider that $H=H(\kappa)$. The wave function
(order parameter) $\psi\in H^1(\Omega;\C)$ and the magnetic potential
$\Ab\in{H}^1_{\Div}(\Omega)$. The space $H^1_{\Div}(\Omega)$ is defined in \eqref{eq-2D-hs} below. Finally, the function $B_{0}\in C^{\infty}(\overline{\Omega})$ gives the intensity of the external variable magnetic field. Let $\Gamma=\{x\in\overline{\Omega}, B_{0}(x)=0\}$, then, we assume that $B_{0}$ satisfies :
\begin{equation}\label{B(x)}
\left\{
\begin{array}{ll}
|B_{0}| + |\nabla B_0 | >0&\mbox{ in } \overline{\Omega}\\
\nabla B_{0}\cdot \vec{n}\neq 0 &\mbox{on}~ \Gamma\cap\partial\Omega\,.
\end{array}
\right.
\end{equation}

The assumption in \eqref{B(x)} implies that for any open set $\omega$ relatively compact in $\Omega$  the set $\Gamma\cap\omega$ will be either empty, or consists of a union of smooth curves. Here, the definition of the functional \eqref{eq-2D-GLf} is taken as in \cite{SB}. In \cite{SS}, the scaling for the intensity of the external magnetic field (denoted by $h$) is different. We choose the scaling from \cite{SB} for convenience when estimating the ground state energy of the functional.

Let $\Fb:\Omega\rightarrow\R^{2}$ be the unique vector field such that,
\begin{equation}\label{div-curlF}
\Div \Fb=0\,{\rm~and~}\,{\rm\curl \Fb}=B_{0}~{\rm
in~\Omega}\,,\,\,\,~\nu\cdot\Fb=0~{\rm on}~\partial\Omega.
\end{equation}
The vector $\nu$ is the unit interior normal vector of
$\partial\Omega$. We define the
space,
\begin{equation}\label{eq-2D-hs} H^1_{\Div}(\Omega)=\{\Ab=(\Ab_{1},\Ab_{2})\in
H^1(\Omega)^{2}~:~\Div \Ab=0~{\rm in}~\Omega \,,\,\Ab\cdot\nu=0~{\rm
on}\,
\partial\Omega \,\}.
\end{equation}
Critical points $(\psi,\Ab)\in H^1(\Omega;\C)\times
H^1_{\Div}(\Omega)$ of $\mathcal E_{\kappa,H}$ are weak solutions of
the Ginzburg-Landau equations,
\begin{equation}\label{eq-2D-GLeq}
\left\{
\begin{array}{llll}
-(\nabla-i\kappa H\Ab)^2\psi=\kappa^2(1-|\psi|^2)\psi&{\rm in}&
\Omega
\\
-\nabla^{\bot}\curl(\Ab-\Fb)=\displaystyle\frac1{\kappa
H}\IM(\overline{\psi}\,(\nabla-i\kappa
H\Ab)\psi) &{\rm in}& \Omega\\
\nu\cdot(\nabla-i\kappa H\Ab)\psi=0&{\rm
on}&\partial\Omega\\
\curl\Ab=\curl\Fb&{\rm on}&\partial\Omega\,.
\end{array}\right.
\end{equation}
Here, $\curl\Ab=\partial_{x_1}\Ab_{2}-\partial_{x_2}\Ab_{1}$ and
$\nabla^{\bot}\curl\Ab=(\partial_{x_2}(\curl\Ab),
-\partial_{x_1}(\curl\Ab)).$

For a solution $(\psi,\Ab)$ of \eqref{eq-2D-GLeq}, the function $\psi$ describes the superconducting properties of the material and $(\kappa H\curl\Ab)$ is the induced magnetic field. The number $\kappa$ is a parameter describing the properties of the material, and the number $H$ measures the variation of the intensity of the applied magnetic field. We focus on the regime of large values of $\kappa$, $\kappa\rightarrow +\infty$.

In this paper, we study the ground state energy defined as follows:
\begin{equation}\label{eq-2D-gs}
\E0(\kappa,H)=\inf\big\{ \mathcal
E_{\kappa,H}(\psi,\Ab)~:~(\psi,\Ab)\in H^1(\Omega;\C)\times
H^1_{\Div}(\Omega)\big\}\,. 
\end{equation}
More precisely, we give an asymptotic estimate valid when $H(\kappa)$ satisfies: 
\begin{equation}\label{cond-H}
C_{\text{min}}\kappa^{\frac{1}{3}}\leq H(\kappa)\ll\kappa\qquad{\rm as}~\kappa\longrightarrow+\infty\,,
\end{equation}
where $C_{\text{min}}$ is a positive constant.\\
The behavior of $\E0(\kappa,H)$ involves a  function $\hat{f}:[0,1]\longrightarrow[0,\frac{1}{2}]$ introduced in \eqref{f(b)} below. The function $\hat{f}$ is increasing, continuous and $\hat{f}(b)=\frac{1}{2}$, for all $b\geq 1$.

Under the assumption that $B_{0}(x)$ satisfies \eqref{B(x)} and that the function $H=H(\kappa)$ satisfies
\begin{equation}\label{old-condH}
C_{1}\kappa\leq H\leq C_{2}\kappa\,,
\end{equation}
where $C_{1}$ and $C_{2}$ are positive constants, we obtained \footnote{After a change of notation} in \cite{KA} that 
\begin{equation}\label{old-eq}
\E0(\kappa,H)=\kappa^{2}\int_{\Omega}\hat{f}\left(\frac{H}{\kappa}|B_{0}(x)|\right)\,dx+\textit{o}\left(\kappa H\right)\,,\qquad{\rm as}~\kappa\longrightarrow+\infty\,.
\end{equation}
In this paper, we generalize this result to the case when $H(\kappa)$ satisfies \eqref{cond-H}.
\begin{thm}\label{thm-2D-main}
Under Assumptions \eqref{B(x)} and  \eqref{cond-H},  the ground state energy in \eqref{eq-2D-gs} satisfies, as $\kappa\longrightarrow +\infty$
\begin{equation}\label{eq-2D-thm}
 \E0(\kappa,H)=\kappa^{2}\int_{\Omega}  \hat{f}\left(\frac{H}{\kappa}|B_{0}(x)|\right)\,dx+\textit{o}\left(\kappa H\ln\frac{\kappa}{H}\right)\,.
\end{equation}
\end{thm}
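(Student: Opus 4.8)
The plan is to prove that $\E0(\kappa,H)$ equals $\kappa^{2}\int_{\Omega}\hat f(\tfrac{H}{\kappa}|B_{0}(x)|)\,dx$ up to an error $\textit{o}(\kappa H\ln\tfrac{\kappa}{H})$ by establishing matching upper and lower bounds, both obtained by localizing to the Ginzburg--Landau problem with a \emph{constant} magnetic field on mesoscopic squares. Fix a scale $\ell=\ell(\kappa)\to 0$, to be calibrated at the end, and a neighborhood $\mathcal N$ of $\Gamma$ (together with a collar of $\partial\Omega$ absorbing the transversal crossings $\Gamma\cap\partial\Omega$ permitted by \eqref{B(x)}) of width $w=w(\kappa)\to 0$, also free. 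Tile $\Omega\setminus\mathcal N$ by squares $Q_{j}$ of side $\ell$ centred at points $\mathbf a_{j}$. On $Q_{j}$ the field differs from the constant $B_{0}(\mathbf a_{j})$ by $O(\ell)$, and rescaling lengths by $\kappa$ turns the kinetic-plus-potential part of $\mathcal E_{\kappa,H}$ on $Q_{j}$ into the constant-field functional on the square of side $\kappa\ell$ with field parameter $\tfrac{H}{\kappa}|B_{0}(\mathbf a_{j})|$; by the definition of $\hat f$ in \eqref{f(b)}, its ground-state energy divided by $(\kappa\ell)^{2}$ converges to $\hat f(\tfrac{H}{\kappa}|B_{0}(\mathbf a_{j})|)$ as $\kappa\ell\to\infty$, and dominates it for every square size (subadditivity), with an upper remainder that, once $\ell$ is suitably chosen, sums to $\textit{o}(\kappa H\ln\tfrac{\kappa}{H})$. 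Summing $\kappa^{2}\ell^{2}\hat f(\tfrac{H}{\kappa}|B_{0}(\mathbf a_{j})|)$ over $j$ yields a Riemann sum for $\kappa^{2}\int_{\Omega}\hat f(\tfrac{H}{\kappa}|B_{0}|)\,dx$, the error governed by the modulus of continuity of $\hat f$ and the Lipschitz bound on $B_{0}$, while $\kappa^{2}\int_{\mathcal N}\hat f(\tfrac{H}{\kappa}|B_{0}|)\,dx=\textit{o}(\kappa H\ln\tfrac{\kappa}{H})$ since $|B_{0}|=O(w)$ there.

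\noindent\textbf{Upper bound.} Take $\mathbf A=\mathbf F$, so the magnetic term of \eqref{eq-2D-GLf} vanishes identically, and glue the rescaled constant-field minimizers on the $Q_{j}$ by a partition of unity $(\chi_{j})$. On $\mathcal N$ the mesh cannot be used --- as $|B_{0}|\to0$ the natural inter-vortex spacing $\sim(\kappa H|B_{0}|)^{-1/2}$ blows up --- so there one inserts a vortex-free, Meissner-type $\psi$ of modulus close to $1$, exploiting that the total flux $\int\kappa HB_{0}$ through a thin tube symmetric about the sign change of $B_{0}$ on $\Gamma$ is $\textit{o}(1)$; the resulting contribution of $\mathcal N$ is negligible against $\kappa H\ln\tfrac{\kappa}{H}$. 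The energy of the glued state is $\sum_{j}(\text{constant-field ground state on }Q_{j})$ plus: the localization error $\sum_{j}\int|\nabla\chi_{j}|^{2}|\psi|^{2}=O(\ell^{-2})$; the cost of replacing $\mathbf F$ by the constant-field potential matching it at $\mathbf a_{j}$ on each $Q_{j}$, which --- via Cauchy--Schwarz, $\|\psi\|_{\infty}\le1$, $\|\curl\mathbf F-B_{0}(\mathbf a_{j})\|_{L^{\infty}(Q_{j})}=O(\ell)$, and the fact that each constant-field kinetic energy is $O(\kappa^{2}\ell^{2}\hat f)=O(\kappa H\ell^{2}\ln\tfrac{\kappa}{H})$ for the relevant small parameter (this last estimate underlying the very form of the error in \eqref{eq-2D-thm}) --- sums to $O((\kappa H)^{3/2}(\ln\tfrac{\kappa}{H})^{1/2}\ell^{2}+\kappa^{2}H^{2}\ell^{4})$; and the contribution of $\mathcal N$. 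For appropriate $\ell,w$ all of these, and the Riemann-sum error, are $\textit{o}(\kappa H\ln\tfrac{\kappa}{H})$.

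\noindent\textbf{Lower bound.} Let $(\psi,\mathbf A)$ be a minimizer. The maximum principle gives $\|\psi\|_{\infty}\le1$, and the upper bound already proved gives $\mathcal E_{\kappa,H}(\psi,\mathbf A)\le C\kappa H\ln\tfrac{\kappa}{H}$, hence $\|(\nabla-i\kappa H\mathbf A)\psi\|_{L^{2}}^{2}\le C\kappa H\ln\tfrac{\kappa}{H}$. Since the current is bounded pointwise by $|(\nabla-i\kappa H\mathbf A)\psi|$, the second equation of \eqref{eq-2D-GLeq}, the boundary condition $\curl\mathbf A=\curl\mathbf F$, and the div-curl estimate on the simply connected $\Omega$ yield $\|\curl(\mathbf A-\mathbf F)\|_{H^{1}(\Omega)}\le C((\ln\tfrac{\kappa}{H})/(\kappa H))^{1/2}$, and hence control of $\mathbf A-\mathbf F$ in $L^{2+\delta}$ on every square. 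Discarding the nonnegative magnetic term and applying the IMS formula, $\mathcal E_{\kappa,H}(\psi,\mathbf A)\ge\sum_{j}[\int_{Q_{j}}|(\nabla-i\kappa H\mathbf A)\psi_{j}|^{2}+\tfrac{\kappa^{2}}{2}(1-|\psi_{j}|^{2})^{2}]-O(\ell^{-2})$ with $\psi_{j}=\chi_{j}\psi$ (the energy in $\mathcal N$ dropped, at the acceptable cost $\textit{o}(\kappa H\ln\tfrac{\kappa}{H})$). On each $Q_{j}$ replace $\mathbf A$ by a constant-field potential with field $B_{0}(\mathbf a_{j})$; the $L^{\infty}(Q_{j})$ discrepancy is $O(\ell^{2})+O(\ell^{1-2/(2+\delta)}\|\curl(\mathbf A-\mathbf F)\|_{L^{2+\delta}(Q_{j})})$, and a Cauchy--Schwarz summation over $j$ using $\sum_{j}\|\curl(\mathbf A-\mathbf F)\|_{L^{2+\delta}(Q_{j})}^{2+\delta}=\|\curl(\mathbf A-\mathbf F)\|_{L^{2+\delta}(\Omega)}^{2+\delta}$ shows the total replacement cost is $\textit{o}(\kappa H\ln\tfrac{\kappa}{H})$. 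Finally bound each term below by the constant-field ground state $\ge\kappa^{2}\ell^{2}\hat f(\tfrac{H}{\kappa}|B_{0}(\mathbf a_{j})|)$ and convert the Riemann sum to the integral.

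\noindent\textbf{The main obstacle.} The crux --- and where the hypothesis $H\ge C_{\min}\kappa^{1/3}$ is spent --- is the joint calibration of scales. The mesh must be coarse enough that $\kappa\ell\to\infty$ uniformly and that the summed upper remainders of the local problems are $\textit{o}(\kappa H\ln\tfrac{\kappa}{H})$, which forces $\ell$ not too small (heuristically $\ell\gg(H\ln\tfrac{\kappa}{H})^{-1}$); yet fine enough that the $O(\ell)$ variation of $B_{0}$ per square, amplified by the factor $\kappa H$ in the kinetic term and summed over $\sim\ell^{-2}$ squares, contributes only $\textit{o}(\kappa H\ln\tfrac{\kappa}{H})$ (heuristically $\ell\ll((\ln\tfrac{\kappa}{H})/(\kappa H))^{1/4}$); and room must remain for the excised neighborhood $\mathcal N$, across which the vortex density degenerates and a separate, vortex-free construction is required --- this last point, absent in the regime $H\sim\kappa$ of \cite{KA}, being the genuinely new difficulty, and the reason assumption \eqref{B(x)} controls $B_{0}$ on $\Gamma$. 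A direct check shows these windows overlap precisely when $\kappa^{1/3}\lesssim H\ll\kappa$. A subsidiary point is that the a priori bound on $\mathbf A-\mathbf F$ must be exploited per square in $L^{2+\delta}$ rather than globally in $L^{\infty}$, the latter being only borderline against the $\kappa H$ amplification; and, as always in such arguments, these a priori estimates are themselves bootstrapped from the energy upper bound, so the two halves of the proof are logically intertwined.
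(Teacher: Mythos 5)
Your overall architecture is the paper's: tile by squares of side $\ell$, freeze the field at a point of each square, rescale to the constant-field reference problem whose normalized ground-state energy defines $\hat f$ in \eqref{f(b)}, sum the resulting Riemann sum, bootstrap the a priori control of $\Ab-\Fb$ from the energy upper bound, and calibrate $\ell$ in the window $1/(H\ln\frac{\kappa}{H})\ll\ell\ll\bigl(\ln\frac{\kappa}{H}/(\kappa H)\bigr)^{1/4}$, which is indeed nonempty exactly when $H\gtrsim\kappa^{1/3}$ --- you have correctly located where \eqref{cond-H} is spent, and the paper's choice $\ell=(\kappa H)^{-1/4}$ sits in your window. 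Your technical variants on the lower bound (per-square $L^{2+\delta}$ control of $\curl(\Ab-\Fb)$ instead of the global $C^{0,\alpha}$ bound of Proposition~\ref{pr-est} combined with the linear gauge $\phi_{x_0}$; IMS plus the Dirichlet bound \eqref{eD>f(b)} instead of plain restriction plus the Neumann comparison \eqref{eD<ep}) are equivalent in substance and would work.

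Two points, however, do not stand as written. First, for the \emph{upper} bound one cannot ``glue the rescaled constant-field minimizers by a partition of unity'': the IMS-type identity you invoke decomposes a given function, it does not control the energy of a superposition $\sum_j\chi_j u_j$ of distinct local minimizers whose phases are unrelated on the overlaps. The clean construction (the paper's) takes Dirichlet minimizers, extends them by zero, and adds functions with \emph{disjoint} supports, so the kinetic and potential energies are exactly additive. Second, and more substantively, your treatment of the excised set $\mathcal N$ is not justified by the mechanism you give. Near $\Gamma$, the reason a vortex-free state is cheap is not that the signed flux cancels across the sign change of $B_0$ (cancellation of net flux over a region says nothing about the local cost of expelling the field); it is that $|B_0|=O(w)$ pointwise there, so a suitable gauge makes the potential $O(\kappa Hw^2)$ and the kinetic cost $O((\kappa H)^2w^5)$, which is then checked against $\kappa H\ln\frac{\kappa}{H}$. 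On the collar of $\partial\Omega$ away from $\Gamma$ the field is of order $\kappa H$, and a modulus-one vortex-free state on a strip of width comparable to the mesh $\ell=(\kappa H)^{-1/4}$ costs at least of order $(\kappa H)^2\ell^3=(\kappa H)^{5/4}\gg\kappa H\ln\frac{\kappa}{H}$, while setting $\psi=0$ there costs $\kappa^2\ell$, which is $\textit{o}(\kappa H\ln\frac{\kappa}{H})$ only when $H\gtrsim\kappa^{3/5}$ --- exactly the reason the local upper bound in Theorem~\ref{th-loc-enrgy} carries the stronger hypothesis \eqref{cond-H2}. So the uncovered boundary layer requires a separate device (e.g.\ a second, finer tiling near $\partial\Omega$) that your sketch does not supply; be aware that the paper's own step \eqref{sumE1} silently discards the contribution $\frac{\kappa^2}{2}|\Omega\setminus\cup_\gamma Q_{\gamma,\ell}|$ at precisely this point, so this is a place where extra care is genuinely needed rather than a routine omission.
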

We will see in Remark~\ref{R1} that the second term in the right hand side of  \eqref{eq-2D-thm}, which is actually more simply $\textit{o} (\kappa H \ln \kappa)$ when \eqref{cond-H} is satisfied, is of lower order compared with the leading term. Actually (see in Theorem~\ref{pro-f(b)}), the function  $\hat{f}$ satisfies
$$
\hat{f}(b)=\frac{b}{2}\ln \frac{1}{b}\,(1+\hat{s}(b))\,,\qquad {\rm as} ~b\longrightarrow 0\,,
$$
with $\hat{s}(b)=\textit{o}(1)$.\\
As a consequence of the behaviour of $\hat{f}$ above, \eqref{eq-2D-thm} becomes
\begin{equation}\label{sr-eq-2D-thm} 
 \E0(\kappa,H)= \frac 12 \kappa H\left[\int_{\Omega}|B_{0}(x)|\ln \frac{\kappa}{H |B_{0}(x)|}\,dx\right]\,(1+\textit{o}(1))\,.
\end{equation}
When the magnetic field is constant (i.e $B_{0}$ is a constant function), \eqref{sr-eq-2D-thm} is proved in \cite{SS3} under the relaxed condition
\begin{equation}\label{sr-cond-H}
\frac{\ln\kappa}{\kappa}\ll H\ll\kappa\,.
\end{equation}

The reason why we do not obtain \eqref{sr-eq-2D-thm} under the relaxed condition \eqref{sr-cond-H} is probably technical. The method is to construct test configurations with a Dirichlet boundary condition. We can not construct periodic configurations as in \cite{SS3} because the magnetic field $B_{0}$ is variable. The approach used in the proof of Theorem~\ref{thm-2D-main} is close to that in \cite{AK} which studies the same problem when $\Omega\subset\R^{3}$ and $B_{0}$ is constant.
\begin{rem}
Notice that when $H(\kappa)$ satisfies \eqref{old-condH} we have
$$\textit{o}(\kappa H)=\textit{o}\left(\kappa H\left(\Big|\ln\frac{H}{\kappa}\Big|+1\right)\right)\,.$$
\end{rem}

If we assume that there exist positive constants $C_{\rm min}$ and $C_{1}$ and $H(\kappa)$ satisfies
\begin{equation}
C_{\rm min} \kappa^{\frac{1}{3}}\leq H(\kappa)\leq C_{1} \kappa\,,
\end{equation}  
then \eqref{old-eq} and \eqref{eq-2D-thm} can be rewritten in a unique statement:
\begin{equation}\label{eq:1.14}
 \E0(\kappa,H)=\kappa^{2}\int_{\Omega}  \hat{f}\left(\frac{H}{\kappa}|B_{0}(x)|\right)\,dx+\textit{o}\left(\kappa H\left(\Big|\ln\frac{H}{\kappa}\Big|+1\right)\right)\,.
\end{equation}
\begin{rem}
When the set $\Gamma=\{x\in\overline{\Omega}, B_{0}(x)=0\}$ consists of a finite number of smooth curves and the intensity of the magnetic field $H$ satisfies $\kappa\ll H\leq \mathcal{O}(\kappa^{2})$, then the energy $\E0(\kappa,H)$ in \eqref{eq-2D-GLf} is estimated in \cite{HK}.
\end{rem}
Theorem~\ref{thm-2D-main} admits the following 
corollary which is useful in the proof of Theorem~\ref{th-loc-enrgy} below.
The content of Corollary~\ref{estimate-magnetic-energy} gives us that the magnetic energy is  small compared with the leading term in \eqref{eq:1.14}.

\begin{corol}\label{estimate-magnetic-energy}
Suppose that the assumptions of Theorem~\ref{thm-2D-main} hold. Then, the magnetic energy of the minimizer satisfies
\begin{equation}\label{eq-estimate-magnetic-energy}
(\kappa H)^{2}\int_{\Omega}|\curl\Ab-B_{0}|^{2}\,dx=\textit{o}\left(\kappa H \ln\frac{\kappa}{H}\right)\,,\qquad\text{as}~\kappa\longrightarrow+\infty\,.
\end{equation}
\end{corol}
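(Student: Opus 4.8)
The plan is to play the upper bound of Theorem~\ref{thm-2D-main} against the lower bound that is produced in the course of its proof. Fix a minimizer $(\psi,\Ab)$ of $\mathcal E_{\kappa,H}$, so that $\mathcal E_{\kappa,H}(\psi,\Ab)=\E0(\kappa,H)$, and recall the a priori bound $\|\psi\|_{L^\infty(\Omega)}\le1$. Writing the energy as the sum of the (nonnegative) kinetic--potential part and the magnetic part,
\[
\E0(\kappa,H)=\int_\Omega\Big(|(\nabla-i\kappa H\Ab)\psi|^2+\frac{\kappa^2}{2}(1-|\psi|^2)^2\Big)\md x+(\kappa H)^2\int_\Omega|\curl\Ab-B_0|^2\md x ,
\]
and using $\E0(\kappa,H)=O\big(\kappa H\ln\frac{\kappa}{H}\big)$ --- which follows from Theorem~\ref{thm-2D-main} together with the boundedness of $t\ln\frac1t$ on $[0,\|B_0\|_{\infty}]$ --- each of the two terms is $O\big(\kappa H\ln\frac{\kappa}{H}\big)$. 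In particular $\|\curl\Ab-B_0\|_{L^2(\Omega)}^2=O\big(\frac1{\kappa H}\ln\frac{\kappa}{H}\big)\to0$ by \eqref{cond-H}. This crude control is all we will need as input.

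Next I would revisit the lower-bound half of the proof of Theorem~\ref{thm-2D-main}. That argument localizes $\Omega$ into small squares whose side length is adjusted to $\kappa$ and $H$, freezes the induced field $\curl\Ab$ to a constant on each square, and compares the local energy with the reference problem defining $\hat f$; its conclusion can be kept in the form
\[
\mathcal E_{\kappa,H}(\psi,\Ab)\ \ge\ \kappa^2\int_\Omega\hat f\!\Big(\frac{H}{\kappa}|\curl\Ab(x)|\Big)\md x\ +\ (\kappa H)^2\int_\Omega|\curl\Ab-B_0|^2\md x\ -\ \textit{o}\!\Big(\kappa H\ln\frac{\kappa}{H}\Big),
\]
i.e.\ with the magnetic term simply retained rather than discarded. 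It then remains to replace $\curl\Ab$ by $B_0$ inside $\hat f$. By Theorem~\ref{pro-f(b)}, $\hat f(b)=\frac b2\ln\frac1b(1+\textit{o}(1))$ as $b\to0$, so $\hat f$ is Lipschitz on $[\delta,1]$ with constant $O\big(\ln\frac1\delta\big)$ for each $\delta\in(0,1)$, while the set $\{|B_0|<\delta\}$ has measure $O(\delta)$ by the non-degeneracy in \eqref{B(x)}. Splitting $\Omega$ according to whether $\frac{H}{\kappa}|B_0|$ is above or below a threshold, applying Cauchy--Schwarz together with the $L^2$-smallness of $\curl\Ab-B_0$ on the first part and the crude bound $\hat f\le\frac12$ on the second, and optimizing the threshold under \eqref{cond-H}, one obtains
\[
\kappa^2\int_\Omega\hat f\!\Big(\frac{H}{\kappa}|\curl\Ab|\Big)\md x\ =\ \kappa^2\int_\Omega\hat f\!\Big(\frac{H}{\kappa}|B_0|\Big)\md x\ -\ \textit{o}\!\Big(\kappa H\ln\frac{\kappa}{H}\Big).
\]

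Combining the last two displays with the upper bound $\E0(\kappa,H)\le\kappa^2\int_\Omega\hat f(\frac{H}{\kappa}|B_0|)\md x+\textit{o}(\kappa H\ln\frac{\kappa}{H})$ of Theorem~\ref{thm-2D-main}, the leading integrals cancel and one is left with $(\kappa H)^2\int_\Omega|\curl\Ab-B_0|^2\md x\le\textit{o}(\kappa H\ln\frac{\kappa}{H})$, which is \eqref{eq-estimate-magnetic-energy}. The main obstacle is the middle paragraph: one must make sure the lower bound of Theorem~\ref{thm-2D-main} can genuinely be written with the magnetic energy carried along as an additive nonnegative term, and that replacing $\curl\Ab$ by $B_0$ inside the non-Lipschitz function $\hat f$ --- which is delicate precisely near the vanishing curve $\Gamma$ --- costs only $\textit{o}(\kappa H\ln\frac{\kappa}{H})$; this is where the exact size of $\|\curl\Ab-B_0\|_{L^2}$, the logarithmic modulus of continuity of $\hat f$, and the linear vanishing of $B_0$ along $\Gamma$ have to be balanced against \eqref{cond-H}.
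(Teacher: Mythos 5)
Your overall strategy is the paper's: write $\mathcal E_{\kappa,H}(\psi,\Ab)=\mathcal E_0(\psi,\Ab;\Omega)+(\kappa H)^2\int_\Omega|\curl\Ab-B_0|^2\,dx$, bound the total from above by Theorem~\ref{thm-2D-main}, bound $\mathcal E_0(\psi,\Ab;\Omega)$ alone from below by the same leading term, and subtract. That is exactly how the paper proves Corollary~\ref{estimate-magnetic-energy}, using the lower bound of Remark~\ref{lower-bound-local-energy-in-D}, which states precisely $\mathcal E_0(\psi,\Ab;\Omega)\ge\kappa^2\int_\Omega\hat f(\frac H\kappa|B_0(x)|)\,dx+\textit{o}(\kappa H\ln\frac\kappa H)$. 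Note in particular that your worry about whether ``the magnetic energy can be carried along as an additive nonnegative term'' is moot: the lower bound is proved for $\mathcal E_0$ only, and the magnetic term is added back identically, so nothing needs to be carried through the localization.

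The genuine problem is your middle paragraph. You assert that the localization argument ``freezes the induced field $\curl\Ab$ to a constant on each square'' and yields a lower bound featuring $\hat f(\frac H\kappa|\curl\Ab(x)|)$. That is not what Section~\ref{section5} does, and it is not obtainable from the estimates actually available. The paper freezes the \emph{applied} field at a point $\widetilde x_0$ of each square, via \eqref{F-A}, and absorbs the discrepancy between $\Ab$ and $\Fb$ through the gauge $\phi_{x_0}$ and the $C^{0,\alpha}$ a priori bound of Proposition~\ref{pr-est} (see \eqref{alpha} and \eqref{es of A}); this is why $B_0$, not $\curl\Ab$, appears inside $\hat f$ from the outset. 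To freeze $\curl\Ab$ instead you would need pointwise control of $\curl\Ab$ on each square, but the available bound is $\|\Ab-\Fb\|_{H^2(\Omega)}\le C\lambda$, so $\curl(\Ab-\Fb)$ is only in $H^1(\Omega)$, which in two dimensions does not embed in $C^0$; the value $\curl\Ab(\widetilde x_0)$ is not even well defined without further elliptic regularity that you do not invoke. Consequently the display you propose as the ``conclusion kept in the form\dots'' is unproved, and the subsequent replacement of $\curl\Ab$ by $B_0$ inside $\hat f$ --- which you correctly identify as the delicate step --- is a repair of a difficulty you created. Dropping that paragraph and quoting the lower bound with $B_0$ already in place (Theorem~\ref{lw-Eg} restricted to $\mathcal E_0$, i.e.\ Remark~\ref{lower-bound-local-energy-in-D}) turns your argument into the paper's proof.
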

If $(\psi,\Ab)\in H^1(\Omega;\C)\times H^1_{\Div}(\Omega)$, we
introduce the energy density,
$$e(\psi,\Ab)=|(\nabla -i\kappa
H\Ab)\psi|^2+\frac{\kappa^2}2(1-|\psi|^{2})^2\,.$$ We also
introduce the local energy of $(\psi,\Ab)$ in
a domain $\overline{\mathcal{D}}\subset\Omega$ :
\begin{equation}\label{eq-GLe0}
\mathcal E_0(\psi,\Ab; \mathcal{D})=\int_{\mathcal{D}}e(\psi,\Ab)\,dx\,.
\end{equation}
Furthermore, we define the Ginzburg-Landau energy of $(\psi,\Ab)$ in
a domain $\overline{\mathcal{D}}\subset\Omega$ as follows,
\begin{equation}\label{eq-GLen-D}
\mathcal E(\psi,\Ab;\mathcal{D})=\mathcal E_0(u,\Ab; \mathcal{D})+(\kappa H)^2\int_{\Omega}|\curl(\Ab-\Fb)|^2\,dx\,.
\end{equation}
If $\mathcal{D}=\Omega$, we sometimes omit the dependence on the
domain and write $\mathcal E_0(\psi,\Ab)$ for $\mathcal
E_0(\psi,\Ab;\Omega)$.

The next theorem gives a local version of Theorem~\ref{thm-2D-main}.
 \begin{theorem}\label{th-loc-enrgy}
Under Assumption \eqref{B(x)}, if $(\psi,\Ab)$ is a minimizer of \eqref{eq-2D-GLf} and $\mathcal{D}$ is regular set such that $\overline{\mathcal{D}}\subset \Omega$, then the following is true.
\begin{enumerate}
\item If $H(\kappa)$ satisfies \eqref{cond-H}, then,
\begin{equation}\label{lw-loc-enrgy}
\mathcal{E}(\psi,\Ab, \mathcal{D})\geq \kappa^{2}\int_{\mathcal{D}}\hat{f}\left(\frac{H}{\kappa}|B_{0}(x)|\right)\,dx+\textit{o}\left( \kappa H \ln\frac{\kappa}{H}\right)\,,\quad\text{as}~\kappa\longrightarrow+\infty\,.
\end{equation}
\item If $H(\kappa)$ satisfies
\begin{equation}\label{cond-H2}
C^{1}_{\text{min}}\kappa^{\frac{3}{5}}\leq H\ll\kappa\qquad{\rm as}~\kappa\longrightarrow+\infty\,,
\end{equation}
where $C^{1}_{\text{min}}$ is a positive constant, then
\begin{equation}\label{up-loc-enrgy}
\mathcal{E}(\psi,\Ab, \mathcal{D})\leq \kappa^{2}\int_{\mathcal{D}}\hat{f}\left(\frac{H}{\kappa}|B_{0}(x)|\right)\,dx+\textit{o}\left( \kappa H \ln\frac{\kappa}{H}\right)\,,\quad\text{as}~\kappa\longrightarrow+\infty\,.
\end{equation}
\end{enumerate}
\end{theorem}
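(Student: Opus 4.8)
The plan is to localize the two-sided estimate of Theorem~\ref{thm-2D-main}. For the lower bound (1), the key observation is that the magnetic energy term is global and nonnegative, so it may be discarded or kept freely; the content is a lower bound on $\mathcal E_0(\psi,\Ab;\mathcal D)$. I would cover $\overline{\mathcal D}$ and its complement in $\Omega$ by a partition and compare $\mathcal E(\psi,\Ab,\Omega)=\mathcal E_0(\psi,\Ab;\mathcal D)+\mathcal E_0(\psi,\Ab;\Omega\setminus\mathcal D)+(\kappa H)^2\int_\Omega|\curl(\Ab-\Fb)|^2$ with the global asymptotics. Since $(\psi,\Ab)$ is a global minimizer, $\mathcal E(\psi,\Ab,\Omega)=\E0(\kappa,H)$, and Theorem~\ref{thm-2D-main} gives the value of the right-hand side. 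The point is then to bound $\mathcal E_0(\psi,\Ab;\Omega\setminus\mathcal D)$ \emph{from above} by $\kappa^2\int_{\Omega\setminus\mathcal D}\hat f(\tfrac H\kappa|B_0|)\,dx+o(\kappa H\ln\tfrac\kappa H)$; subtracting this from the global equality yields \eqref{lw-loc-enrgy}. Such an upper bound on the minimizer's local energy in the region $\Omega\setminus\mathcal D$ is exactly an instance of the upper bound part (2) applied with $\mathcal D$ replaced by $\Omega\setminus\mathcal D$ — but note the apparent circularity and the mismatch of hypotheses (\eqref{cond-H} versus the stronger \eqref{cond-H2}). I expect the resolution is that only a \emph{crude} upper bound on $\mathcal E_0(\psi,\Ab;\Omega\setminus\mathcal D)$ is needed here (e.g.\ $\le \E0(\kappa,H)$ restricted, or a direct energy estimate using that $|\psi|\le 1$ and standard elliptic estimates on $\Ab$), combined with the sharp global value; this avoids invoking (2) and keeps the hypothesis at \eqref{cond-H}.

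For the upper bound (2), the plan is the standard test-configuration strategy. I would build a trial pair $(\psi_{\rm tr},\Ab_{\rm tr})$ that \emph{agrees with the minimizer} $(\psi,\Ab)$ on $\Omega\setminus\mathcal D$ and, on $\mathcal D$, is replaced by a nearly optimal local configuration — the one realizing the density $\hat f(\tfrac H\kappa|B_0(x)|)$ pointwise via the reference problem defining $\hat f$ in \eqref{f(b)}. Concretely, on $\mathcal D$ one tiles by small squares of side $\ell=\ell(\kappa)$, freezes $B_0$ to its value at the center of each square, imports the minimizer of the constant-field model on that square (with a Dirichlet/gluing boundary layer), and pastes. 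The resulting configuration is admissible after correcting $\Ab_{\rm tr}$ to lie in $H^1_{\Div}$, and evaluating $\mathcal E(\psi_{\rm tr},\Ab_{\rm tr},\Omega)$ gives $\mathcal E(\psi,\Ab,\Omega\setminus\mathcal D)+\kappa^2\int_{\mathcal D}\hat f(\tfrac H\kappa|B_0|)\,dx+(\text{errors})$. Since the minimizer beats $\mathcal E(\psi_{\rm tr},\Ab_{\rm tr},\Omega)$, rearranging and using the global asymptotics of Theorem~\ref{thm-2D-main} to replace $\mathcal E(\psi,\Ab,\Omega)-\mathcal E(\psi,\Ab,\Omega\setminus\mathcal D)$ converts this into the desired bound on $\mathcal E(\psi,\Ab,\mathcal D)$.

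The main obstacle is controlling the gluing errors near $\partial\mathcal D$ and near the tiling interfaces, and it is precisely this that forces the stronger hypothesis \eqref{cond-H2}: the boundary layer between the imported Dirichlet configuration and the ambient minimizer costs an energy that scales like a perimeter term times a power of $\kappa/H$, and one needs $H\gg\kappa^{3/5}$ (rather than merely $\kappa^{1/3}$) for this to be $o(\kappa H\ln\tfrac\kappa H)$. A secondary technical point is that $B_0$ vanishes along $\Gamma$, so on squares meeting $\Gamma$ one cannot use the nondegenerate constant-field model; there $\hat f(\tfrac H\kappa|B_0|)$ is $O(\tfrac H\kappa|B_0|\ln\tfrac\kappa{H|B_0|})$ and the relevant test function is the local model near a vanishing magnetic field, whose contribution must be shown to be within the error — this uses \eqref{B(x)} to ensure $\Gamma$ is a finite union of smooth curves of finite length, so the $\Gamma$-neighbourhood has area $o(1)$ and its energy is lower order. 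Finally, the correction making $\Ab_{\rm tr}$ divergence-free and the passage from the frozen-field energy to the true $B_0$ energy are handled by Taylor expansion of $B_0$ on each square together with the a priori bounds $\|\psi\|_\infty\le 1$ and $\|\curl(\Ab-\Fb)\|_{2}$ small (Corollary~\ref{estimate-magnetic-energy}), which I would invoke to absorb the magnetic cross-terms into the error.
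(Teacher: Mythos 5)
Your part (1) has a genuine gap. The route you propose — writing $\mathcal E(\psi,\Ab;\mathcal D)=\E0(\kappa,H)-\mathcal E_0(\psi,\Ab;\Omega\setminus\mathcal D)$ and bounding the complement from above — cannot be repaired by a ``crude'' upper bound, because the quantity you must subtract, $\kappa^2\int_{\Omega\setminus\mathcal D}\hat f\big(\tfrac H\kappa|B_0|\big)\,dx$, is itself of order $\kappa H\ln\tfrac\kappa H$ by \eqref{main-f(b)}; any surplus of that order in the complement estimate wipes out the conclusion. You would need the complement bound sharp to within $o\big(\kappa H\ln\tfrac\kappa H\big)$, which is exactly statement (2) applied to $\Omega\setminus\mathcal D$ — circular, and carrying the stronger hypothesis \eqref{cond-H2}. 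Neither $\mathcal E_0(\psi,\Ab;\Omega\setminus\mathcal D)\le\E0(\kappa,H)$ nor $|\psi|\le1$ plus elliptic estimates (which give $O(\kappa^2)$) comes close. The paper instead proves the lower bound \emph{directly and locally}: on each $\rho$-admissible square $Q_\ell(x_0)$ it gauges $\Ab$ to $\sigma_\ell|B_0(\widetilde x_0)|\Ab_0(x-x_0)$ up to errors controlled by Proposition~\ref{pr-est} and \eqref{F-A}, rescales, and uses that the restriction of $\psi$ is admissible for the \emph{Neumann} reference problem, so that $F^{+1}_{b,Q_R}(v)\ge e_N(b,R)\ge e_D(b,R)-CRb^{1/2}\ge R^2\hat f(b)-CRb^{1/2}$ by \eqref{eD<ep} and \eqref{eD>f(b)}. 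Summing this lower Riemann sum over the squares of $\mathcal D_{\ell,\rho}$ and discarding the nonnegative remainder gives \eqref{lw-loc-enrgy} with no reference to the complement and no boundary-layer cost — that is precisely what passing through $e_N$ rather than $e_D$ buys. This local argument is absent from your proposal, and it is logically prior to Theorem~\ref{thm-2D-main}: the global lower bound is its special case $\mathcal D=\Omega$, so one cannot derive it from the global asymptotics.

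Your part (2) is structurally the paper's argument (local test-configuration comparison exploiting global minimality), except that the paper runs it square-by-square via Proposition~\ref{prop-ub} rather than gluing one trial state over all of $\mathcal D$, and it never needs a degenerate local model near $\Gamma$: it simply does not tile $\{|B_0|\le\rho\}$, and bounds the minimizer's energy in the uncovered strip $\mathcal D\setminus\mathcal D_{\ell,\rho}$ (near $\Gamma$ and near $\partial\mathcal D$) by $C(\kappa^2\ell+\kappa\ell^{-1/2})$ using Lemma~\ref{est-gamma}, i.e.\ by multiplying the first Ginzburg--Landau equation by $\chi_\ell\overline\psi$ and integrating by parts. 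You have misplaced the source of \eqref{cond-H2}: it is not a gluing cost of the test configuration but this strip estimate, since $\kappa^2\ell=\kappa^2(\kappa H)^{-1/4}=o\big(\kappa H\ln\tfrac\kappa H\big)$ requires $\kappa^3/H^5=O(1)$. Your assertion that the $\Gamma$-neighbourhood's energy ``is lower order'' is exactly the step that fails under \eqref{cond-H} alone and is the reason part (2) carries the stronger hypothesis; the magnetic term is handled as you say, via Corollary~\ref{estimate-magnetic-energy}.
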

As a consequence of the proof of Theorem~\ref{thm-2D-main}, the methods used in \cite{SS3} allow us to obtain  information regarding the distribution of vortices in $\Omega$. When the magnetic field is constant (i.e $B_{0}$ is a constant), it is proved in \cite{SS3} that $\psi$ has vortices whose density tends to be uniform.  In Section~\ref{section7} we will  prove that, if $(\psi,\Ab)$ is a minimizer of \eqref{eq-2D-GLf} and $B_{0}(x)$ is a variable magnetic field, then, $\psi$ has vortices that are distributed everywhere in $\Omega$ but with a non uniform density.

The next theorem was proved by E. Sandier and S. Serfaty in \cite{SS3} when the magnetic field is constant $(B_{0}(x)=1).$
\begin{thm}\label{distribution-vortices}
Suppose that Assumption \eqref{B(x)} holds and that $H(\kappa)$ satisfies \eqref{cond-H}. Let $(\psi,\Ab)$ be a minimizer of \eqref{eq-2D-GLf}. Then there exists $m=m(\kappa)$  disjoint disks $\left(D_{i}(a_{i},r_{i})\right)_{i=1}^{m}$  in $\Omega$ such that, as $\kappa\longrightarrow+\infty\,$,
\begin{enumerate}
\item \label{p1} $\sum_{i=1}^m  r_{i}\leq (\kappa H)^{\frac{1}{2}}\,\left(\ln\frac{\kappa}{H}\right)^{-\frac{7}{4}}\,\int_{\Omega}\frac{1}{\sqrt{|B_{0}(x)|}}\,dx\,(1+\textit{o}(1))\,.$
\item $|\psi|\geq\frac{1}{2}$ on $\cup_{i}\partial D_{i}$\,.
\item \label{p2} If $d_{i}={\rm deg}\left(\frac{\psi}{|\psi|}, \partial D_{i}\right)$ is the winding number of $\frac{\psi}{|\psi|}$ on $\partial D_{i}$, then as $\kappa\longrightarrow+\infty$
\end{enumerate}
$$\mu_{\kappa}=\frac{2\pi}{\kappa H}\sum_{i=1}^{m}d_{i}\delta_{a_{i}}\longrightarrow B_{0}(x)\,dx\qquad{\rm and}\qquad|\mu_{\kappa}|=\frac{2\pi}{\kappa H}\sum_{i=1}^{m}|d_{i}|\delta_{a_{i}}\longrightarrow |B_{0}(x)|\,dx\,,$$
in the weak sense of measures \footnote{$\mu_{\kappa}$ converge weakly to $\mu$ means that:~\\
$$
\mu_{\kappa}(f)\longrightarrow\mu(f)\,,\qquad\forall f\in C_{0}(\Omega)\,.
$$
}, where $dx$ is the Lebesgue measure on $\R^{2}$ restricted to $\Omega\,$.
\end{thm}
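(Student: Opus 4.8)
The plan is to adapt the scheme of Sandier--Serfaty \cite{SS3}, feeding in the energy asymptotics of Theorem~\ref{thm-2D-main} and Corollary~\ref{estimate-magnetic-energy} and carrying the non-constant weight $|B_0|$ through the vortex-ball construction and the Jacobian estimate. Fix a minimizer $(\psi,\Ab)$. I would first record the a~priori facts used throughout: $\|\psi\|_{L^\infty(\Omega)}\le1$ by the maximum principle; the standard estimate $\|(\nabla-i\kappa H\Ab)\psi\|_{L^\infty(\Omega)}\le C\kappa$, from which $\|\Ab-\Fb\|_{W^{2,p}}\le C/H$ and $\|\Ab\|_{L^\infty}\le C$ follow by elliptic regularity in \eqref{eq-2D-GLeq}; and, by Theorem~\ref{thm-2D-main} and Corollary~\ref{estimate-magnetic-energy},
\[
\mathcal E_0(\psi,\Ab)\le C\kappa H\ln\tfrac{\kappa}{H}\,,\qquad (\kappa H)^2\!\int_\Omega|\curl\Ab-B_0|^2\,dx=\textit{o}\!\Big(\kappa H\ln\tfrac{\kappa}{H}\Big)\,.
\]
Since $\kappa H\gg\ln(\kappa/H)$ under \eqref{cond-H}, the second bound gives $\curl\Ab\to B_0$ in $L^2(\Omega)$. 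Running the magnetic vortex-ball construction (as in \cite{SS3}) on $\{|\psi|\le\tfrac12\}$, with a final total radius $r$ to be fixed, produces the finite family of disjoint disks $D_i(a_i,r_i)$, with $|\psi|\ge\tfrac12$ on $\bigcup_i\partial D_i$ and
\[
\tfrac12\!\int_{\bigcup_i D_i}\!\Big(|(\nabla-i\kappa H\Ab)\psi|^2+\tfrac{\kappa^2}{2}(1-|\psi|^2)^2\Big)dx\ \ge\ \pi\sum_i|d_i|\Big(\ln\tfrac{r}{r_i}-C\Big)\,,\qquad d_i={\rm deg}\Big(\tfrac{\psi}{|\psi|},\partial D_i\Big)\,.
\]
Using the energy bound to control $\sum_i|d_i|$ and optimizing $r$ against it yields $\sum_i|d_i|\le C\kappa H$, hence $|\mu_\kappa|(\Omega)=\tfrac{2\pi}{\kappa H}\sum_i|d_i|\le C$, together with the explicit bound \eqref{p1} for $\sum_i r_i$. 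This settles \eqref{p1} and the inequality $|\psi|\ge\tfrac12$ on $\bigcup_i\partial D_i$.

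Since $\sum_i r_i$ is small, the Jacobian estimate (see \cite{SS3}) gives
\[
\Big\|\,2\pi\sum_i d_i\,\delta_{a_i}-\curl\big(\IM(\overline\psi\,\nabla\psi)\big)\,\Big\|_{\left(C^{0,1}(\overline\Omega)\right)^*}\longrightarrow 0\,.
\]
Write $\IM(\overline\psi\,\nabla\psi)=j_\psi+\kappa H|\psi|^2\Ab$ with $j_\psi:=\IM\big(\overline\psi(\nabla-i\kappa H\Ab)\psi\big)$. For a Lipschitz $f$, the bound $\|j_\psi\|_{L^1}\le C\,\mathcal E_0(\psi,\Ab)^{1/2}$ gives $\frac1{\kappa H}\int_\Omega f\,\curl j_\psi\,dx=O\big((\kappa^{-1}H^{-1}\ln\tfrac\kappa H)^{1/2}\big)\to0$; and an integration by parts together with $\|1-|\psi|^2\|_{L^1}\le C(\tfrac H\kappa\ln\tfrac\kappa H)^{1/2}\to0$ (which holds since $s\ln\tfrac1s\to0$ as $s=H/\kappa\to0$) gives $\frac1{\kappa H}\int_\Omega f\,\curl\big(\kappa H|\psi|^2\Ab\big)\,dx-\int_\Omega f\,\curl\Ab\,dx\to0$. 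Combined with $\curl\Ab\to B_0$ in $L^2(\Omega)$ and a density argument based on $|\mu_\kappa|(\Omega)\le C$, one obtains
\[
\mu_\kappa=\frac{2\pi}{\kappa H}\sum_i d_i\,\delta_{a_i}\longrightarrow B_0(x)\,dx
\]
in the weak sense of measures, and, by lower semicontinuity of the total variation, $\liminf_\kappa|\mu_\kappa|(\Omega)\ge\int_\Omega|B_0|\,dx$.

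It remains to prove the matching bound $\limsup_\kappa|\mu_\kappa|(\Omega)\le\int_\Omega|B_0|\,dx$. The lower bound established in the course of proving Theorem~\ref{thm-2D-main} can be refined (it proceeds by covering $\overline\Omega$ with a mesh of small squares on which $B_0$ is essentially constant and summing vortex-ball and Jerrard-type lower bounds) to the form
\[
\mathcal E_0(\psi,\Ab)\ \ge\ \tfrac12\,\kappa H\ln\tfrac{\kappa}{H}\,|\mu_\kappa|(\Omega)\,\big(1-\textit{o}(1)\big)\,.
\]
Comparing with the global upper bound for the minimizer,
\[
\mathcal E_0(\psi,\Ab)\ \le\ \kappa^2\!\int_\Omega\hat f\Big(\tfrac H\kappa|B_0|\Big)dx+\textit{o}\big(\kappa H\ln\tfrac\kappa H\big)\ =\ \tfrac12\,\kappa H\ln\tfrac{\kappa}{H}\!\int_\Omega|B_0|\,dx\,\big(1+\textit{o}(1)\big)\,,
\]
which follows from Theorem~\ref{thm-2D-main}, Corollary~\ref{estimate-magnetic-energy} and $\hat f(b)=\tfrac b2\ln\tfrac1b(1+\textit{o}(1))$ as $b\to0$, one gets $\limsup_\kappa|\mu_\kappa|(\Omega)\le\int_\Omega|B_0|\,dx$. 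Hence $|\mu_\kappa|(\Omega)\to\int_\Omega|B_0|\,dx$, and since $\mu_\kappa\to B_0\,dx$, convergence of the total masses upgrades this to $|\mu_\kappa|\to|B_0|\,dx$ in the weak sense of measures, completing item~\eqref{p2}.

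I expect the main obstacle to be the refined lower bound $\mathcal E_0(\psi,\Ab)\ge\tfrac12\kappa H\ln\tfrac\kappa H\,|\mu_\kappa|(\Omega)(1-\textit{o}(1))$: one must run the vortex-ball and Jerrard-type lower bounds on a mesh of squares of side $\ell=\ell(\kappa)\to0$—so as to freeze $B_0$ at its value on each square, noting that $\ln\tfrac{\kappa}{H|B_0|}=\ln\tfrac\kappa H(1+\textit{o}(1))$ away from $\Gamma$ while the squares near $\Gamma$ contribute only a lower-order term—and then sum these estimates with remainders that stay $\textit{o}(\kappa H\ln\tfrac\kappa H)$ after the $\sim\ell^{-2}$ squares are added up, which requires uniform control of the remainders and a sufficiently slow choice of $\ell(\kappa)$, exactly as for the constant field in \cite{SS3}. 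The remaining steps (the a~priori bounds, the vortex balls, the Jacobian estimate and the measure-theoretic passages) are routine adaptations of \cite{SS3}.
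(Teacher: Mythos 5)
There is a genuine gap, and it sits exactly where you place your ``main obstacle'': the per-vortex logarithmic lower bound with the correct constant $\tfrac12\ln\tfrac{\kappa}{H}$, which is what you need both for the upper bound on $|\mu_\kappa|(\Omega)$ and, implicitly, for the Jacobian step. Your stated strategy --- a single global vortex-ball construction on $\{|\psi|\le\tfrac12\}$ followed by a global Jacobian estimate --- cannot work in the regime \eqref{cond-H}. The expected total degree is $\sum_i|d_i|\approx\kappa H\to\infty$, so the global ball-construction lower bound $\pi\sum_i|d_i|\big(\ln\tfrac{r}{\varepsilon\sum_i|d_i|}-C\big)$ (with $\varepsilon=1/\kappa$) has logarithm $\ln\tfrac{r\kappa}{\kappa H}=\ln\tfrac{r}{H}$, which is negative for any admissible total radius $r$ once $H\to\infty$; it therefore gives no control on $\sum_i|d_i|$ and certainly not the constant $\tfrac12\ln\tfrac{\kappa}{H}$. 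Conversely, the disks the theorem actually produces have $\sum_i r_i\approx(\kappa H)^{1/2}(\ln\tfrac{\kappa}{H})^{-7/4}\to\infty$, so the global Jacobian estimate, whose error is of order $(\sum_i r_i)\cdot\mathcal E_0(\psi,\Ab)/(\kappa H)\approx(\kappa H)^{1/2}(\ln\tfrac{\kappa}{H})^{-3/4}$, is also vacuous. You cannot choose a single family of balls for which both tools are simultaneously meaningful; this is precisely why the argument must be local from the start.

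The paper's proof is entirely local: each $\rho$-admissible square of side $\ell=(\kappa H)^{-1/4}$ is subdivided into boxes of side $\delta(\kappa)\sim(\kappa H)^{-1/2}(\ln\tfrac{\kappa}{H})^{7/8}$, calibrated so that the expected number of vortices per box, $h_{\rm ex}\delta(\kappa)^2\approx(\ln\tfrac{\kappa}{H})^{7/4}$, satisfies the Sandier--Serfaty window \eqref{cond-hex}; then \cite[Proposition~5.1]{SS3} (Proposition~\ref{serfaty}) is applied box by box, yielding directly the two-sided degree estimates $2\pi\sum d_i\gtrless h_{\rm ex}\delta(\kappa)^2(1\mp o(1))$ and the radius bound $h_{\rm ex}^{-1/2}$ per box, after which everything is summed via Riemann sums of $B_0$ and $|B_0|^{-1/2}$. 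The nontrivial input you do not supply is the hypothesis \eqref{cond-hex2} of that local result: a matching local energy upper bound for the \emph{minimizer} on each small box, with a relative error $o(1)$. In the paper this requires the a priori estimates on $\Ab-\Fb$, the two-sided local estimates of Propositions~\ref{prop-lb}, \ref{prop-ub} and \ref{prop-ub2}, and the nice/bad box dichotomy of Lemma~\ref{N'=o(N)} showing that the boxes where the local bound fails are negligible in number. None of this is a routine adaptation, and your global energy comparison $\mathcal E_0\le\tfrac12\kappa H\ln\tfrac{\kappa}{H}\int_\Omega|B_0|(1+o(1))$ does not substitute for it: without the local lower bound with the sharp constant on \emph{every} box, an energy excess on a sparse set of boxes cannot be ruled out, and the degree upper bound (hence $|\mu_\kappa|\to|B_0|\,dx$) does not follow. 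You have correctly identified where the difficulty lies, but the proof of that step is the theorem.
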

The measure $\mu$ describes the distribution of vortices see Fig.1, and it is called the \textit{vorticity measure}, the function $\textit{o}(1)$ is bounded independently of the choice of the minimizer $(\psi,\Ab)$.
\subsection* {Notation.}
{Throughout the paper, we use the following notation:}
\begin{itemize}
\item If $a(\kappa)$ and $b(\kappa)$ are two positive functions, we write $a(\kappa)\ll b(\kappa)$ if $a(\kappa)/b(\kappa)\to0$ as $\kappa\to\infty$.
\item If $a(\kappa)$ and $b(\kappa)$ are two functions with $b(\kappa)\not=0$, we write $a(\kappa)\sim b(\kappa)$ if $a(\kappa)/b(\kappa)\to1$ as $\kappa\to\infty$.
\item If $a(\kappa)$ and $b(\kappa)$ are two positive functions, we write $a(\kappa)\approx b(\kappa)$
if there exist positive constants $c_1$, $c_2$ and $\kappa_0$ such
that $c_1b(\kappa)\leq a(\kappa)\leq c_2b(\kappa)$ for all
$\kappa\geq\kappa_0$.
\item Given $R>0$ and $x=(x_{1},x_{2})\in\R^{2}$, $Q_{R}(x)=(-R/2+x_{1},R/2+x_{1})\times(-R/2+x_{2},R/2+x_{2})$ denotes the
square of side length $R$ centered at $x$ and we write $Q_{R}=Q_{R}(0)$.
\end{itemize}

\section{A reference problem}
Consider two constants $b\in (0,1)$ and $R>0$. If $u\in H^{1}(Q_{R})$, we define the following Ginzburg-Landau energy,
\begin{equation}\label{eq-GL-F}
F^{\sigma}_{b,Q_{R}}(u)=\int_{Q_{R}}\left(b|(\nabla-i\sigma\Ab_0)u|^2+\frac{1}{2}\left(1-|u|^2\right)^{2}\right)\,dx\,,
\end{equation}
where $\sigma\in\{-1,+1\}$ and
\begin{equation}\label{eq-hc2-mpA0}
\Ab_{0}(x)=\frac1{2}(-x_2,x_1)\,,\qquad\forall\,x=(x_1,x_2)\in\R^{2}\,.
\end{equation}
Notice that the magnetic potential $\Ab_0$ satisfies:
$$\curl\Ab_0=1\,\,{\rm~in~}\R^{2}\,.$$
We introduce the two ground state energies
\begin{eqnarray}
e_{N}(b,R)=\inf\left\{F^{+1}_{b,Q_{R}}(u): u\in H^{1}(Q_{R};\C)\right\}\label{eN}\\
e_{D}(b,R)=\inf\left\{F^{+1}_{b,Q_{R}}(u): u\in H^{1}_{0}(Q_{R};\C)\right\}\label{eD}\,.
\end{eqnarray}
The minimization of the functional $F^{+1}_{b,Q_{R}}$ over `\textit{magnetic periodic}' functions appears naturally in the proof. Let us introduce the following space
\begin{equation}\label{ER}
E_{R}=\left\{u\in H^{1}_{\rm loc}(\R^{2};\C): u(x_{1}+R,x_{2})=e^{iR\frac{x_{2}}{2}}u(x_1,x_2), u(x_{1},x_{2}+R)=e^{-iR\frac{x_{1}}{2}}u(x_1,x_2)\right\}\,,
\end{equation}
together with the ground state energy
\begin{equation}\label{ep}
e_{p}(b,R)=\inf\left\{F^{+1}_{b,Q_{R}}(u): u\in E_{R}\right\}\,.
\end{equation}
Since $F^{+1}_{b,Q_{R}}$ is bounded from below, there exists for each $e_{\#}(b,R)$ with $\#\in\{N, D, p\}$, a ground state (minimizer). Note also that by comparison of the three domains of minimization it is clear that 
\begin{equation}\label{eN<ep<eD}
e_{N}(b,R)\leq e_{p}(b,R)\leq e_{D}(b,R)\,.
\end{equation}

In the three cases, if $u$ is such a ground state, $u$ satisfies the Ginzburg-Landau equation 
$$b(\nabla-i\Ab_{0})^{2}u=(1-|u|^{2})u\,,$$
 and it results from a standard application of the maximum principle that
\begin{equation}\label{up-u-1}
|u|\leq 1\,.
\end{equation} 
As $F^{+1}_{b,Q_{R}}(u)=F^{-1}_{b,Q_{R}}(\overline{u})$, it is also immediate that,
\begin{equation}\label{nE=pE}
\inf_{u\in H^{1}(Q_{R};\C)}F^{+1}_{b,Q_{R}}(u)=\inf_{u\in H^{1}(Q_{R};\C)}F^{-1}_{b,Q_{R}}(u)\,.
\end{equation}

In the next theorem we will define the limiting function $\hat{f}$, which describes the ground state energy of both two and three dimensional superconductors subject to high magnetic fields (see \cite{FK2}).
\begin{theorem}\label{pro-f(b)}
Let $e_{p}(b,R)$ be as introduced in \eqref{ep}.
\begin{enumerate}
\item For any $b\in [0,\infty)$, there exists a constant $\hat{f}(b)\geq 0$ such that
\begin{equation}\label{f(b)}
\hat{f}(b)=\lim_{R\longrightarrow\infty}\frac{e_{p}(b,R)}{|Q_{R}|}=\lim_{R\longrightarrow\infty}\frac{e_{D}(b,R)}{|Q_{R}|}\,.
\end{equation}
\item For all $b\geq 1$, $\hat{f}(b)=\frac{1}{2}$.
\item The function $[0,\infty)\ni b\longmapsto \hat{f}(b)$ is continuous, non-decreasing and its range is the interval $[0,1/2]$.
\item As $b\longrightarrow 0_{+}$, $\hat{f}(b)$ satisfies
\begin{equation}\label{main-f(b)}
\hat{f}(b)=\frac{b}{2}\ln\frac{1}{b}\,(1+\hat{s}(b))\,,
\end{equation}
where the function $\hat{s}:(0,+\infty)\longmapsto(-\infty,+\infty)$ satisfies
$$
\lim_{b\longrightarrow 0} \hat{s}(b)\longrightarrow 0\,.
$$
\item There exist universal constants $C$ and $R_{0}$ such that
\begin{equation}\label{est-f(b)}
\forall R\geq R_{0},\qquad\forall b\in[0,1],\qquad\left|\hat{f}(b)-\frac{e_{p}(b,R)}{R^{2}}\right|\leq\frac{C}{R}.
\end{equation}
\item\label{p6} There exist positive constants $b_{0}$, $R_{0}$ and a function 
\begin{equation}\label{err}
{\rm err}: (0,1)\times(0,+\infty)\longrightarrow(0,+\infty)\,,
\end{equation}
such that
\begin{equation}
\forall \epsilon\geq0\,,\exists \eta\geq0~{\rm if}~|b|+\frac{1}{R}<\eta~{\rm then}~|{\rm }err(b,R)|\leq \epsilon\,,
\end{equation}
and
\begin{equation}\label{eN<f}
\forall b\in(0,b_{0})\,,\qquad\forall R\in(R_{0},+\infty)
\,,\qquad \frac{e_{N}(b,R)}{R^{2}}\geq\hat{f}(b)(1-{\rm err}(b,R))\,.\end{equation}
\end{enumerate}
\end{theorem}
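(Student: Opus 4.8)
The plan is to prove Theorem~\ref{pro-f(b)} in three blocks: (a) existence of the thermodynamic limit and the qualitative properties (1)--(3), together with the rate (5); (b) the value $\hat f(b)=\tfrac12$ for $b\ge 1$; (c) the dilute asymptotics (4) and the refined Neumann lower bound (6). For (a)--(b) one follows the thermodynamic-limit arguments of \cite{FK2}; for (c) one follows the vortex analysis of Sandier--Serfaty \cite{SS3}, adapted to the magnetic cell problem as in \cite{AK}.

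\emph{Block (a).} The structural fact driving everything is that $F^{\sigma}_{b,Q_R}$ is invariant, up to a gauge transformation, under magnetic translations: since $\Ab_0(x+a)=\Ab_0(x)+\Ab_0(a)$ and the constant vector $\Ab_0(a)$ is the gradient of $x\mapsto\Ab_0(a)\cdot x$, replacing $u$ by $e^{i\sigma\Ab_0(a)\cdot x}\,u(\cdot-a)$ leaves the functional unchanged. Consequently, tiling $Q_{NR}$ by $N^2$ magnetically translated copies of a Dirichlet minimizer on $Q_R$ and gluing them (the glued function lies in $H^1_0(Q_{NR})$ since a Dirichlet minimizer vanishes on $\partial Q_R$), and using the inclusion $E_R\subset E_{NR}$ (which follows by iterating the relations in \eqref{ER}) for the periodic class, one obtains the submultiplicativity $e_\#(b,NR)\le N^2 e_\#(b,R)$ for $\#\in\{D,p\}$. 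A Fekete-type argument then gives $e_D(b,R)/R^2\to\hat f(b):=\inf_{\rho>0}e_D(b,\rho)/\rho^2$ (fill an arbitrary $Q_R$ by a maximal array of $Q_{R_0}$-cells and put $u\equiv 0$ on the leftover frame, of area $<2RR_0$, at cost $<RR_0$); submultiplicativity forces $e_p(b,R)/R^2\ge\hat f(b)$, hence with $e_p\le e_D$ from \eqref{eN<ep<eD} also $e_p(b,R)/R^2\to\hat f(b)$; and the chain is closed by the cut-off estimate $e_D(b,R)\le e_N(b,R)+\textit{o}(R^2)$ (multiply a Neumann minimizer by a cut-off supported in a boundary annulus of width $w$ with $1\ll w\ll\sqrt R$, chosen by a mean-value selection so the minimizer carries energy $\textit{o}(R^2)$ there, and use $|u|\le1$ from \eqref{up-u-1}). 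This establishes \eqref{f(b)}. Monotonicity of $\hat f$ is immediate because $F^{\sigma}_{b,Q_R}(u)$ is non-decreasing in $b$; each $e_\#(b,R)$, being an infimum of functions affine in $b$, is concave and hence continuous on $(0,\infty)$, and concavity and continuity pass to the limit $\hat f$; since $\hat f(0)=0$ (take $u$ equal to $1$ away from a thin boundary layer) and $\hat f(b)\to 0$ as $b\to0^+$ by (4), $\hat f$ is continuous at $0$ too; the competitor $u\equiv0$ gives $\hat f\le\tfrac12$, so with (b) and the intermediate value theorem the range is $[0,\tfrac12]$. The uniform bound $|\hat f(b)-e_p(b,R)/R^2|\le C/R$ of \eqref{est-f(b)} follows from a quantitative version of these comparisons, using $|u|\le1$ to control the boundary corrections; this is the one point where constants, not just orders, must be tracked.

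\emph{Block (b).} For $b\ge 1$ and $u\in H^1_0(Q_R)$ one estimates
$$
F^{\sigma}_{b,Q_R}(u)\ \ge\ \int_{Q_R}\Big(|(\nabla-i\sigma\Ab_0)u|^2+\tfrac12(1-|u|^2)^2\Big)\ \ge\ \int_{Q_R}\Big(|u|^2+\tfrac12(1-|u|^2)^2\Big)\ \ge\ \tfrac12\,|Q_R|,
$$
using in turn $b\ge1$, the sharp lower bound $(-i\nabla-\sigma\Ab_0)^2\ge|\curl\Ab_0|=1$ for the constant-field magnetic Laplacian on $H^1_0(Q_R)$, and the pointwise identity $t+\tfrac12(1-t)^2=\tfrac12+\tfrac12 t^2\ge\tfrac12$ with $t=|u|^2$. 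Thus $e_D(b,R)\ge\tfrac12 R^2$, so $\hat f(b)\ge\tfrac12$; combined with $\hat f\le\tfrac12$ this gives $\hat f(b)=\tfrac12$.

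\emph{Block (c).} For the upper bound in \eqref{main-f(b)}, since $\hat f(b)=\inf_\rho e_D(b,\rho)/\rho^2$ it suffices to produce, for each small $b$, a scale $R(b)\to\infty$ and a competitor on $Q_{R(b)}$ of energy $\tfrac b2\ln\tfrac1b\,R(b)^2(1+\textit{o}(1))$: one takes a periodic array of unit-degree vortices, one per cell of area $2\pi$, each with a standard Ginzburg--Landau profile of core size $\sim\sqrt b$ (the coherence length of $F^{\sigma}_{b,Q_R}$) and a phase making $\nabla\varphi-\sigma\Ab_0$ as small as the neutrality constraint allows, then cuts off in a thin frame near $\partial Q_{R(b)}$; a direct computation (kinetic cost $\sim 2\pi b\ln\tfrac1{\sqrt b}=\pi b\ln\tfrac1b$ per vortex, $\sim R^2/(2\pi)$ vortices) gives $\hat f(b)\le\tfrac b2\ln\tfrac1b(1+\textit{o}(1))$. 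For the matching lower bound --- and for \eqref{eN<f} --- one applies to an arbitrary $u$ with $|u|\le1$ the vortex-ball construction and Jacobian estimate of Jerrard--Sandier and Sandier--Serfaty: it produces disjoint balls covering $\{|u|\le\tfrac12\}$ on which $b\int|(\nabla-i\sigma\Ab_0)u|^2$ is bounded below through the degrees $d_i$, while the residual kinetic energy penalizes any deficit of the total degree below $|Q_R|/(2\pi)$ (which screens the imposed field $\curl(\sigma\Ab_0)=\sigma$), through a negative Sobolev norm of $2\pi\sum_i d_i\delta_{a_i}-\sigma$ on $Q_R$. A mean-field/duality argument --- with flux quantization already fixing the total degree in the periodic class --- then shows that in all cases $e_N(b,R)\ge\tfrac b2\ln\tfrac1b\,R^2\big(1-\mathrm{err}(b,R)\big)$ with $\mathrm{err}(b,R)\to0$ as $|b|+1/R\to0$; together with the upper bound and block (a) this gives \eqref{main-f(b)} and \eqref{eN<f}. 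The hard part is exactly this last step: running the vortex-ball and mean-field lower bounds with constants uniform in $b$ small and $R$ large, and robust to vortices escaping through $\partial Q_R$ in the Neumann case, so as to extract an error function $\mathrm{err}(b,R)$ with the stated smallness at the origin. Here one follows \cite{SS3} and \cite{AK} closely.
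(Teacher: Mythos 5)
The paper does not actually prove Theorem~\ref{pro-f(b)}: it imports \eqref{main-f(b)} and \eqref{est-f(b)} from \cite[Theorem~2.1 and Proposition~2.8]{FK2}, the lower bound \eqref{eN<f} from \cite[Theorem~2.1 and (2.9)]{AK}, and the existence of $\hat{f}$ from \cite{AS}, \cite{SS2}, \cite{AK}; the only pieces it reproves are the comparisons $e_{N}\leq e_{p}\leq e_{D}\leq e_{N}+CRb^{\frac{1}{2}}$ and Proposition~\ref{FR}. Your outline therefore reconstructs the arguments of those references rather than following anything in the paper. Blocks (a) and (b) are correct and complete at the level of a sketch: magnetic translation invariance plus Dirichlet tiling gives $e_{D}(b,NR)\leq N^{2}e_{D}(b,R)$, the Fekete/filling argument gives the thermodynamic limit, concavity in $b$ of an infimum of affine functions gives continuity, and the constant-field ground-state bound $\int|(\nabla-i\sigma\Ab_{0})u|^{2}\geq\int|u|^{2}$ on $H^{1}_{0}(Q_{R})$ together with $t+\tfrac12(1-t)^{2}\geq\tfrac12$ correctly yields $\hat{f}(b)=\tfrac12$ for $b\geq1$; this matches how Proposition~\ref{FR} and the proof of \eqref{eD<ep} are run in the paper. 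Two caveats. First, the space $E_{R}$ of \eqref{ER} contains nonzero functions (and satisfies $E_{R}\subset E_{NR}$) only when the flux $R^{2}$ is an integer multiple of $2\pi$, so the periodic limit in \eqref{f(b)} must be taken along such $R$; the paper is equally silent on this. Second, and more substantially, block (c) is only a roadmap: the vortex-lattice upper bound is routine, but the matching lower bound --- uniform in small $b$ and large $R$, with a quantified error ${\rm err}(b,R)$, and robust for Neumann conditions where vortices may exit through $\partial Q_{R}$ --- is precisely the hard content of \cite[Theorem~2.1]{FK2} and \cite[Theorem~2.1]{AK}, and your sketch defers to those works rather than proving it. Since the paper does exactly the same, this is not a gap relative to the paper, but items (4)--(6) are not established by the proposal alone.
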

The limiting function $\hat{f}$ was defined in (\cite{AS}, \cite{SS2}, \cite{AK}). The estimate in \eqref{main-f(b)} and \eqref{est-f(b)} are obtained by Fournais-Kachmar (see \cite[Theorem~2.1~and~Proposition~2.8]{FK2}) and by Kachmar (see \cite[Theorem~2.4]{AK}) respectively. The lower bound in \eqref{eN<f} is a consequence of \cite[Theorem~2.1 and (2.9)]{AK}.

We need the next proposition in the proof of the lower bound of $\mathcal{E}_{\kappa,H}(\psi,\Ab)$.
\begin{prop}
There exists a positive constant $C$, such that if 
\begin{equation}\label{cond-b&R}
R\geq 1\qquad{\rm and}\qquad 0<b< 1\,,
\end{equation}
then,
\begin{eqnarray}
e_{D}(b,R)\leq e_{N}(b,R)+CR\,b^{\frac{1}{2}}\label{eD<ep}\,.
\end{eqnarray}
\end{prop}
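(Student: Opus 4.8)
The plan is to bound $e_D(b,R)$ from above by modifying a Neumann ground state $u$ of $F^{+1}_{b,Q_R}$ so that it vanishes on $\partial Q_R$, at a controlled energy cost. First I would fix a minimizer $u$ of \eqref{eN} and recall from \eqref{up-u-1} that $|u|\le 1$. Introduce a cutoff function $\chi_R\in C_c^\infty(Q_R)$ with $0\le\chi_R\le 1$, $\chi_R\equiv 1$ on $Q_{R-\ell}$, and $|\nabla\chi_R|\le C/\ell$, where $\ell=\ell(b,R)\in(0,R/2)$ is a boundary-layer width to be optimized. Then $v:=\chi_R u\in H^1_0(Q_R;\C)$ is an admissible test function for $e_D(b,R)$, so $e_D(b,R)\le F^{+1}_{b,Q_R}(\chi_R u)$, and the task reduces to estimating the difference $F^{+1}_{b,Q_R}(\chi_R u)-F^{+1}_{b,Q_R}(u)$.

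Next I would expand the three terms of the functional on the strip $S_\ell:=Q_R\setminus Q_{R-\ell}$, the only region where $\chi_R u$ differs from $u$. For the potential term, since $|u|\le 1$ one has $|1-|\chi_R u|^2|\le 1$ pointwise, so that contribution is $O(|S_\ell|)=O(R\ell)$. For the magnetic kinetic term, writing $(\nabla-i\sigma\Ab_0)(\chi_R u)=\chi_R(\nabla-i\sigma\Ab_0)u+(\nabla\chi_R)u$ and using $|a+b|^2\le 2|a|^2+2|b|^2$, one gets a contribution bounded by $2\int_{S_\ell}|(\nabla-i\sigma\Ab_0)u|^2 + C\ell^{-2}\int_{S_\ell}|u|^2 \le 2\int_{S_\ell}|(\nabla-i\sigma\Ab_0)u|^2 + C R\ell^{-1}$. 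The term $\int_{S_\ell}|(\nabla-i\sigma\Ab_0)u|^2$ is itself bounded by the full Neumann energy $e_N(b,R)$ up to the factor $b$, hence by $O(b^{-1}e_N(b,R))$; but a cleaner route is to average over a family of nested strips (a "good slicing" argument): for each $t\in(0,\ell)$ let $S_t'$ be the strip of width $\ell$ starting at distance $t$ from the boundary, and choose the translate making $\int_{S_t'}\big(b|(\nabla-i\sigma\Ab_0)u|^2 + (1-|u|^2)^2/2\big)\,dx$ minimal; this minimal value is at most $(\ell/R)$ times the total, i.e.\ $\lesssim (\ell/R)\,e_N(b,R)$. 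Since $e_N(b,R)\le |Q_R|\hat f(b)\le \tfrac12 R^2$ by monotonicity and $\hat f\le 1/2$, this is $O(R\ell)$.

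Collecting the estimates gives $e_D(b,R)\le e_N(b,R)+C\big(R\ell + R\ell^{-1}b\big)$ (the $b$ appearing because the $\ell^{-2}|u|^2$ term is multiplied by $b$ inside $F^{+1}_{b,Q_R}$, so it is $Cb\,R\ell^{-1}$). Optimizing in $\ell$ by setting $R\ell = bR\ell^{-1}$, i.e.\ $\ell = b^{1/2}$, which is admissible since $0<b<1\le R$ forces $\ell<1\le R/2$ (after possibly shrinking to $\ell=\min(b^{1/2},R/2)$, harmless here), yields $e_D(b,R)\le e_N(b,R)+CR\,b^{1/2}$, which is \eqref{eD<ep}. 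The main obstacle is making the slicing/averaging step rigorous so that the energy of $u$ on the boundary strip is genuinely controlled by a factor $\ell/R$ times the total energy rather than merely by the total energy itself — without this, the cutoff cost would be $O(e_N(b,R))=O(R^2)$, which is far too large; the pigeonhole choice of the optimal translate among $O(R/\ell)$ disjoint strips is what saves the day, together with the a priori bound $|u|\le 1$ that tames the potential term on the strip.
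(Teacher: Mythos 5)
Your overall strategy---cut off a Neumann minimizer $u$ at scale $\ell=b^{1/2}$ from the boundary and use $\chi u$ as a Dirichlet test function---is exactly the paper's, and your treatment of the potential term (using $|u|\le 1$ on a strip of area $O(R b^{1/2})$) and of the $b|\nabla\chi|^2|u|^2$ term is fine. The gap is in the step you yourself flag as the main obstacle: controlling the kinetic energy of $u$ on the transition strip after applying $|a+b|^2\le 2|a|^2+2|b|^2$. Your slicing argument does not work as stated. First, averaging over translates $t\in(0,\ell)$ gives a family of heavily overlapping strips all contained in a collar of width $2\ell$; the average of $\int_{S_t'}e$ over that range is comparable to the energy in the collar, not to $(\ell/R)$ times the total, so the pigeonhole factor $\ell/R$ does not follow. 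Second, if you repair this by letting $t$ range over $\sim R/\ell$ \emph{disjoint} positions going deep into $Q_R$, then the chosen transition strip may sit at depth $t_0$ up to $O(R)$, and the test function $\chi u$ must vanish on the entire outer collar $Q_R\setminus Q_{R-t_0}$; there the potential density is $\tfrac12(1-0)^2=\tfrac12$, so the excess cost is up to $\tfrac12|Q_R\setminus Q_{R-t_0}|\approx 2Rt_0$, which can be $O(R^2)$. The Dirichlet constraint on the fixed square $Q_R$ thus confines the transition to a collar of width $O(b^{1/2})$, where only $O(1)$ disjoint strips of width $b^{1/2}$ fit, and the pigeonhole gain evaporates.

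The paper's proof closes this gap without any slicing, by avoiding the doubling of the kinetic term altogether: instead of Cauchy--Schwarz on $(\nabla-i\Ab_0)(\chi u)=(\nabla\chi)u+\chi(\nabla-i\Ab_0)u$, it integrates the cross term by parts (the IMS localization identity), which gives
\begin{equation*}
\int_{Q_R}|(\nabla-i\Ab_0)(\chi u)|^2\,dx=\int_{Q_R}\chi^2|(\nabla-i\Ab_0)u|^2\,dx-\int_{Q_R}\chi\,\Delta\chi\,|u|^2\,dx\,;
\end{equation*}
the $|\nabla\chi|^2|u|^2$ contribution cancels exactly against part of the cross term, and since $\chi^2\le 1$ the first term is absorbed by the kinetic energy of $u$ with constant $1$, not $2$. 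The price is the $\Delta\chi$ term, and this is why the paper requires the second-derivative bound $|\Delta\chi_{R,b}|\le C'b^{-1}$ in \eqref{nabla-delta}: multiplied by the prefactor $b$ and the strip area $O(Rb^{1/2})$ it yields $O(Rb^{1/2})$, which is the claimed error. To repair your argument, replace the $2|a|^2+2|b|^2$ step by this identity (or, alternatively, invoke a pointwise gradient bound $|(\nabla-i\Ab_0)u|\le Cb^{-1/2}$ for minimizers, but that requires elliptic estimates the proposition does not assume).
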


\begin{proof}
Without loss of generality, we can suppose $\sigma=+1$. Let $u\in H^{1}(Q_{R})$ be a minimizer of the functional in \eqref{eq-GL-F}, i.e. such that:
\begin{equation}\label{eN=F}
e_{N}(b,R)=F^{+1}_{b,Q_{R}}(u)=\int_{Q_{R}}\left(b|(\nabla-i\Ab_0)u|^2+\frac{1}{2}\left(1-|u|^2\right)^{2}\right)\,dx\,.
\end{equation}
We introduce a cut-off function $\chi_{R,b}\in C_{c}^{\infty}(\R^{2})$ such that
\begin{equation}\label{def-chiR}
0\leq\chi_{R,b}\leq 1\qquad {\rm in}~\R^{2}\,,\qquad {\rm supp}\chi_{R,b}\subset Q_{R}\,,\qquad\chi_{R,b}=1\qquad{\rm in}~Q_{R-b^{\frac{1}{2}}}\,.
\end{equation}
In addition, the function $\chi_{R,b}$ can be chosen such that for some universal constants $C$ and $C'$, we have,
\begin{equation}\label{nabla-delta}
|\nabla\chi_{R,b}|\leq Cb^{-\frac{1}{2}}\qquad {\rm and} \qquad|\Delta\chi_{R,b}|\leq C'b^{-1}\,,\qquad\forall R\geq 1\qquad{\rm and}\qquad\forall b\in(0,1)\,.
\end{equation}
Let $u_{R,b}(x)=\chi_{R,b}(x)u(x)$. Then $u_{R,b}\in H^{1}_{0}(Q_{R})$ and consequently
\begin{equation}\label{eD=F}
e_{D}(b,R)\leq F^{+1}_{b,Q_{R}}(u_{R,b})\,.
\end{equation}
We rewrite $F^{+1}_{b,Q_{R}}(u_{R,b})$ as follows,
\begin{align}\label{1st-F=}
F^{+1}_{b,Q_{R}}(u_{R,b})&=\int_{Q_{R}} \left( b|(\nabla-i\Ab_{0})\chi_{R,b}u|^{2}+\frac{1}{2}\left(1-|\chi_{R,b}u|^{2}\right)^{2}\right)\,dx\nonumber\\
&=\int_{Q_{R}} \left( b|(\nabla-i\Ab_{0})\chi_{R,b}u|^{2}+\frac{1}{2}\Big(1-2|u|^{2}+|\chi_{R,b}u|^{4}+2(|u|^{2}-|\chi_{R,b}u|^{2})\Big)\right)\,dx\nonumber\\
&\leq\int_{Q_{R}} \left( b|(\nabla-i\Ab_{0})\chi_{R,b}u|^{2}+\frac{1}{2}\Big(1-|u|^{2}\Big)^{2}\right)\,dx+\int_{Q_{R}\setminus Q_{R-b^{\frac{1}{2}}}}\Big(1-|\chi_{R,b}|^{2}\Big)|u|^{2}\,dx\,.
\end{align}
We estimate from above the term $\displaystyle\int_{Q_{R}}|(\nabla-i\Ab_{0})\chi_{R,b}u|^{2}\,dx$ as follows:
\begin{align*}
\int_{Q_{R}}|(\nabla-i\Ab_{0})\chi_{R,b}u|^{2}\,dx&=\Big\langle(\nabla-i\Ab_{0})\chi_{R,b}u,(\nabla-i\Ab_{0})\chi_{R,b}u\Big\rangle\\
&=\Big\langle\nabla\chi_{R,b}u+\chi_{R,b}(\nabla-i\Ab_{0})u,\nabla\chi_{R,b}u+\chi_{R,b}(\nabla-i\Ab_{0})u\Big\rangle\\
&=\Big\langle\nabla\chi_{R,b}u,\nabla\chi_{R,b}u\Big\rangle+\Big\langle\chi_{R,b}(\nabla-i\Ab_{0})u,\chi_{R,b}(\nabla-i\Ab_{0})u\Big\rangle\\
&\qquad\qquad+\Big\langle\nabla\chi_{R,b}u,\chi_{R,b}(\nabla-i\Ab_{0})u\Big\rangle+\Big\langle\chi_{R,b}(\nabla-i\Ab_{0})u,\nabla\chi_{R,b}u\Big\rangle\,.
\end{align*}
An integration by parts yields,
\begin{multline}
\langle\nabla\chi_{R,b}u,\chi_{R,b}(\nabla-i\Ab_{0})u\rangle=-\langle\nabla\chi_{R,b}u,\nabla\chi_{R,b}u\rangle\\
-\langle\chi_{R,b}\Delta\chi_{R,b}u,u\rangle-\langle\chi_{R,b}(\nabla-i\Ab_{0})u,\nabla\chi_{R,b}u\rangle\,,
\end{multline}
which implies that
\begin{equation}\label{2nd-F=}
\int_{Q_{R}}|(\nabla-i\Ab_{0})\chi_{R,b}u|^{2}\,dx=\Big\langle\chi_{R,b}(\nabla-i\Ab_{0})u,\chi_{R,b}(\nabla-i\Ab_{0})u\Big\rangle-\Big\langle\chi_{R,b}\Delta\chi_{R,b}u,u\Big\rangle\,.
\end{equation}
Putting \eqref{2nd-F=} into \eqref{1st-F=}, we get
\begin{align}
F^{+1}_{b,Q_{R}}(u_{R,b})&\leq\int_{Q_{R}} \left( b|\chi_{R,b}(\nabla-i\Ab_{0})u|^{2}+\frac{1}{2}\Big(1-|u|^{2}\Big)^{2}\right)\,dx\nonumber\\
&\qquad\qquad\qquad+\int_{Q_{R}\setminus Q_{R-b^{\frac{1}{2}}}}\Big(1-|\chi_{R,b}|^{2}\Big)|u|^{2}\,dx+b\int_{Q_{R}\setminus Q_{R-b^{\frac{1}{2}}}}|\Delta\chi_{R,b}|\,|u|^{2}\,dx\,.
\end{align}
By using the bound $|u|\leq 1$, \eqref{nabla-delta} and the assumption on the support of $\chi_{R,b}$ in \eqref{def-chiR}, it is easy to check that,
$$F^{+1}_{b,Q_{R}}(u_{R,b})\leq F^{+1}_{b,Q_{R}}(u)+CR\,b^{\frac{1}{2}}\,.$$
Using \eqref{eD=F} and \eqref{eN=F}, we  get
$$e_{D}(b,R)\leq e_{N}(b,R)+CR\,b^{\frac{1}{2}}\,.$$
\end{proof}
\begin{corol}\label{limite}
With $\hat{f}(b)$ introduced in \eqref{f(b)}, it holds,
\begin{equation}
\hat{f}(b)=\lim_{R\longrightarrow+\infty}\frac{e_{N}(b,R)}{R^{2}}\,.
\end{equation}
\end{corol}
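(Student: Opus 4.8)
The plan is to sandwich $e_N(b,R)/R^2$ between two quantities that both converge to $\hat f(b)$ as $R\to\infty$. The upper bound is immediate: from \eqref{eN<ep<eD} we have $e_N(b,R)\leq e_p(b,R)$, hence
$$
\limsup_{R\to\infty}\frac{e_N(b,R)}{R^2}\leq \lim_{R\to\infty}\frac{e_p(b,R)}{R^2}=\hat f(b)\,,
$$
where the last equality is part \eqref{f(b)} of Theorem~\ref{pro-f(b)} (noting $|Q_R|=R^2$). The lower bound is where the Proposition just proved does the work: for $0<b<1$ and $R\geq 1$ we have $e_D(b,R)\leq e_N(b,R)+CR\,b^{1/2}$, so together with $e_N\leq e_D$ (again \eqref{eN<ep<eD}) we get
$$
e_N(b,R)\leq e_D(b,R)\leq e_N(b,R)+CR\,b^{1/2}\,,
$$
and therefore $\bigl|e_D(b,R)-e_N(b,R)\bigr|\leq CR\,b^{1/2}$. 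Dividing by $R^2$ shows $\frac{e_D(b,R)}{R^2}-\frac{e_N(b,R)}{R^2}=O(1/R)\to 0$ as $R\to\infty$.

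Combining the two observations, $e_N(b,R)/R^2$ and $e_D(b,R)/R^2$ have the same limit (if either exists); since $\lim_{R\to\infty} e_D(b,R)/R^2 = \hat f(b)$ by \eqref{f(b)}, we conclude
$$
\lim_{R\to\infty}\frac{e_N(b,R)}{R^2}=\hat f(b)\,.
$$
This handles the range $b\in(0,1)$. For $b\geq 1$ one may either invoke part (2) of Theorem~\ref{pro-f(b)}, which gives $\hat f(b)=\tfrac12$, together with the matching bounds $e_N(b,R)/R^2 \leq e_D(b,R)/R^2 \to \tfrac12$ and the easy lower bound $e_N(b,R)\geq (1-o(1))\tfrac12 R^2$ coming from $|u|\leq 1$ and a direct estimate (or \eqref{eN<f}), or simply note that the corollary as used in the paper only requires $b\in[0,1]$; the $b=0$ case is trivial since $e_N(0,R)=0=\hat f(0)R^2$.

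The only subtlety worth flagging is that the Proposition's error term $CR\,b^{1/2}$ is linear in $R$, not quadratic, so dividing by $R^2$ is exactly what makes it vanish; this is why $R\geq 1$ suffices and no lower bound on $b$ relative to $R$ is needed. There is no real obstacle here — the content is entirely contained in the Proposition just proved plus the already-established identity \eqref{f(b)} relating $e_D$ and $e_p$ to $\hat f$. One should just be slightly careful to state the argument uniformly, e.g. by first establishing $\bigl|\tfrac{e_N(b,R)}{R^2}-\tfrac{e_D(b,R)}{R^2}\bigr|\to 0$ and then passing to the limit in \eqref{f(b)}.
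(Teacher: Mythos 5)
Your proof is correct and follows essentially the same route as the paper: sandwiching $e_{N}(b,R)$ between $e_{D}(b,R)-CR\,b^{1/2}$ and $e_{D}(b,R)$ using \eqref{eN<ep<eD} and \eqref{eD<ep}, dividing by $R^{2}$, and passing to the limit via \eqref{f(b)}. The extra remarks on the cases $b\geq 1$ and $b=0$ are harmless additions but not needed for the argument.
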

\begin{proof}
We have from \eqref{eN<ep<eD} and  \eqref{eD<ep} that, for any $ b\in(0,1)\,,$
$$ e_{D}(b,R)-CR\,b^{\frac{1}{2}}\leq e_{N}(b,R)\leq e_{D}(b,R)\,.$$
Having in mind \eqref{f(b)}, we divide all sides of this inequality by $R^{2}$ and then take the limit as $R\longrightarrow+\infty$ . That gives us 
$$\hat{f}(b)=\lim_{R\longrightarrow\infty}\frac{e_{N}(b,R)}{R^{2}}\,.$$
\end{proof}
\begin{prop}[Fournais]\label{FR}
There exists a positive constant $C$, such that if \eqref{cond-b&R} is satisfied, then
\begin{eqnarray}
\frac{e_{D}(b,R)}{R^{2}}\leq \hat{f}(b)+C\frac{\sqrt{b}}{R}\label{eD<f(b)}\,,\\
\frac{e_{D}(b,R)}{R^{2}}\geq \hat{f}(b)\label{eD>f(b)}\,.
\end{eqnarray}
\end{prop}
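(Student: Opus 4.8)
The plan is to establish the two bounds by separate arguments: the lower bound by subadditivity of $e_{D}$ under tiling, and the upper bound by subdividing a magnetic-periodic ground state on a large square. A tool used in both is the \emph{magnetic translation}: for $a\in\R^{2}$ and $u\in H^{1}(Q_{R})$ one sets $v(x)=e^{i\Ab_{0}(a)\cdot x}\,u(x-a)$, and since $\Ab_{0}(x)-\Ab_{0}(a)=\Ab_{0}(x-a)$ a direct computation gives $(\nabla-i\Ab_{0})v(x)=e^{i\Ab_{0}(a)\cdot x}\big[(\nabla-i\Ab_{0})u\big](x-a)$; hence $|v(x)|=|u(x-a)|$, $F^{+1}_{b,Q_{R}(a)}(v)=F^{+1}_{b,Q_{R}}(u)$, and $v\in H^{1}_{0}(Q_{R}(a))$ whenever $u\in H^{1}_{0}(Q_{R})$.

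For the lower bound \eqref{eD>f(b)}, I would fix $R\geq1$, $b\in(0,1)$, $N\in\N$, tile $Q_{NR}$ by the $N^{2}$ subsquares $Q_{R}(a_{k})$ of side $R$, and take a Dirichlet ground state $u$ for $e_{D}(b,R)$, extended by $0$ outside $Q_{R}$. Placing the magnetic translate $v_{k}$ of $u$ (again zero-extended to $\R^{2}$) on each $Q_{R}(a_{k})$ and summing gives $V=\sum_{k}v_{k}\in H^{1}(\R^{2})$, supported in $\overline{Q_{NR}}$ with vanishing trace on $\partial Q_{NR}$; hence $V|_{Q_{NR}}\in H^{1}_{0}(Q_{NR})$, and $F^{+1}_{b,Q_{NR}}(V)=\sum_{k}F^{+1}_{b,Q_{R}(a_{k})}(v_{k})=N^{2}e_{D}(b,R)$ by additivity of the integral and translation invariance. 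Therefore $e_{D}(b,NR)\leq N^{2}e_{D}(b,R)$, so $e_{D}(b,NR)/(NR)^{2}\leq e_{D}(b,R)/R^{2}$; letting $N\to\infty$ and using \eqref{f(b)} yields $\hat f(b)\leq e_{D}(b,R)/R^{2}$.

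For the upper bound \eqref{eD<f(b)}, I would fix $R\geq1$, $b\in(0,1)$, take $N$ large enough that $NR\geq R_{0}$, and let $u\in E_{NR}$ be a ground state for $e_{p}(b,NR)$, so that $|u|\leq1$ by \eqref{up-u-1} and $e_{p}(b,NR)\leq(NR)^{2}\hat f(b)+C\,NR$ by \eqref{est-f(b)}. Partitioning $Q_{NR}$ into its $N^{2}$ subsquares of side $R$ and using additivity of the energy, some subsquare $Q_{R}(a_{k})$ satisfies $F^{+1}_{b,Q_{R}(a_{k})}(u)\leq N^{-2}e_{p}(b,NR)\leq R^{2}\hat f(b)+CR/N$. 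I would then truncate $u$ on $Q_{R}(a_{k})$ with the cut-off $\chi_{R,b}$ of \eqref{def-chiR} centred at $a_{k}$: repeating the computation proving \eqref{eD<ep}, which uses only $|u|\leq1$ and the bounds \eqref{nabla-delta}, gives $F^{+1}_{b,Q_{R}(a_{k})}(\chi_{R,b}u)\leq F^{+1}_{b,Q_{R}(a_{k})}(u)+CR\sqrt b$ with $\chi_{R,b}u\in H^{1}_{0}(Q_{R}(a_{k}))$. A magnetic translation back to $Q_{R}$ then gives $e_{D}(b,R)\leq R^{2}\hat f(b)+CR/N+CR\sqrt b$, and letting $N\to\infty$ proves $e_{D}(b,R)/R^{2}\leq\hat f(b)+C\sqrt b/R$.

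The main obstacle is extracting the sharp error $O(\sqrt b/R)$ rather than a crude $O(1/R)$. Truncating a periodic minimizer directly on $Q_{R}$ only yields $e_{D}(b,R)\leq e_{p}(b,R)+CR\sqrt b$, and combining this with \eqref{est-f(b)} introduces an extra $C/R$, which dominates $C\sqrt b/R$ since $b<1$. The point of passing to $Q_{NR}$ and averaging over subsquares before truncating is precisely that the remainder $CR/N$ coming from \eqref{est-f(b)} disappears in the limit $N\to\infty$, leaving only the truncation cost $CR\sqrt b$. The remaining ingredients, namely the translation invariance of $F^{+1}_{b,Q_{R}}$ and the fact that zero-extensions of $H^{1}_{0}$ functions glue into an $H^{1}_{0}$ function, are routine.
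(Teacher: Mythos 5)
Your argument is correct, and it splits naturally into a part that coincides with the paper and a part that does not. For the lower bound \eqref{eD>f(b)} you do exactly what the paper does: tile $Q_{NR}$ by magnetic translates of a Dirichlet minimizer of $Q_R$ (the paper phrases this as extension by magnetic periodicity, which is the same construction), deduce $e_D(b,NR)\leq N^2 e_D(b,R)$, and pass to the limit via \eqref{f(b)}. For the upper bound \eqref{eD<f(b)} your route is genuinely different. The paper takes a Dirichlet minimizer $u$ of $Q_{nR}$, uses the Euler--Lagrange equation to write $e_D(b,nR)=-\tfrac12\int_{Q_{nR}}(|u|^4-1)\,dx$, truncates on \emph{every} subsquare, and shows each truncated energy is bounded by the corresponding piece of that integral plus $CRb^{1/2}$; summing and comparing with \eqref{f4} gives $n^2e_D(b,R)\leq e_D(b,nR)+Cn^2Rb^{1/2}$. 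You instead truncate on a \emph{single} subsquare chosen by pigeonhole to have below-average energy, which lets you use only the crude localization estimate $F^{+1}_{b,Q_R(a_k)}(\chi u)\leq F^{+1}_{b,Q_R(a_k)}(u)+CRb^{1/2}$ (valid, as you note, for any $H^1$ function with $|u|\leq1$, since the IMS identity \eqref{2nd-F=} needs no equation), at the price of needing an a priori upper bound on the average, which you get from \eqref{est-f(b)}. This is legitimate within the paper's logical order, since Theorem~\ref{pro-f(b)} is quoted from the literature before Proposition~\ref{FR}; note also that you could dispense with the quantitative rate \eqref{est-f(b)} altogether, since for fixed $R$ and $b$ the limit \eqref{f(b)} already gives $N^{-2}e_p(b,NR)\to R^2\hat f(b)$, so the error from the average is $\textit{o}(1)$ as $N\to\infty$ in any case (and one could equally well average over a Dirichlet minimizer of $Q_{NR}$ and avoid $e_p$ entirely). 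In short: your approach trades the paper's equation-based identity for an averaging argument plus a quoted convergence estimate; both yield the same sharp error $C\sqrt{b}/R$.
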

\begin{proof}
We have already seen that
\begin{equation}\label{f1}
\hat{f}(b) = \lim_{R\rightarrow +\infty} \frac{e_D(b,R)}{R^2}\,.
\end{equation}
\textbf{Let us first prove \eqref{eD>f(b)}}. Let $n\in\N^{*}$ and $R>0$. Let $u\in H^{1}_{0}(Q_{R})$ be a minimizer of $F^{+1}_{b,Q_{R}}$ (i.e. $e_{D}(b,R)=F^{+1}_{b,Q_{R}}(u)$). We extend $u$ to a function $\widetilde{u}\in H^{1}_{0}(Q_{nR})$ by `\textit{magnetic periodicity}' as follows 
$$\widetilde{u}(x_{1}+R,x_{2})=e^{iR\frac{x_{2}}{2}}u(x_{1},x_{2})\,,\qquad\widetilde{u}(x_{1},x_{2}+R)=e^{-iR\frac{x_{1}}{2}}u(x_{1},x_{2})\,.$$
Let $\mathcal J^n =\{j\in\Z, 1\leq j\leq n^{2}\}$. Notice that, the square $Q_{nR}$ is formed exactly of $n^{2}$ squares $(\overline{Q_{R}(x_{0}^{j})})_{j\in \mathcal J^n }$. We define in each $Q_{R}(x_{0}^{j})$ the following function
$$u_{j}=\widetilde{u}_{\mid_{Q_{R}(x_{0}^{j})}}\,.$$
Observe that $u_{j}$ is a minimizer of $F^{+1}_{b,Q_{R}(x_{0}^{j})}$ in $H^{1}_{0}(Q_{R}(x_{0}^{j}))$ and if we extend $u_{j}$ by $0$ outside of $Q_{R}(x_{0}^{j})$, keeping the same notation $u_{j}$ for this extension, we have, $\widetilde{u}=\sum_{i\in \mathcal J^n }u_{j}$. Using magnetic translation invariance, it is easy to check that
$$F^{+1}_{b,Q_{nR}}(\widetilde{u})=\sum_{j\in\mathcal J^n } F^{+1}_{b,Q_{R}}(u_{j})=n^{2}e_{D}(b,R)\,.$$
Consequently, we get 
$$e_{D}(b,nR)\leq n^{2}e_{D}(b,R)\,.$$
We now divide both sides of this inequality by $n^{2}R^{2}$ then we take the limit as $n\longrightarrow\infty$. Having in mind \eqref{f(b)}, this gives \eqref{eD>f(b)}.

\textbf{We prove \eqref{eD<f(b)}}.\\
 If  $ n\in \mathbb N^*$ and $j=(j_{1},j_{2})\in\Z^{2}$, we denote by
$$K_{j}=I_{j_{1}}\times I_{j_{2}}\,,$$
where 
$$\forall m\in\Z,\qquad I_{m}=\left(\frac{2m+1-n}{2}-\frac{1}{2},\frac{2m+1-n}{2}+\frac{1}{2}\right)\,.$$
For all $R>0$, we set
$$Q_{R,j}=\{Rx:\,x\in K_{j}\}\,.$$
Let $\mathcal J^n =\{j=(j_{1},j_{2})\in\Z^{2}:0\leq j_{1},j_{2}\leq n-1\}$ and $Q_{nR}=\Big(-\frac{nR}{2},\frac{nR}{2}\Big)\times\Big(-\frac{nR}{2},\frac{nR}{2}\Big)$. Then the family  $(\overline{Q}_{R,j})$ is a covering of $Q_{nR}$, formed exactly of $n^{2}$ squares. Let $u=u_{nR}\in H^{1}_{0}(Q_{nR})$ be a minimizer of of $F^{+1}_{b,Q_{nR}}$ i.e. $F^{+1}_{b,Q_{nR}}(u)=e_D (b,nR)$. We have the obvious decomposition,
\begin{equation}\label{u1}
\int_{Q_{nR}}|u(x)|^{4}\,dx=\sum_{i\in\mathcal J^n }\int_{Q_{R,j}}|u(x)|^{4}\,dx\,.
\end{equation}
Let $\chi=\chi_{R,b^{\frac{1}{2}}}(x-x_{0}^{j})$, where $\chi_{R,b^{\frac{1}{2}}}$ is the cut-off function introduced in \eqref{def-chiR}. The function $u$ satisfies $-b(\nabla-i\Ab_{0})^{2}u=(1-|u|^{2})u$ in $Q_{nR}$. It results from an integration by parts that
 \begin{equation}\label{f4}
 e_D(b,nR)=F^{+1}_{b,Q_{nR}}(u)= - \frac 12 \int_{Q_{nR}} (|u(x)|^4-1)\, dx\,.
 \end{equation}
 We may write,
 \begin{align*}
 \int_{Q_{R,j}} |(\nabla-i\Ab_{0})\chi u|^{2}\,dx &=\Big\langle(\nabla-i\Ab_{0})\chi u,(\nabla-i\Ab_{0})\chi u\Big\rangle\\
 &=\Big\langle\nabla\chi\,u,\nabla\chi\,u\Big\rangle+\Big\langle\chi\,(\nabla-i\Ab_{0})u,\chi\,(\nabla-i\Ab_{0})u\Big\rangle\\
 &\qquad\qquad\qquad\qquad\qquad\quad+2\Big\langle\nabla\chi\,u,\chi\,(\nabla-i\Ab_{0})u\Big\rangle\\
 &=\Big\langle\nabla\chi\,u,\nabla\chi\,u\Big\rangle+\Big\langle(\nabla-i\Ab_{0})(\chi^{2}\,u),(\nabla-i\Ab_{0})u\Big\rangle.
 \end{align*}
 An integration by parts gives us 
 \begin{equation}\label{IP}
  \int_{Q_{R,j}} |(\nabla-i\Ab_{0})\chi u|^{2}\,dx=\int_{Q_{R,j}} |\nabla\chi|^2|u|^2\,dx-\Big\langle\chi^{2}\,u,(\nabla-i\Ab_{0})^{2}u\Big\rangle.
 \end{equation}
Using \eqref{IP}, we may express the energy $F^{+1}_{b,Q_{R,j}}(\chi u)$ as follows:
 \begin{align*}
F^{+1}_{b,Q_{R,j}} (\chi u) &= \int_{Q_{R,j}}\Big( b|(\nabla-i\Ab_{0})\chi u|^{2}-|\chi u|^{2}\Big)\,dx+\frac 12  \int_{Q_{R,j}} \Big( |\chi u|^4+1\Big) dx\\
&=-\langle \chi^2u, (b(\nabla-i\Ab_{0})^{2} +1) u \rangle + b \int_{Q_{R,j}} |\nabla \chi|^2 |u|^2\,dx + \frac 12  \int_{Q_{R,j}} (\chi^4 |u|^4+1) dx\,. 
 \end{align*}
 Using the equation $(b(\nabla-i\Ab_{0})^{2} +1) u = |u|^2 u$ and the inequality $\chi^{4}\leq\chi^{2}$, we get
 \begin{align*}
 F^{+1}_{b,Q_{R,j}} (\chi u) &\leq b \int_{Q_{R,j}} |\nabla \chi|^2 |u|^2\,dx - \frac 12  \int_{Q_{R,j}} \Big(\chi^2 |u|^4-1\Big) dx\\
 &\leq b \int_{Q_{R,j}} |\nabla \chi|^2 |u|^2\,dx+\frac 12  \int_{Q_{R,j}}\Big(1-\chi^{2}\Big) \,dx- \frac 12  \int_{Q_{R,j}} \Big(|u|^4-1\Big) dx\\
 &\leq - \frac 12 \int_{Q_{R,j}} \Big(|u|^4-1\Big)\,dx + C b^{\frac{1}{2}}R\,.
  \end{align*}
  Since each $\chi u$ has support in a square of side length $R$, we get
 \begin{equation}\label{f5}
 F^{+1}_{b,Q_{R,j}} (\chi u) \geq e_D (b,R)\,.
 \end{equation} 
  We sum over  the $n^{2}$  squares $(Q_{R,j})_{j\in\mathcal J^n }$ (that cover $Q_{nR}$), and get
  $$
  n^2 e_D (b,R) \leq - \frac 12 \int_{Q_{nR}} (|u|^4-1)\,dx  + C  b^{\frac{1}{2}}R n^2\,.
  $$
  Using \eqref{f4}, we obtain
  $$
  n^2 e_D (b,R) \leq e_D (b,nR) +  Cn^2 R b^\frac 12\,.
  $$
  Dividing by $n^2 R^2$, we obtain
  $$
  \frac{e_D (b,R)}{R^2} \leq \frac{e_D (b, nR)}{(nR)^2} + C R^{-1} b^\frac 12\,.
  $$
  We take the limit $n\rightarrow + \infty$ and get \eqref{eD<f(b)}.
\end{proof}
\section{Upper bound of the energy}\label{upperbound}
The aim of this section is to give an upper bound on the ground state energy $\E0(\kappa,H)$ introduced in \eqref{eq-2D-gs}.\\
In the sequel, for some choice of $\rho\in(0,1)$ to be determined later  (see \eqref{choice-ell-rho}), we consider triples $(\ell,x_0,\widetilde{x}_{0})$ such that $\overline{Q_{\ell}(x_0)}\subset\{|B_{0}|>\rho\}\cap\Omega$ and $\widetilde{x}_{0}\in\overline{Q_{\ell}(x_{0})}$. In this situation, we say that this triple is $\rho$-admissible, that the pair $(\ell,x_0)$ is $\rho$-admissible and the corresponding square $Q_\ell (x_0)$ is a $\rho$-admissiblle.
 Let us introduce the function:
\begin{equation}\label{def-w}
w_{\ell,x_0,\widetilde{x}_{0}}(x)=\begin{cases}
e^{i\kappa H\varphi_{x_{0},\widetilde{x}_{0}}}u_{R}\left(\frac{R}{\ell}(x-x_{0})\right)&{\rm if}~x\in Q_{\ell}(x_0)\subset\{B_{0}>\rho\}\cap\Omega\\
e^{i\kappa H\varphi_{x_{0},\widetilde{x}_{0}}}\overline{u}_{R}\left(\frac{R}{\ell}(x-x_{0})\right)&{\rm if}~x\in Q_{\ell}(x_0)\subset\{B_{0}<-\rho\}\cap\Omega\,,
\end{cases}
\end{equation}
where $u_{R}\in H^{1}_{0}(\Omega)$ is a minimizer of the functional in \eqref{eq-GL-F} and $\varphi_{x_{0},\widetilde{x}_{0}}$ is the function introduced in \cite[Lemma~A.3]{KA} that satisfies
\begin{equation}\label{F-A}
|\Fb(x)-\sigma_{\ell}|B_{0}(\widetilde{x}_{0})|\Ab_{0}(x-x_{0})-\nabla\varphi_{x_{0},\widetilde{x}_{0}}(x)|\leq C\ell^{2},\,\qquad\,\,\left( x \in Q_{\ell}(x_{0})\right)\,,
\end{equation}
where $B_{0}(\widetilde{x}_{0})=\curl\Fb(\widetilde{x}_{0})$, $\Ab_{0}$ is the magnetic potential introduced in \eqref{eq-hc2-mpA0} and $\sigma_{\ell}$ is the sign of $B_{0}(x)$ in $Q_{\ell}(x_{0})$.
\begin{prop}\label{pp-up-Eg}
Under Assumption~\eqref{B(x)}, there exist positive constants $C$ and $\kappa_{0}$ such that if $\kappa\geq\kappa_{0}$, $\ell\in(0,1)$, $\delta\in(0,1)$, $\rho>0$, $\ell^{2}\kappa H\rho>1$, and $(\ell,x_0,\widetilde x_0)$ is a $\rho$-admissible triple, then,
\begin{equation}\label{up-Eg-eq}
\frac{1}{|Q_{\ell}(x_0)|}\mathcal{E}_{0}(w_{\ell,x_0,\widetilde{x}_{0}},\Fb,Q_{\ell}(x_0))\leq (1+\delta)\kappa^{2} \hat{f}\left(\frac{H}{\kappa}|B_{0}(\widetilde{x}_{0})|\right)+C\left(\frac{1}{\ell H}+\delta^{-1}\ell^{4}\kappa H\right)\kappa H\,.
\end{equation}
\end{prop}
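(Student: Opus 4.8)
The plan is to reduce the energy of the test function $w_{\ell,x_0,\widetilde x_0}$ on $Q_\ell(x_0)$ to the model energy $F^{+1}_{b,Q_R}$ with $b=\frac{H}{\kappa}|B_0(\widetilde x_0)|$ by a change of variables $y=\frac{R}{\ell}(x-x_0)$, and then invoke Proposition~\ref{FR} (the Fournais upper bound $e_D(b,R)/R^2\le \hat f(b)+C\sqrt b/R$) after choosing $R$ appropriately in terms of the other parameters. Throughout I assume $\sigma_\ell=+1$ (the case $B_0<-\rho$ being identical after complex conjugation, since $F^{+1}_{b,Q_R}(u)=F^{-1}_{b,Q_R}(\overline u)$). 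First I would write $\mathcal E_0(w,\Fb,Q_\ell(x_0))=\int_{Q_\ell(x_0)}\big(|(\nabla-i\kappa H\Fb)w|^2+\tfrac{\kappa^2}{2}(1-|w|^2)^2\big)\,dx$, substitute the explicit form $w=e^{i\kappa H\varphi}u_R(\tfrac{R}{\ell}(x-x_0))$, and use the gauge identity $|(\nabla-i\kappa H\Fb)w|^2=|(\nabla-i\kappa H(\Fb-\nabla\varphi))\,[u_R\circ\text{(scaling)}]|^2$, so that the phase $\varphi$ removes $\Fb$ up to the error $|\Fb-|B_0(\widetilde x_0)|\Ab_0(x-x_0)-\nabla\varphi|\le C\ell^2$ from \eqref{F-A}.

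Next I would carry out the scaling. With $y=\frac{R}{\ell}(x-x_0)$ one has $dx=(\ell/R)^2\,dy$, $\nabla_x=\frac{R}{\ell}\nabla_y$, and $\Ab_0(x-x_0)=\frac{\ell}{R}\Ab_0(y)$ (homogeneity of degree one), so $(\nabla_x-i\kappa H|B_0(\widetilde x_0)|\Ab_0(x-x_0))[u_R\circ\text{scaling}]=\frac{R}{\ell}(\nabla_y-i\kappa H|B_0(\widetilde x_0)|\frac{\ell^2}{R^2}\Ab_0(y))u_R(y)$. Choosing $R$ so that $\kappa H|B_0(\widetilde x_0)|\frac{\ell^2}{R^2}=1$, i.e. $R=\ell\sqrt{\kappa H|B_0(\widetilde x_0)|}$ (which is $\ge 1$ by the hypothesis $\ell^2\kappa H\rho>1$ and $|B_0(\widetilde x_0)|\ge\rho$, so that Proposition~\ref{FR} applies), the leading term becomes $\kappa H|B_0(\widetilde x_0)|\int_{Q_R}|(\nabla-i\Ab_0)u_R|^2\,dy$ times $(\ell/R)^2=1/(\kappa H|B_0(\widetilde x_0)|)$. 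After similarly rescaling the potential term $\tfrac{\kappa^2}{2}(1-|u_R|^2)^2$ and factoring out $\kappa^2(\ell/R)^2=\kappa/(H|B_0(\widetilde x_0)|)\cdot(\ell/R)^2\cdot H|B_0(\widetilde x_0)|$... more carefully: one writes $\mathcal E_0(w,\Fb,Q_\ell(x_0))\approx \frac{\kappa^2\ell^2}{R^2}\Big(\frac{H}{\kappa}|B_0(\widetilde x_0)|\int_{Q_R}|(\nabla-i\Ab_0)u_R|^2 + \tfrac12\int_{Q_R}(1-|u_R|^2)^2\Big) = \frac{\kappa^2\ell^2}{R^2}F^{+1}_{b,Q_R}(u_R)=\frac{\kappa^2\ell^2}{R^2}e_D(b,R)$, since $u_R$ is a minimizer in $H^1_0$. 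Then $\frac{1}{|Q_\ell(x_0)|}\mathcal E_0=\frac{\kappa^2}{R^2}e_D(b,R)\le \kappa^2(\hat f(b)+C\sqrt b/R)$ by \eqref{eD<f(b)}, and since $b\le 1$ and $R=\ell\sqrt{\kappa H|B_0(\widetilde x_0)|}\gtrsim \ell\sqrt{\kappa H}$, the term $\kappa^2 C\sqrt b/R$ is $O(\kappa^2/(\ell\sqrt{\kappa H}))=O(\kappa^{3/2}H^{-1/2}\ell^{-1})$, which I would absorb into $C(\ell H)^{-1}\kappa H=C\kappa/\ell$ — indeed $\kappa^{3/2}H^{-1/2}\ell^{-1}\le \kappa\ell^{-1}$ precisely when $H\ge\kappa$... so more care is needed here and the cross term from \eqref{F-A} must be tracked; this is the delicate point (see below).

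The main obstacle is the control of the cross terms coming from (i) the $\varphi$-approximation error $|\Fb-|B_0(\widetilde x_0)|\Ab_0(x-x_0)-\nabla\varphi|\le C\ell^2$ on $Q_\ell(x_0)$, and (ii) the variation of $B_0$ within the square (replacing $|B_0(x)|$ by $|B_0(\widetilde x_0)|$). For (i), I would expand $|(\nabla-i\kappa H\Fb)w|^2 = |(\nabla - i\kappa H|B_0(\widetilde x_0)|\Ab_0(x-x_0))v - i\kappa H\, r(x) v|^2$ with $v=u_R\circ\text{scaling}$ and $|r|\le C\ell^2$, then apply the Cauchy–Schwarz / "$(a+b)^2\le(1+\delta)a^2+(1+\delta^{-1})b^2$" trick: the main term picks up the factor $(1+\delta)$ visible in \eqref{up-Eg-eq}, and the remainder is $(1+\delta^{-1})(\kappa H)^2\int_{Q_\ell(x_0)}|r|^2|v|^2\,dx\le(1+\delta^{-1})(\kappa H)^2 C\ell^4\int_{Q_\ell(x_0)}|v|^2\,dx$. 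Using $|v|\le1$ (from \eqref{up-u-1}) and $|Q_\ell(x_0)|=\ell^2$, this is $\le C\delta^{-1}(\kappa H)^2\ell^6$, so after dividing by $|Q_\ell(x_0)|=\ell^2$ it gives the $C\delta^{-1}\ell^4\kappa H\cdot\kappa H$ term. The factor $(1+\delta)$ multiplying the potential term $\tfrac12(1-|v|^2)^2$ is harmless since one can just bound $\tfrac12(1-|v|^2)^2\le(1+\delta)\tfrac12(1-|v|^2)^2$. For (ii), the replacement of $b(x)=\frac{H}{\kappa}|B_0(x)|$ by $b=\frac{H}{\kappa}|B_0(\widetilde x_0)|$ actually never happens because the test function is built with the constant $|B_0(\widetilde x_0)|$ from the start; any residual from $\curl\Fb=B_0$ being non-constant is already inside the $C\ell^2$ bound in \eqref{F-A}. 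I would finally collect the error terms $C\kappa^2\sqrt b/R$, $C\delta^{-1}\ell^4(\kappa H)^2$, and note $\sqrt b/R \le \sqrt{b}/(\ell\sqrt{\kappa H |B_0(\widetilde x_0)|}) = 1/(\ell\sqrt{\kappa H}\cdot\sqrt[4]{\cdots})$, bounding $\kappa^2\cdot\text{(this)}\le C\kappa^2/(\ell H^{1/2}\kappa^{1/2})\le C\kappa H/(\ell H) = C\kappa/\ell$ — valid since $H\le\kappa$ implies $\kappa^{3/2}/H^{1/2}\le \kappa^{3/2}\cdot\kappa^{-1/2}\cdot(\kappa/H)^{... }$; to be safe I would simply state the bound as $C(\ell H)^{-1}\kappa H$ as in \eqref{up-Eg-eq} and verify the inequality $\kappa^{3/2}H^{-1/2}\ell^{-1}\le C\kappa H\cdot(\ell H)^{-1}=C\kappa\ell^{-1}$ which holds iff $H^{1/2}\ge C^{-1}\kappa^{1/2}$, i.e. under $H\gtrsim\kappa$; if instead only \eqref{cond-H} holds one uses $\ell^2\kappa H\rho>1$ to get $R\gtrsim\rho^{-1/2}$ bounded below and absorbs differently. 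This bookkeeping of which error goes into which of the two displayed remainder terms is the only real subtlety; the geometric/scaling part is routine.
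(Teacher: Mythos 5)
Your reduction is exactly the paper's: the same gauge/affine approximation of $\Fb$ via \eqref{F-A}, the same $(1+\delta)$-splitting producing the $C\delta^{-1}(\kappa H)^{2}\ell^{6}$ remainder (which becomes $C\delta^{-1}\ell^{4}\kappa H\cdot\kappa H$ after dividing by $|Q_{\ell}(x_0)|=\ell^{2}$), the same rescaling $y=\frac{R}{\ell}(x-x_0)$ with $R=\ell\sqrt{\kappa H|B_{0}(\widetilde{x}_{0})|}$ turning the model energy into $\frac{1}{b}F^{+1}_{b,Q_{R}}(u_{R})=\frac{1}{b}e_{D}(b,R)$, and the same appeal to \eqref{eD<f(b)}. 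The one step you explicitly leave unresolved --- absorbing $\kappa^{2}\sqrt{b}/R$ into $C(\ell H)^{-1}\kappa H$ --- is where you go astray, and it is not actually delicate: you lose the estimate only because you discard the factor $\sqrt{b}$ (via $b\leq1$) and bound $R$ from below separately. Keeping the two together gives an exact cancellation,
\[
\frac{\sqrt{b}}{R}=\frac{\left(\tfrac{H}{\kappa}|B_{0}(\widetilde{x}_{0})|\right)^{1/2}}{\ell\left(\kappa H|B_{0}(\widetilde{x}_{0})|\right)^{1/2}}=\frac{1}{\ell\kappa}\,,
\qquad\text{hence}\qquad
\kappa^{2}\,\frac{\sqrt{b}}{R}=\frac{\kappa}{\ell}=\frac{1}{\ell H}\,\kappa H\,,
\]
which is precisely the first remainder term in \eqref{up-Eg-eq}; the factors of $|B_{0}(\widetilde{x}_{0})|$ and of $H$ cancel, so no condition of the form $H\gtrsim\kappa$ is needed, and the fallback you sketch (``$R\gtrsim\rho^{-1/2}$ \dots absorbs differently'') is both unnecessary and, as stated, insufficient. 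With this computation in place your argument closes and coincides with the paper's proof essentially line for line.
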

\begin{proof}
Let 
\begin{equation}\label{def-Rb}
R=\ell\sqrt{\kappa H |B_{0}(\widetilde{x}_{0})|}\qquad{\rm and}\qquad b=\frac{H}{\kappa}|B_{0}(\widetilde{x}_{0})|\,.
\end{equation}
We estimate $\mathcal E_{0}(w_{\ell,x_0,\widetilde{x}_{0}},\Fb,Q_{\ell}(x_0))$ from above. We write for any $\delta\in(0,1)$
\begin{align}
&\mathcal E_{0}(w_{\ell,x_0,\widetilde{x}_{0}},\Fb ,Q_{\ell}(x_0))\nonumber\\
&\qquad\qquad=\int_{Q_{\ell}(x_0)}\left[|(\nabla-i\kappa H \Fb)w_{\ell,x_0,\widetilde{x}_{0}}|^{2}+\frac{\kappa^2}{2}(1-|w_{\ell,x_0,\widetilde{x}_{0}}|^2)^2\right]\,dx\nonumber\\
&\qquad\qquad\leq(1+\delta)\int_{Q_{\ell}(x_0)}\left[|(\nabla-i\kappa H (\sigma_{\ell}|B_{0}(\widetilde{x}_{0})|\Ab_{0}(x-x_0)+\nabla\varphi_{x_{0},\widetilde{x}_{0}}(x)))w_{\ell,x_0,\widetilde{x}_{0}}|^{2}\right.\nonumber\\
&\qquad\qquad\qquad\qquad\qquad\qquad\qquad\qquad\qquad\qquad\qquad\qquad\qquad\qquad\left.+\frac{\kappa^2}{2}(1-|w_{\ell,x_0,\widetilde{x}_{0}}|^2)^2\right]dx\nonumber\\
&\qquad\qquad\qquad+C(\kappa H)^{2}\delta^{-1}\int_{Q_{\ell}(x_0)}\left|\Fb- (\sigma_{\ell}|B_{0}(\widetilde{x}_{0})|\Ab_{0}(x-x_0)+\nabla\varphi_{x_{0},\widetilde{x}_{0}}(x))w_{\ell,x_0,\widetilde{x}_{0}}\right|^{2}\,dx\nonumber\\
&\qquad\qquad\leq(1+\delta)\mathcal E_{0}\big(w_{\ell,x_0,\widetilde{x}_{0}},\,\sigma_{\ell}|B_{0}(\widetilde{x}_{0})|\Ab_{0}(x-x_0)+\nabla\varphi_{x_{0},\widetilde{x}_{0}}(x),\,Q_{\ell}(x_0)\big)+C\delta^{-1}\ell^{6}(\kappa H)^{2}\label{1up-loc-en}.
\end{align}
Using \eqref{nE=pE}, the definition of $w_{\ell,x_0,\widetilde{x}_{0}}$ and the change of variable $y=\frac{R}{\ell}(x-x_0)$, we obtain
\begin{align}\label{2up-loc-en}
&\mathcal E_{0}\big(w_{\ell,x_0,\widetilde{x}_{0}},\,\sigma_{\ell}|B_{0}(\widetilde{x}_{0})|\Ab_{0}(x-x_0)+\nabla\varphi_{x_{0},\widetilde{x}_{0}}(x),\,Q_{\ell}(x_0)\big)\nonumber\\
&=\int_{Q_{R}}\left[\left|\left(\frac{R}{\ell}\nabla_{y}-i\frac{R}{\ell}\Ab_{0}(y)\right)u_{R}(y)\right|^{2}+\frac{\kappa^2}{2}\left(1-\left|u_{R}(y)\right|^2\right)^2\right]\frac{\ell^2}{R^2} \,dy\nonumber\\
&=\frac{1}{b} F^{+1}_{b,Q_{R}}(u_{R})\,.
\end{align}
Since $u_{R}\in H^{1}_{0}(Q_{R})$ is a minimizer of $ F^{+1}_{b,Q_{R}}$, then
\begin{equation}\label{F=eD}
F^{+1}_{b,Q_{R}}(u_{R})=e_{D}(b,R)\,.
\end{equation}
Proposition~\ref{FR} tells us that $\,\,\,\displaystyle\frac{e_{D}(b,R)}{R^{2}}\leq \hat{f}(b)+C\frac{\sqrt{b}}{R}$ for all $b\in]0,1[$ and $R\geq 1$.
This assumption is satisfied because $R\geq \ell\sqrt{\kappa H \rho}>1$ (see Remark~\ref{choice-ell1}). Therefore, we get from \eqref{2up-loc-en} and \eqref{F=eD} the estimate
\begin{equation}\label{3up-loc-en}
\mathcal E_{0}(w_{\ell,x_0,\widetilde{x}_{0}},\sigma_{\ell}|B_{0}(\widetilde{x}_{0})|\Ab_{0}(x-x_0)+\nabla\varphi_{x_{0},\widetilde{x}_{0}}(x),Q_{\ell}(x_0))\leq R^{2}\frac{\hat{f}(b)}{b}+C\frac{R}{\sqrt{b}}\,,
\end{equation}
with $b$ defined in \eqref{def-Rb}.\\
We get by collecting the estimates in \eqref{1up-loc-en}-\eqref{2up-loc-en} that,
\begin{equation}\label{up-E0}
\mathcal{E}_{0}(w_{\ell,x_0,\widetilde{x}_{0}},\Fb,Q_{\ell}(x_0))\leq (1+\delta) R^{2}\frac{\hat{f}(b)}{b}+C_{1}\frac{R}{\sqrt{b}}+C_{2}\delta^{-1}\ell^{6}(\kappa H)^{2}\,.
\end{equation}
Remembering the definition of $b$ and $R$ in \eqref{def-Rb}, we get
$$
\frac{1}{|Q_{\ell}(x_0)|}\mathcal{E}_{0}(w_{\ell,x_0,\widetilde{x}_{0}},\Fb,Q_{\ell}(x_0))\leq (1+\delta)\kappa^{2} \hat{f}\left(\frac{H}{\kappa}|B_{0}(\widetilde{x}_{0})|\right)+C\left(\frac{\kappa}{\ell}+\delta^{-1}\ell^{4}(\kappa H)^{2}\right)\,,
$$
which finishes the proof of Proposition~\ref{pp-up-Eg}.
\end{proof}
\begin{rem}\label{choice-ell1}
We select $\ell,\delta$ and $\rho$ as follow:
\begin{equation}\label{choice-ell-rho}
\ell=\left(\kappa H\right)^{-\frac{1}{4}}\,,\qquad \rho=\left(\kappa H\right)^{-\frac{1}{3}}\,.
\end{equation}
and
\begin{equation}\label{choice-delta}
\delta=\left(\ln\frac{\kappa}{H}\right)^{-\frac{1}{4}}
\end{equation}
Under Assumption~\eqref{cond-H}, this choice permits us to verify the assumptions in Proposition~\ref{pp-up-Eg} and to obtain error terms of order $\textit{o}\left(\kappa H \ln\frac{\kappa}{H}\right)$. We have indeed  as $\kappa\longrightarrow+\infty$
$$\frac{\kappa}{\ell\kappa H\ln\frac{\kappa}{H}}=\frac{\kappa^{\frac{1}{4}}}{  H^\frac{3}{4}\ln\frac{\kappa}{H}}\ll 1\,,$$
$$\frac{\delta^{-1}(\kappa H)^{2}\ell^{4}}{\kappa H \ln\frac{\kappa}{H}}=\frac{1}{\left(\ln\frac{\kappa}{H}\right)^{\frac{3}{4}}}\ll 1\,,$$
$$\ell(\kappa H)^\frac{1}{2} \rho^\frac{1}{2}=\left(\kappa H\right)^{\frac{1}{12}}\gg1\,.$$
\end{rem}
\begin{theorem}\label{up-Eg}
Under Assumption~\eqref{B(x)}, if \eqref{cond-H} holds, then, the ground state energy $\E0(\kappa,H)$ in \eqref{eq-2D-gs} satisfies
\begin{equation}\label{up-Eg-eq1}
\E0(\kappa,H)\leq \kappa^{2}\int_{\Omega}  \hat{f}\left(\frac{H}{\kappa}|B_{0}(x)|\right)\,dx+\textit{o}\left(\kappa H\ln\frac{\kappa}{H}\right)\,,\quad\text{as}~\kappa\longrightarrow+\infty\,.
\end{equation}
\end{theorem}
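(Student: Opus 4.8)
The plan is to construct an explicit trial configuration and compare its energy with the claimed bound. First I would take $\Ab=\Fb$: since $\curl\Fb=B_{0}$, the magnetic term in \eqref{eq-2D-GLf} drops out, so $\mathcal E_{\kappa,H}(\psi,\Fb)=\mathcal E_{0}(\psi,\Fb;\Omega)$ and it suffices to find $\psi\in H^{1}(\Omega;\C)$ with $\mathcal E_{0}(\psi,\Fb;\Omega)$ below the right-hand side of \eqref{up-Eg-eq1}. I fix $\ell,\rho,\delta$ as in \eqref{choice-ell-rho}--\eqref{choice-delta}, extend $B_{0}$ and $\Fb$ smoothly to a fixed neighbourhood of $\overline{\Omega}$ (keeping the same notation, so that squares meeting $\partial\Omega$ can be handled on the same footing — the phase estimate \eqref{F-A} only needs $\curl\Fb=B_{0}$ and Lipschitz bounds), and tile $\R^{2}$ by the lattice of squares of side $\ell$. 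I call a lattice square $Q$ \emph{good} when $\overline Q\cap\Omega\neq\emptyset$ and $\overline Q\subset\{|B_{0}|>\rho\}$; let $\mathcal G$ be the union of the good squares and $\mathcal B=\Omega\setminus\mathcal G$. On each good $Q=Q_{\ell}(x_{0})$ I put $\psi=w_{\ell,x_{0},\widetilde{x}_{0}}$ restricted to $Q\cap\Omega$, where $\widetilde{x}_{0}\in\overline Q\cap\overline\Omega$; since $u_{R}\in H^{1}_{0}(Q_{R})$, the functions $w_{\ell,x_{0},\widetilde{x}_{0}}$ vanish on $\partial Q$ and therefore glue into an $H^{1}(\Omega)$ map (no boundary condition is imposed on $\psi$). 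On $\mathcal B$ I use the separate construction below.

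Over $\mathcal G$ I sum Proposition~\ref{pp-up-Eg}, whose hypotheses hold on every good square by \eqref{choice-ell-rho}--\eqref{choice-delta} and \eqref{cond-H} (in particular $\ell^{2}\kappa H\rho=(\kappa H)^{1/6}>1$, cf.\ Remark~\ref{choice-ell1}). Using $\mathcal E_{0}(w_{\ell,x_{0},\widetilde{x}_{0}},\Fb;Q\cap\Omega)\le\mathcal E_{0}(w_{\ell,x_{0},\widetilde{x}_{0}},\Fb;Q)$ (the energy density is nonnegative) and that the good squares have total area $\mathcal O(1)$, one gets
\[
\mathcal E_{0}(\psi,\Fb;\mathcal G)\le(1+\delta)\,\kappa^{2}\sum_{Q\ \mathrm{good}}|Q|\,\hat f\!\Big(\tfrac{H}{\kappa}|B_{0}(\widetilde{x}_{0})|\Big)+C\Big(\tfrac{\kappa}{\ell}+\delta^{-1}\ell^{4}(\kappa H)^{2}\Big),
\]
the last term being $\textit{o}(\kappa H\ln\frac{\kappa}{H})$ by Remark~\ref{choice-ell1}. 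Next, $x\mapsto\hat f\big(\tfrac{H}{\kappa}|B_{0}(x)|\big)$ is uniformly continuous (on $[0,1]$ $\hat f$ is uniformly continuous, and $\hat f(b)\le Cb\ln\tfrac1b$ for $b$ small by Theorem~\ref{pro-f(b)}(4)) with modulus of continuity governed by $\|\nabla B_{0}\|_{\infty}$ and $\mathrm{diam}\,Q=\ell\sqrt2$, so the Riemann sum converges and the squares protruding from $\Omega$ add only $\kappa^{2}\cdot\mathcal O\big(\tfrac{H}{\kappa}\ell\ln\tfrac{\kappa}{H}\big)=\textit{o}(\kappa H\ln\frac{\kappa}{H})$ (since $\tfrac{H}{\kappa}|B_{0}|\ll1$); hence
\[
\kappa^{2}\sum_{Q\ \mathrm{good}}|Q|\,\hat f\!\Big(\tfrac{H}{\kappa}|B_{0}(\widetilde{x}_{0})|\Big)\le\kappa^{2}\int_{\Omega}\hat f\!\Big(\tfrac{H}{\kappa}|B_{0}(x)|\Big)\md x+\textit{o}(\kappa H\ln\tfrac{\kappa}{H}).
\]
The spare factor $1+\delta$ is then absorbed, since $\tfrac{H}{\kappa}|B_{0}|\ll1$ and Theorem~\ref{pro-f(b)}(4) yield $\kappa^{2}\int_{\Omega}\hat f\big(\tfrac{H}{\kappa}|B_{0}|\big)\md x=\mathcal O(\kappa H\ln\frac{\kappa}{H})$, whence $\delta\,\kappa^{2}\int_{\Omega}\hat f(\cdots)\md x=\textit{o}(\kappa H\ln\frac{\kappa}{H})$.

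It remains to estimate $\mathcal E_{0}(\psi,\Fb;\mathcal B)$. By \eqref{B(x)}, $\Gamma$ is a finite union of smooth curves and $\{|B_{0}|\le\rho\}$ sits in a $C\rho$-neighbourhood of $\Gamma$, so $\mathcal B\subset\{\dist(\cdot,\Gamma)\le C(\rho+\ell)\}\cap\Omega$; in particular $|\mathcal B|\le C\ell$ and $|B_{0}|\le C\ell$ on $\mathcal B$. On this thin strip $\curl\Fb=B_{0}=\mathcal O(\ell)$, so I can choose a phase $\varphi$ with $|\Fb-\nabla\varphi|\le C\ell^{2}$ on $\mathcal B$ (match $\nabla\varphi$ to $\Fb$ along a curve through each component of $\mathcal B$, then integrate the curl transversally; on a component encircling a loop of $\Gamma$, first round the circulation $\oint\Fb\cdot\md s$ to a multiple of $2\pi/(\kappa H)$, which perturbs $\nabla\varphi$ only by $\mathcal O((\kappa H\ell)^{-1})$), and set $\psi=\chi\,e^{i\kappa H\varphi}$ on $\mathcal B$ with $\chi\in H^{1}_{0}(\mathcal B)$ equal to $1$ off a layer of width $(\kappa H)^{-1/2}$ adjacent to $\partial\mathcal B\cap\Omega$ in which it drops to $0$ (so $\psi$ matches the $w_{\ell,x_{0},\widetilde{x}_{0}}$, which vanish on $\partial\mathcal G$). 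Where $\chi\equiv1$ the energy density equals $(\kappa H)^{2}|\Fb-\nabla\varphi|^{2}=\mathcal O((\kappa H)^{2}\ell^{4})$, contributing $\mathcal O((\kappa H)^{2}\ell^{5})=\mathcal O((\kappa H)^{3/4})$; on the transition layer, of measure $\mathcal O((\kappa H)^{-1/2})$, the density is $\mathcal O(|\nabla\chi|^{2}+(\kappa H)^{2}\ell^{4}+\kappa^{2})=\mathcal O(\kappa^{2})$, contributing $\mathcal O(\kappa^{2}(\kappa H)^{-1/2})=\textit{o}(\kappa H\ln\frac{\kappa}{H})$ under \eqref{cond-H}. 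As $\kappa^{2}\int_{\mathcal B}\hat f\big(\tfrac{H}{\kappa}|B_{0}|\big)\md x\ge0$, this gives $\mathcal E_{0}(\psi,\Fb;\mathcal B)\le\kappa^{2}\int_{\mathcal B}\hat f\big(\tfrac{H}{\kappa}|B_{0}|\big)\md x+\textit{o}(\kappa H\ln\frac{\kappa}{H})$; adding it to the bound on $\mathcal G$ gives \eqref{up-Eg-eq1}.

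I expect the real obstacle to be the construction on $\mathcal B$, and it is here that the lower bound on $H$ in \eqref{cond-H} is genuinely spent: letting $|\psi|$ fall on a set of measure $\approx\ell$ would cost $\approx\kappa^{2}\ell$, which the error budget $\textit{o}(\kappa H\ln\frac{\kappa}{H})$ cannot absorb once $H\approx\kappa^{1/3}$, so one must keep $|\psi|\equiv1$ with a nearly flat phase, using crucially that $B_{0}=\mathcal O(\ell)$ near $\Gamma$. Everything else — the summation of Proposition~\ref{pp-up-Eg} and the Riemann-sum limit — is routine.
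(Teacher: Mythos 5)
Your construction agrees with the paper's on the admissible squares: tile by a lattice of side $\ell=(\kappa H)^{-1/4}$, take $\Ab=\Fb$ so the magnetic term vanishes, plant the rescaled Dirichlet minimizers $w_{\ell,x_0,\widetilde x_0}$ via Proposition~\ref{pp-up-Eg}, and pass from the sum to $\kappa^{2}\int_\Omega\hat f$ by a Riemann-sum argument, absorbing the $(1+\delta)$ because the leading term is $\mathcal O(\kappa H\ln\frac{\kappa}{H})$ and $\delta\to0$. (For the Riemann sum, rather than invoking uniform continuity of $\hat f$ — which Theorem~\ref{pro-f(b)} only provides qualitatively — do what the paper does: pick $\widetilde x_0$ minimizing $|B_0|$ over $\overline{Q}\cap\overline{\Omega}$ and use monotonicity, so the lower Riemann sum sits below the integral at no cost.) Where you genuinely depart from the paper is on $\Omega\setminus\mathcal G$. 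The paper extends its test function $s$ by $0$ outside the union $\mathcal D_{\ell,\rho}$ of admissible squares, and the identity \eqref{sumE1} then silently discards the contribution $\frac{\kappa^{2}}{2}|\Omega\setminus\mathcal D_{\ell,\rho}|$ of the potential term on the uncovered set; since $|\Omega\setminus\mathcal D_{\ell,\rho}|\approx\ell+\rho$, this is of order $\kappa^{2}(\kappa H)^{-1/4}=\kappa^{7/4}H^{-1/4}$, which is $\textit{o}(\kappa H\ln\frac{\kappa}{H})$ only when $H\gg\kappa^{3/5}(\ln\frac{\kappa}{H})^{-4/5}$ — essentially the strengthened condition \eqref{cond-H2}, not \eqref{cond-H}. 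Your closing remark identifies exactly this obstruction, and your remedy — a unimodular, almost-pure-gauge configuration $\chi e^{i\kappa H\varphi}$ on the $C\ell$-neighbourhood of $\Gamma$, exploiting $|B_0|=\mathcal O(\ell)$ there to achieve $|\Fb-\nabla\varphi|\leq C\ell^{2}$, with a transition layer of width $(\kappa H)^{-1/2}$ costing $\mathcal O(\kappa^{3/2}H^{-1/2})$, which is $\textit{o}(\kappa H\ln\frac{\kappa}{H})$ precisely down to $H\approx\kappa^{1/3}$ — together with letting good squares protrude past $\partial\Omega$ after extending $B_0$ and $\Fb$, closes the estimate on the whole range \eqref{cond-H}. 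So your route buys the theorem as stated, whereas the paper's displayed computation only yields it under the more restrictive regime.

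What still needs to be written out in your version, though all of it is routine: (i) the gauge construction on $\mathcal B$, namely the bound $|\Fb-\nabla\varphi|\leq C\ell^{2}$ in a curved tubular neighbourhood of each component of $\Gamma$ (integrate $\curl\Fb=B_0=\mathcal O(\ell)$ across a transversal of length $\mathcal O(\ell)$) and, for closed components, the adjustment of the circulation by a multiple of $2\pi/(\kappa H)$ so that $e^{i\kappa H\varphi}$ is single-valued — your perturbation is $\mathcal O((\kappa H)^{-1})\ll\ell^{2}$, so it is harmless; (ii) the fact that \eqref{F-A}, hence Proposition~\ref{pp-up-Eg}, remains valid for squares protruding from $\Omega$ once $\Fb$ is extended with $\curl\Fb=B_0$ in a neighbourhood of $\overline{\Omega}$; (iii) the $H^1$ matching at the interfaces, which works because the $w$'s have zero trace on the square boundaries and $\chi$ vanishes on $\partial\mathcal B\cap\Omega$.
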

\begin{proof}
Let $\ell\in(0,1)$, $\delta$ and $\rho$ be the parameters depending on $\kappa$ and chosen as in Remark~\ref{choice-ell1}.\\
As we did in the previous paper \cite[Proposition~5.1]{KA}, we consider the lattice $\Gamma_{\ell}:=\ell\Z\times\ell\Z$ and write, for $\gamma, \widetilde{\gamma}\in\Gamma_{\ell}\,$,
$$
 Q_{\gamma,\ell}=Q_{\ell}(\gamma)\mbox{ and }w_{\ell,x_0,\widetilde{x}_{0}}=w_{\ell,\gamma,\widetilde{\gamma}}\,.$$
 For any $\gamma\in \Gamma_{\ell}$ such that $Q_{\gamma,\ell}$ is $\rho$-admissible square, let
\begin{equation}\label{inf-B}
\underline{B}_{\gamma,\ell}=\inf_{x\in Q_{\gamma,\ell}}|B_{0}(x)|
\end{equation}
and
\begin{equation}\label{def-card-squares}
\mathcal I_{\ell, \rho}=\left\{\gamma; \,\overline{Q_{\gamma, \ell}}\subset \Omega\cap\left\{|B_{0}|>\rho\right\}\right\}\,,\qquad N={\rm card}~ \mathcal I_{\ell, \rho}\,.
\end{equation}
Then as $\kappa\rightarrow+\infty$, we have:
\begin{equation}
N=|\Omega|\ell^{-2}+\mathcal{O}(\ell^{-1})+\mathcal O(\rho\ell^{-2}).
\end{equation}
For all $x\in\Omega$, we define,
\begin{equation}
s(x)=
\sum_{\gamma\in\mathcal{J}_{\ell,\rho}} \,w_{\ell,\gamma,\widetilde{\gamma}}(x)\,,
\end{equation}
where $w_{\ell,\gamma,\widetilde{\gamma}}$ has been extended by $0$ outside of $Q_{\gamma,\ell}$. Remember the functional $\mathcal{E}_{\kappa, H}$ in \eqref{eq-2D-GLf}. We compute the energy of the test configuration $(s,\Fb)$. Since $\curl \Fb=B$, we get,
\begin{align}\label{sumE1}
\mathcal{E}_{\kappa,H}(s,\Fb,\Omega)=\sum_{\gamma\in\mathcal{J}_{\ell,\rho}}\mathcal{E}_{0}(w_{\ell,\gamma,\widetilde{\gamma}},\Fb,Q_{\gamma, \ell})\,.
\end{align}
Recall that for any $\widetilde{\gamma}\in Q_{\gamma,\ell}$, $B_{0}(\widetilde{\gamma})$ satisfies \eqref{F-A}. Then, we select $\widetilde{\gamma}\in Q_{\gamma,\ell}$ such that 
$$|B_{0}(\widetilde{\gamma})|=\underline{B}_{\gamma,\ell}\,.$$ Using Proposition~\ref{pp-up-Eg} and noticing that $|Q_{\gamma,\ell}|=\ell^2$, we get for any $\delta\in(0,1)$
\begin{equation}\label{sumE2}
\sum_{\gamma\in\mathcal{J}_{\ell,\rho}}\mathcal{E}_{0}(w_{\ell,\gamma,\widetilde{\gamma}},\Fb,Q_{\gamma, \ell})\leq \kappa^{2}(1+\delta)\sum_{\gamma\in\mathcal{J}_{\ell,\rho}} \hat{f}\left(\frac{H}{\kappa}\underline{B}_{\gamma,\ell}\right)\ell^{2}+r(\kappa,H,\ell)\,,
\end{equation}
where 
\begin{equation}\label{remainder}
r(\kappa,H,\ell)=\mathcal{O}\left(\frac{\kappa}{\ell}+\delta^{-1}\ell^{4}(\kappa H)^{2}\right)\,.
\end{equation}
Having in mind Property~(3) of the function $\hat{f}$ established in Theorem~\ref{pro-f(b)}, we recognize the lower Riemann sum and notice that $\cup_{\gamma\in \mathcal{J}_{\ell, \rho}}Q_{\gamma, \ell}\subset\Omega$, then, we get by collecting \eqref{sumE1}-\eqref{sumE2} that
\begin{equation}\label{final-est}
\mathcal{E}_{\kappa,H}(s,\Fb,\Omega)\leq(1+\delta)\kappa^{2}\int_{\Omega} \hat{f}\left(\frac{H}{\kappa}|B_{0}(x)|\right)\,dx+r(\kappa,H,\ell)\,.
\end{equation}
The choice of the parameters $\delta$ in \eqref{choice-delta} and $\ell$ in \eqref{choice-ell-rho} implies that all error terms are of lower order compared to $\kappa H\ln\frac{\kappa}{H}$.\\
\end{proof}

\begin{rem}\label{R1}
The remainder term in \eqref{remainder} is small compared with the leading order term. We have, for any $\rho_{0}>0$
\begin{align*}
\kappa^{2}\int_{\Omega}  \hat{f}\left(\frac{H}{\kappa}|B_{0}(x)|\right)\,dx&\geq \kappa^{2}\int_{\Omega\cap\{|B_{0}|>\rho_{0}\}}  \hat{f}\left(\frac{H}{\kappa}|B_{0}(x)|\right)\,dx\\
&\geq \kappa^{2}\hat{f}\left(\frac{H}{\kappa}\rho_{0}\right)|\Omega\cap\{|B_{0}|>\rho_{0}\}|\,.
\end{align*}
In view of \eqref{main-f(b)}, for all positive constant $C$ there exists $\rho_{0}>0$ such that if $H\leq C\kappa$ and $\frac{\rho_{0}}{C_{1}}$ is sufficiently small for some $C_{1}>0$, then as $\kappa\longrightarrow+\infty$
$$\kappa^{2}\int_{\Omega}  \hat{f}\left(\frac{H}{\kappa}|B_{0}(x)|\right)\,dx\geq C_{2}\frac{\kappa H\rho_{0}}{2}\ln\frac{\kappa}{H\rho_{0}}(1+\textit{o}(1))\,,$$
where $C_{2}$ is a positive constant.\\
In particular, when \eqref{cond-H} is satisfied, we see that,
\begin{equation}
r(\kappa, H,\ell)\ll\kappa^{2}\int_{\Omega}  \hat{f}\left(\frac{H}{\kappa}|B_{0}(x)|\right)\,dx\,.
\end{equation}
\end{rem}
\section{A priori estimates of minimizers}
The aim of this section is to give a priori estimates on the solutions of the Ginzburg-Landau equations \eqref{eq-2D-GLeq}. These estimates play an essential role in controlling the error resulting from various approximations.
The starting point is the following $L^{\infty}$-bound resulting from the maximum principle. If $(\psi, \Ab)\in H^{1}(\Omega;\C)\times H^{1}_{\rm div}(\R^{2})$ is a solution of \eqref{eq-2D-GLeq}, then 
\begin{equation}\label{est-psi}
\|\psi\|_{L^{\infty}(\Omega)}\leq 1\,.
\end{equation}
Next we prove an estimate on the induced magnetic potential.
\begin{prop}\label{pr-est}
Suppose that the magnetic field $H$ is a function of $\kappa$ and satisfies \eqref{cond-H}. Let $\alpha\in(0,1)$. There exist positive constants $\kappa_{0}$ and $C$ such that, if $\kappa\geq\kappa_{0}$ and $(\psi, \Ab)$ is a minimizer of \eqref{eq-2D-GLf}, then
$$\|\Ab-\Fb\|_{H^{2}(\Omega)}\leq \frac{C}{H}\left(\int_{\Omega}  \hat{f}\left(\frac{H}{\kappa}|B_{0}(x)|\right)\,dx\right)^{\frac{1}{2}}\,,$$
$$\|\Ab-\Fb\|_{C^{0,\alpha}(\overline{\Omega})}\leq  \frac{C}{H}\left(\int_{\Omega}  \hat{f}\left(\frac{H}{\kappa}|B_{0}(x)|\right)\,dx\right)^{\frac{1}{2}}\,.$$
Here $\Fb$ is the magnetic potential introduced in \eqref{div-curlF}.
\end{prop}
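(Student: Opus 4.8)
The plan is to estimate $\Ab-\Fb$ by exploiting the second Ginzburg--Landau equation, which governs $\curl(\Ab-\Fb)$, together with elliptic regularity for the $\curl$--$\Div$ system on the space $H^1_{\Div}(\Omega)$. First I would record the two pieces of information coming from the fact that $(\psi,\Ab)$ is a minimizer: the pointwise bound $\|\psi\|_{L^\infty(\Omega)}\le 1$ from \eqref{est-psi}, and the energy bound $\mathcal E_{\kappa,H}(\psi,\Ab)\le\mathcal E_{\kappa,H}(\Fb\text{-test configuration})$. From the latter, combined with the upper bound already established in Theorem~\ref{up-Eg} (or even the cruder bound obtained by testing with a constant $\psi$), one gets in particular that the magnetic term satisfies
$$
(\kappa H)^2\int_\Omega|\curl(\Ab-\Fb)|^2\,dx\le \mathcal E_{\kappa,H}(\psi,\Ab)\le \kappa^2\int_\Omega\hat f\!\left(\tfrac{H}{\kappa}|B_0(x)|\right)dx + \textit{o}(\kappa H\ln\tfrac\kappa H)\,,
$$
and, absorbing the error term into the leading term by Remark~\ref{R1}, that
$$
\|\curl(\Ab-\Fb)\|_{L^2(\Omega)}\le \frac{C}{H}\left(\int_\Omega\hat f\!\left(\tfrac{H}{\kappa}|B_0(x)|\right)dx\right)^{1/2}\,.
$$

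Next I would upgrade this $L^2$ control of $\curl(\Ab-\Fb)$ to the stated $H^2$ and $C^{0,\alpha}$ bounds. The key structural fact is that $\Ab-\Fb\in H^1_{\Div}(\Omega)$, so $\Div(\Ab-\Fb)=0$ in $\Omega$ and $(\Ab-\Fb)\cdot\nu=0$ on $\partial\Omega$; by the standard div-curl elliptic estimate on a smooth bounded domain there is a constant $C=C(\Omega)$ with
$$
\|\Ab-\Fb\|_{H^1(\Omega)}\le C\|\curl(\Ab-\Fb)\|_{L^2(\Omega)}\,.
$$
To reach $H^2$, differentiate the second equation in \eqref{eq-2D-GLeq}: setting $a=\Ab-\Fb$, one has $-\nabla^\perp\curl a=\tfrac1{\kappa H}\IM(\overline\psi(\nabla-i\kappa H\Ab)\psi)=:\tfrac1{\kappa H}\,j$, the (rescaled) superconducting current. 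Applying $\curl$ and using $\Div a=0$, $a$ solves an elliptic boundary value problem $-\Delta a = \tfrac1{\kappa H}\,\nabla^\perp\!\cdot j$ type relation with the boundary conditions above, so elliptic regularity gives $\|a\|_{H^2(\Omega)}\le \tfrac{C}{\kappa H}\|j\|_{L^2(\Omega)} + C\|a\|_{L^2}$. The term $\|j\|_{L^2}$ is controlled using $|\psi|\le 1$ and $\|(\nabla-i\kappa H\Ab)\psi\|_{L^2}^2\le \mathcal E_{\kappa,H}(\psi,\Ab)$ by Cauchy--Schwarz, giving $\|j\|_{L^2}\le \big(\mathcal E_{\kappa,H}(\psi,\Ab)\big)^{1/2}\le \kappa\big(\int_\Omega\hat f\big)^{1/2}(1+\textit{o}(1))$; hence $\tfrac1{\kappa H}\|j\|_{L^2}\le \tfrac{C}{H}\big(\int_\Omega\hat f\big)^{1/2}$, which is exactly the claimed order. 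The $C^{0,\alpha}$ bound then follows from the Sobolev embedding $H^2(\Omega)\hookrightarrow C^{0,\alpha}(\overline\Omega)$ in two dimensions (for any $\alpha\in(0,1)$).

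The main technical obstacle is the bootstrap in the $H^2$ step: because the current $j$ itself involves $\Ab$ through the covariant derivative, one must be careful that the constant does not pick up uncontrolled powers of $\kappa H$. The clean way around this is to first establish the $H^1$ (and hence, by Sobolev, $L^p$ for all $p<\infty$ in 2D) bound on $a=\Ab-\Fb$, then write $(\nabla-i\kappa H\Ab)\psi=(\nabla-i\kappa H\Fb)\psi-i\kappa H\,a\psi$ and note that the energy directly bounds $\|(\nabla-i\kappa H\Ab)\psi\|_{L^2}$, so $\|j\|_{L^2}\le \|\psi\|_{L^\infty}\|(\nabla-i\kappa H\Ab)\psi\|_{L^2}$ needs no further expansion at all — $j$ is estimated using the \emph{induced} potential $\Ab$ throughout, never $\Fb$. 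With that observation the estimate closes in one pass: $\|a\|_{H^2}\lesssim \tfrac1{\kappa H}\|j\|_{L^2}+\|a\|_{L^2}\lesssim \tfrac{C}{H}\big(\int_\Omega\hat f\big)^{1/2}$, using Remark~\ref{R1} once more to see that the lower-order $\|a\|_{L^2}$ contribution (already $\le \tfrac{C}{H}(\int\hat f)^{1/2}$ from the first step) does not dominate. This matches the statement of Proposition~\ref{pr-est} and completes the argument.
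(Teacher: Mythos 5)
Your proposal is correct and follows essentially the same route as the paper: bound $\|\curl(\Ab-\Fb)\|_{L^2}$ and $\|(\nabla-i\kappa H\Ab)\psi\|_{L^2}$ by the energy upper bound of Theorem~\ref{up-Eg} combined with Remark~\ref{R1}, use the second Ginzburg--Landau equation together with $|\psi|\le 1$ to control the gradient of $\curl(\Ab-\Fb)$, and conclude by the curl--div elliptic estimate for divergence-free fields tangent to the boundary, followed by the Sobolev embedding $H^2(\Omega)\hookrightarrow C^{0,\alpha}(\overline\Omega)$. Your closing observation that no bootstrap is needed because the current is estimated with the induced potential $\Ab$ throughout is exactly the point that makes the paper's one-pass argument work.
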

\begin{proof}
The estimate in $C^{0,\alpha}$-norm is a consequence of the continuous Sobolev embedding of $H^{2}(\Omega)$ in $C^{0,\alpha}(\overline{\Omega})$.\\
It is easy to show that
\begin{equation}\label{est-curla1}
\|\curl(\Ab-\Fb)\|_{L^{2}(\Omega)}\leq\frac{1}{\kappa H}\E0(\kappa,H)^{\frac{1}{2}}\,,
\end{equation}
and
\begin{equation}\label{est-grad1}
\|(\nabla-i\kappa H\Ab)\psi\|_{L^{2}(\Omega)}\leq\E0(\kappa,H)^{\frac{1}{2}}\,.
\end{equation}
Notice that under Assumption~\eqref{cond-H}, it follows from Theorem~\ref{up-Eg} and Remark~\ref{R1} that
\begin{equation}\label{est-curla}
\|\curl(\Ab-\Fb)\|_{L^{2}(\Omega)}\leq\frac{1}{H}\left(\int_{\Omega}  \hat{f}\left(\frac{H}{\kappa}|B_{0}(x)|\right)\,dx\right)^{\frac{1}{2}}\,,
\end{equation}

\begin{equation}\label{est-grad}
\|(\nabla-i\kappa H\Ab)\psi\|_{L^{2}(\Omega)}\leq \kappa\left(\int_{\Omega}  \hat{f}\left(\frac{H}{\kappa}|B_{0}(x)|\right)\,dx\right)^{\frac{1}{2}}\,.
\end{equation}
Let $a=\Ab-\Fb$. We will prove that $$\|a\|_{H^{2}(\Omega)}\leq\frac{C}{H}\left(\int_{\Omega}  \hat{f}\left(\frac{H}{\kappa}|B_{0}(x)|\right)\,dx\right)^{\frac{1}{2}}\,.$$ Since ${\rm div} a=0$ and $a\cdot\nu=0$ on $\partial \Omega$, we get by regularity of the curl-div system see \cite[ Appendix~A.5]{SB}
\begin{equation}\label{est-curl-div}
\|a\|_{H^{2}(\Omega)}\leq C' \|\curl a\|_{H^{1}(\Omega)}\,.
\end{equation}
The second equation in \eqref{eq-2D-GLeq} reads as follows:
$$-\nabla^{\perp} {\rm curl}\,a=\frac{1}{\kappa H}{\rm Im}(\overline{\psi}(\nabla-i\kappa H\Ab)\psi)\,.$$
The estimates in \eqref{est-psi} and the bound in \eqref{est-curl-div}, give us
$$
\|a\|_{H^{2}(\Omega)}\leq C\left(\|\curl a\|_{L^{2}(\Omega)}+\frac{1}{\kappa H}\|(\nabla-i\kappa H\Ab)\psi\|_{L^{2}(\Omega)}\right)\,.
$$
Inserting the estimates in \eqref{est-curla} and \eqref{est-grad} into this upper bound finishes the proof of the proposition.
\end{proof}
\section{Proof of Theorem~\ref{th-loc-enrgy}: Lower bound}\label{section5}
In this section, we suppose that $\mathcal{D}$ is an open set with smooth boundary such that $\overline{\mathcal{D}}\subset\Omega$. We will give a lower bound of the energy $\mathcal{E} (\psi,\Ab;\mathcal{D})$ introduced in \eqref{eq-GLen-D}, when $(\psi,\Ab)$ is a minimizer of the functional in \eqref{eq-2D-GLf}.

\subsection*{Construction of a gauge transformation:}~\\
Let $\phi_{x_{0}}(x)=(\Ab(x_0)-\Fb(x_0))\cdot x$, where $\Fb$ is the magnetic potential introduced in \eqref{div-curlF} and $(\ell,x_0)$
 a $\rho$-admissible pair. Choosing  $\alpha\in (0,1)$ and using the estimate of $\|\Ab-\Fb\|_{C^{0,\alpha}(\Omega)}$ given in Proposition~\ref{pr-est}, we get for all $x\in Q_{\ell}(x_0)$,
\begin{align}
|\Ab(x)-\nabla\phi_{x_0}-\Fb(x)|&=|(\Ab-\Fb)(x)-(\Ab-\Fb)(x_0)|\nonumber\\
&\leq\|\Ab-\Fb\|_{C^{0,\alpha}(\Omega)}|x-x_0|^{\alpha}\nonumber\\
& \leq C_\alpha \, \lambda\, \ell^{\alpha}\,,\label{alpha}
\end{align}
where 
\begin{equation}\label{lamda}
\lambda=\frac{1}{H}\left(\int_{\Omega}  \hat{f}\left(\frac{H}{\kappa}|B_{0}(x)|\right)\,dx\right)^{\frac{1}{2}}\,.
\end{equation}
Using \eqref{main-f(b)}, it is clear that, under condition \eqref{cond-H} 
\begin{equation}\label{b-lambda}
\lambda^{2}=\mathcal{O}\left(\frac{1}{\kappa H}\ln\frac{\kappa}{H}\right)\,,
\end{equation}
as $\kappa\longrightarrow+\infty$.

\begin{prop}\label{prop-lb}
For all $\alpha\in(0,1)$, there exist positive
constants $C$ and $\kappa_{0}$ such that if $\kappa\geq\kappa_{0}$, $\ell \in (0,\frac 12)$, $\delta\in(0,1)$, $\rho>0$, $\ell^{2}\kappa H \rho>1$, $(\psi,\Ab)\in H^1(\Omega;\C)\times H^1_{\Div}(\Omega)$ is a minimizer of \eqref{eq-2D-GLf}, and $(\ell,x_0,\widetilde x_0)$ a $\rho$-admissible triple, then,
$$\frac1{|Q_{\ell}(x_0)|}\mathcal E_0(\psi,\Ab;Q_{\ell}(x_0))\geq (1-\delta)\kappa^2 \hat{f}\left(\displaystyle\frac{H}{\kappa}|B_{0}(\widetilde{x}_{0})|\right)-C\left( \frac{\kappa}{\ell}+\delta^{-1}(\kappa H)^{2}\ell^{4}+\delta^{-1}\kappa^{2}H^{2}\lambda^{2}\ell^{2\alpha} \right)\,.$$
\end{prop}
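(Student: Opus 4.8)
The plan is to mirror the structure of the upper bound in Proposition~\ref{pp-up-Eg}, but running the inequalities in the opposite direction and replacing the reference energy $e_D(b,R)$ by $e_N(b,R)$, which is the natural lower bound when $\psi$ need not vanish on $\partial Q_\ell(x_0)$. First I would perform the gauge transformation: on $Q_\ell(x_0)$ set $\widetilde\psi = e^{-i\kappa H\phi_{x_0}}\psi$, so that $(\nabla - i\kappa H\Ab)\psi$ becomes $(\nabla - i\kappa H(\Ab-\nabla\phi_{x_0}))\widetilde\psi$, and then use \eqref{alpha} together with \eqref{F-A} to replace the magnetic potential $\Ab - \nabla\phi_{x_0}$ successively by $\Fb$ and then by $\sigma_\ell |B_0(\widetilde x_0)|\Ab_0(x-x_0) + \nabla\varphi_{x_0,\widetilde x_0}$, absorbing the differences via the elementary inequality $|(\nabla-ia)u|^2 \geq (1-\delta)|(\nabla-ib)u|^2 - \delta^{-1}|a-b|^2|u|^2$ and the $L^\infty$-bound $|\psi|\leq 1$ from \eqref{est-psi}. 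This produces the two error terms $\delta^{-1}(\kappa H)^2\ell^6$ (from the $C\ell^2$ in \eqref{F-A}, integrated over $Q_\ell(x_0)$, i.e. times $\ell^2$) and $\delta^{-1}(\kappa H)^2\lambda^2\ell^{2\alpha}\cdot\ell^2$ (from \eqref{alpha}); after dividing by $|Q_\ell(x_0)| = \ell^2$ these become $\delta^{-1}(\kappa H)^2\ell^4$ and $\delta^{-1}(\kappa H)^2\lambda^2\ell^{2\alpha}$, matching the claimed remainder.

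Next I would rescale: with $R = \ell\sqrt{\kappa H|B_0(\widetilde x_0)|}$ and $b = \frac{H}{\kappa}|B_0(\widetilde x_0)|$ as in \eqref{def-Rb}, the change of variables $y = \frac{R}{\ell}(x-x_0)$ converts the localized, gauged energy into $\frac{1}{b}F^{\sigma_\ell}_{b,Q_R}(v)$ for the rescaled function $v$, exactly as in \eqref{2up-loc-en}; here the conjugation in the definition of $w$ for the region $\{B_0<-\rho\}$ is handled by \eqref{nE=pE}, so without loss of generality $\sigma_\ell = +1$. Since $v \in H^1(Q_R;\C)$ with no boundary constraint, we have $F^{+1}_{b,Q_R}(v) \geq e_N(b,R)$. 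Then I invoke the estimate relating $e_N$ to $\hat f$: from \eqref{eN<ep<eD} and \eqref{eD<ep} we get $e_N(b,R) \geq e_D(b,R) - CRb^{1/2}$, and from \eqref{eD>f(b)} in Proposition~\ref{FR}, $e_D(b,R) \geq \hat f(b)R^2$; hence $e_N(b,R) \geq \hat f(b) R^2 - CR b^{1/2}$. (One could alternatively use \eqref{eN<f}, but the $e_D$ route gives the cleaner constant and avoids the err-function.) Undoing the scaling, $R^2/b = \kappa H|B_0(\widetilde x_0)|\,\ell^2/b = \kappa^2\ell^2$ and $R b^{-1/2} = \ell\sqrt{\kappa H|B_0(\widetilde x_0)|}\cdot b^{-1/2} = \ell\kappa$, so we obtain
\begin{equation*}
\mathcal E_0(\psi,\Ab;Q_\ell(x_0)) \geq (1-\delta)\kappa^2\hat f\!\left(\tfrac{H}{\kappa}|B_0(\widetilde x_0)|\right)\ell^2 - C\left(\ell\kappa + \delta^{-1}(\kappa H)^2\ell^6 + \delta^{-1}(\kappa H)^2\lambda^2\ell^{2+2\alpha}\right).
\end{equation*}
Dividing by $\ell^2$ gives precisely the asserted inequality, with the $\kappa/\ell$ term coming from $\ell\kappa/\ell^2$.

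The main obstacle is bookkeeping rather than conceptual: one must be careful that every approximation error is genuinely of the stated order and that the factor $(1-\delta)$ is not degraded below linearly in $\delta$ — in particular the two magnetic-potential substitutions each cost a factor $(1-\delta)$, so one should split the budget as $(1-\delta)^2 \geq 1-2\delta$ and relabel $2\delta$ as $\delta$, or do the two substitutions in one step by bounding the total potential discrepancy $|\Ab - \nabla\phi_{x_0} - (\sigma_\ell|B_0(\widetilde x_0)|\Ab_0(\cdot - x_0) + \nabla\varphi_{x_0,\widetilde x_0})| \leq C_\alpha\lambda\ell^\alpha + C\ell^2$ in a single application of the inequality above. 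The hypothesis $\ell^2\kappa H\rho > 1$ guarantees $R \geq \ell\sqrt{\kappa H\rho} > 1$, which is exactly what is needed to apply Proposition~\ref{FR} and the bound $e_N(b,R)\geq e_D(b,R)-CRb^{1/2}$ (valid for $R\geq 1$, $0<b<1$, the latter holding since $b = \frac{H}{\kappa}|B_0(\widetilde x_0)| \leq \frac{H}{\kappa}\|B_0\|_\infty < 1$ for $\kappa$ large under \eqref{cond-H}).
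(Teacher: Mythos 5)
Your proposal is correct and follows essentially the same route as the paper: gauge away $\phi_{x_0}+\varphi_{x_0,\widetilde x_0}$, absorb the potential discrepancies $C\ell^2$ from \eqref{F-A} and $C_\alpha\lambda\ell^\alpha$ from \eqref{alpha} via the $(1-\delta)$/$\delta^{-1}$ splitting with $|\psi|\le 1$, rescale to $\frac1b F^{+1}_{b,Q_R}$, bound below by $e_N(b,R)$, and then use $e_N\ge e_D-CRb^{1/2}$ together with $e_D\ge R^2\hat f(b)$ from Proposition~\ref{FR}, exactly as in \eqref{es of A}--\eqref{est-loc-E0}. The arithmetic $R^2/b=\kappa^2\ell^2$, $R/\sqrt b=\kappa\ell$ and the resulting error terms all match the paper's.
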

\begin{proof}
Let $\widetilde{x}_{0}\in\overline{ Q_{\ell}(x_{0})}$. Recall the function $\varphi_{x_{0},\widetilde{x}_{0}}$ satisfiying \eqref{F-A}. For all $x\in Q_{\ell}(x_{0})$, let
\begin{equation}\label{defu}
u(x)=e^{-i\kappa H\varphi}\psi(x)\,,
\end{equation}
where $\varphi=\varphi_{x_{0},\widetilde{x}_{0}}+\phi_{x_{0}}$ and $\phi_{x_{0}}$ is introduced in \eqref{alpha}.

\textbf{Estimate of $\mathcal{E}_0$ in $Q_{\ell}(x_0)$:}~\\
As we did in \cite[Lemma~4.1]{KA}, we have, for any
$\delta\in(0,1)$ and $\alpha\in(0,1)$
\begin{align}\label{es of A}
\mathcal E_0(\psi,\Ab;Q_{\ell}(x_0))&\geq (1-\delta)\mathcal
E_0(u,\sigma_{\ell}|B_{0}(\widetilde{x}_{0})|\Ab_{0}(x-x_{0});Q_{\ell}(x_0))\nonumber\\
&\qquad\qquad\qquad\qquad\qquad\qquad-C\delta^{-1}(\kappa H)^{2}\left(\ell^{4}+\lambda^{2}\ell^{2\alpha}\right)\int_{Q_{\ell}(x_0)}|\psi|^2\,dx.
\end{align}
Let
\begin{equation}\label{def-bR}
R=\ell\sqrt{\kappa H |B_{0}(\widetilde{x}_{0})|}\qquad{\rm and}\qquad b=\frac{H}{\kappa}|B_{0}(\widetilde{x}_{0})|\,.
\end{equation}
 Define the function in $Q_{R}$
\begin{equation}\label{def-v}
v_{\ell,x_0,\widetilde{x}_{0}}(x)=\begin{cases}
u\left(\frac{\ell}{R} x+x_{0}\right)&{\rm if}~x\in Q_{R}\subset\{\{B_{0}>\rho\}\cap\Omega\}\\
\overline{u}\left(\frac{\ell}{R} x+x_{0}\right)&{\rm if}~x\in Q_{R}\subset\{\{B_{0}<-\rho\}\cap\Omega\}\,.
\end{cases}
\end{equation}
Using \eqref{nE=pE}, and the change of variable $y=\frac{R}{\ell}(x-x_{0})$, we get
\begin{equation}\label{E0<}
\mathcal E_0(u,\sigma_{\ell}\,|B_{0}(\widetilde{x}_{0})|\Ab_{0}(x-x_0);Q_{\ell}(x_0))=\frac{1}{b} F^{+1}_{b,Q_{R}}(v_{\ell,x_{0},\widetilde{x}_{0}})\,.
\end{equation}
Here $F^{+1}_{b,Q_{R}}$ is introduced in \eqref{eq-GL-F}. 
Since $v_{\ell,x_{0},\widetilde{x}_{0}}\in H^{1}(Q_{R})$, we have 
\begin{equation}\label{F>}
F^{+1}_{b,Q_{R}}(v_{\ell,x_{0},\widetilde{x}_{0}})\geq e_{N}(b,R)\,.
\end{equation}
By collecting \eqref{eD<ep}-\eqref{eD<f(b)} and the lower bound in \eqref{F>}, we get,
\begin{equation}\label{F2}
F^{+1}_{b,Q_{R}}(v_{\ell,x_{0},\widetilde{x}_{0}})\geq R^{2}\hat{f}(b)-CRb^{\frac{1}{2}}\,.
\end{equation}
As a consequence,  \eqref{E0<} gives us
\begin{equation}\label{final-maj}
\mathcal E_0(u,\sigma_{\ell}|B_{0}(\widetilde{x}_{0})|\Ab_{0}(x-x_0);Q_{\ell}(x_0))\geq R^{2}\frac{\hat{f}(b)}{b}-C\frac{R}{\sqrt{b}}\,.
\end{equation}
with $b$ and $R$ introduced in \eqref{def-bR}.\\
Inserting \eqref{final-maj} into \eqref{es of A} and using the bound of $\psi$ in \eqref{est-psi}, we get
$$\mathcal E_0(\psi,\Ab;Q_{\ell}(x_0))\geq (1-\delta)R^{2}\frac{\hat{f}(b)}{b}-C\frac{R}{\sqrt{b}}-C'\delta^{-1}(\kappa H)^{2}\left(\ell^{4}+\lambda^{2}\ell^{2\alpha}\right)\ell^{2}\,.$$
Having in mind \eqref{def-bR}, we get for any $\alpha\in(0,1)$
\begin{equation}\label{est-loc-E0}
\mathcal E_0(\psi,\Ab;Q_{\ell}(x_0))\geq \left( (1-\delta)\kappa^{2}\hat{f}\left( \frac{H}{\kappa}|B_{0}(\widetilde{x}_{0})| \right)-C\left( \frac{\kappa}{\ell}+\delta^{-1}(\kappa H)^{2}\ell^{4}+\delta^{-1}\kappa^{2}H^{2}\lambda^{2}\ell^{2\alpha} \right)\right)\ell^{2}\,.
\end{equation}
This finishes the proof of Proposition~\ref{prop-lb}.
\end{proof}
\begin{rem}\label{choice-ell2}
For any $\alpha\in(0,1)$, we keep the same choice of $\ell$, $\rho$ as in \eqref{choice-ell-rho} and choose $\delta$ as follows:
\begin{equation}\label{choice-delta-alpha}
\delta=\left(\ln\frac{\kappa}{H}\right)^{- \frac{\alpha}{4}}\,.
\end{equation}
This choice and Assumption~\eqref{cond-H}  permit us to have the assumptions of Proposition~\ref{prop-lb} satisfied and make the error terms in its statement
 of order $\textit{o}\left(\kappa H \ln\frac{\kappa}{H}\right)$.
We have as $\kappa\longrightarrow+\infty\,$,
$$\frac{\kappa}{\ell\kappa H\ln\frac{\kappa}{H}}=\frac{\kappa^{\frac{1}{4}}}{H^{\frac{3}{4}}\ln\frac{\kappa}{H}}\ll 1\,,$$
$$\frac{\delta^{-1}(\kappa H)^{2}\ell^{4}}{\kappa H \ln\frac{\kappa}{H}}=\frac{1}{\left(\ln\frac{\kappa}{H}\right)^{1-\frac{\alpha}{4}}}\ll 1\,,$$
$$\frac{\delta^{-1}\kappa H\ln\frac{C_{0}\kappa}{H}\ell^{2\alpha}}{\kappa H \ln\frac{\kappa}{H}}=\frac{\ln\frac{C_{0}\kappa}{H}}{\left(\ln\frac{\kappa}{H}\right)^{1-\frac{\alpha}{4}}\left(\kappa H\right)^{\frac{\alpha}{2}}}\ll 1\,,$$
$$\ell(\kappa H)^\frac{1}{2} \rho^\frac{1}{2}=\left(\kappa H\right)^{\frac{1}{12}}\gg1\,.$$
\end{rem}
\begin{rem}\label{pr-e(F)<e(A)}
As a byproduct of the proof, we get also a useful estimate. Using the bound $|\psi|\leq 1$, it results from \eqref{es of A}:
\begin{multline}\label{eq-e(F)<e(A)}
\frac{(1-\delta)}{|Q_{\ell}(x_0)|}\mathcal{E}_{0}(\psi,\sigma_{\ell}|B_{0}(\widetilde{x}_{0})|\Ab_{0}(x-x_0)+\nabla\varphi,Q_{\ell}(x_0))\leq\frac{1}{|Q_{\ell}(x_0)|}\mathcal{E}_{0}(\psi,\Ab,Q_{\ell}(x_0))\\
+C\delta^{-1}(\kappa H)^{2}(\ell^{4}+\lambda^{2}\ell^{2\alpha})\,.
\end{multline}
Using \eqref{b-lambda} and choosing $\ell$, $\rho$ as in \eqref{choice-ell-rho} and $\delta$ as in \eqref{choice-delta-alpha}, we get a function $\hat{\textit{r}}:(0,+\infty)\longmapsto(0,+\infty)$ satisfying $\displaystyle \lim_{t\longrightarrow+\infty}\hat{\textit{r}}(t)=0$ and
\begin{equation}\label{eq-e(F)<e(A)2}
\mathcal{E}_{0}(\psi,\sigma_{\ell}|B_{0}(\widetilde{x}_{0})|\Ab_{0}(x-x_0)+\nabla\varphi,Q_{\ell}(x_0))\leq\mathcal{E}_{0}(\psi,\Ab,Q_{\ell}(x_0))+\ell^{2}\kappa H \ln\frac{\kappa}{H}\,\hat{\textit{r}}(\kappa)\,,
\end{equation}
for any  $\widetilde{x}_{0}$ in $Q_{\ell}(x_{0})\,$.
\end{rem}

The next theorem presents the lower bound of the local energy in the domain $\mathcal{D}$ such that $\overline{\mathcal{D}}\subset\Omega$ and we deduce the lower bound of the global energy by replacing $\mathcal{D}$ with $\Omega$.
\begin{theorem}\label{lw-Eg}
Under Assumption~\eqref{B(x)}, if $H(\kappa)$ satisfies \eqref{cond-H}, $(\psi,\Ab)\in H^{1}(\Omega,\C)\times H^{1}_{\rm div}(\Omega)$ is a minimizer of \eqref{eq-2D-GLf} and $\overline {\mathcal{D}}\subset\Omega$ is open, then, 
$$\mathcal{E} (\psi,\Ab;\mathcal{D})\geq \kappa^{2}\int_{\mathcal{D}} \hat{f}\left(\frac{H}{\kappa}|B_{0}(x)|\right)\,dx+\textit{o}\left(\kappa H\ln\frac{\kappa}{H}\right)\,,\qquad{\rm as}\,\kappa\longrightarrow+\infty\,.$$
\end{theorem}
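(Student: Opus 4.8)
The plan is to obtain the lower bound by partitioning $\mathcal{D}$ into a disjoint union of small squares of side length $\ell$, applying the local lower bound of Proposition~\ref{prop-lb} on each square where the magnetic field is not too small, and then recognizing a Riemann sum that converges to the desired integral. First I would fix the parameters $\ell$, $\rho$, $\delta$ as in Remark~\ref{choice-ell2}, so that all error terms in Proposition~\ref{prop-lb} are $\textit{o}(\kappa H\ln\frac{\kappa}{H})$ per unit area, using \eqref{b-lambda} for the $\lambda^{2}$ term; then set up the lattice $\Gamma_{\ell}=\ell\Z\times\ell\Z$ and let $\mathcal I_{\ell,\rho}$ index those $\gamma\in\Gamma_{\ell}$ with $\overline{Q_{\gamma,\ell}}\subset\mathcal{D}\cap\{|B_{0}|>\rho\}$. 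Because the energy density $e(\psi,\Ab)\geq0$, we may throw away the contribution of the squares not in $\mathcal I_{\ell,\rho}$ and of the thin boundary layer $\mathcal{D}\setminus\bigcup_{\gamma\in\mathcal I_{\ell,\rho}}Q_{\gamma,\ell}$, incurring only a one-sided loss; that is the key to getting a \emph{lower} bound from a covering-by-squares argument.

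Next I would apply Proposition~\ref{prop-lb} on each $Q_{\gamma,\ell}$ with $\gamma\in\mathcal I_{\ell,\rho}$, choosing $\widetilde\gamma\in\overline{Q_{\gamma,\ell}}$ so that $|B_{0}(\widetilde\gamma)|$ equals $\overline{B}_{\gamma,\ell}:=\sup_{x\in Q_{\gamma,\ell}}|B_{0}(x)|$ — for a lower bound we want the \emph{largest} value of $|B_0|$ on the cell so that $\hat f\big(\tfrac H\kappa|B_0(\widetilde\gamma)|\big)$ dominates the lower Riemann sum of $x\mapsto\hat f(\tfrac H\kappa|B_0(x)|)$ (here we use that $\hat f$ is non-decreasing, Property~(3) of Theorem~\ref{pro-f(b)}). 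Summing over $\gamma\in\mathcal I_{\ell,\rho}$ and using $\sum_\gamma\ell^{2}\hat f(\cdot)\leq\int_{\mathcal D}\hat f(\cdot)$ would give
\begin{equation*}
\sum_{\gamma\in\mathcal I_{\ell,\rho}}\mathcal E_{0}(\psi,\Ab;Q_{\gamma,\ell})\geq(1-\delta)\kappa^{2}\int_{\mathcal{D}}\hat{f}\Big(\frac{H}{\kappa}|B_{0}(x)|\Big)\,dx - N\cdot\ell^{2}\cdot\textit{o}\Big(\kappa H\ln\frac{\kappa}{H}\Big) - \mathcal R,
\end{equation*}
where $N=\mathcal O(\ell^{-2})$ and $\mathcal R$ accounts for the part of the integral over $\mathcal{D}\setminus\bigcup Q_{\gamma,\ell}$ and over the region $\{|B_{0}|\leq\rho\}$; since $N\ell^{2}=\mathcal O(1)$ the error terms collapse to $\textit{o}(\kappa H\ln\frac{\kappa}{H})$. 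One then controls $\mathcal R$ by noting that $\{|B_{0}|\leq\rho\}$ is a shrinking neighborhood of $\Gamma$ (by Assumption~\eqref{B(x)}, $|\Gamma\cap\mathcal D|=0$ and $|\{|B_0|\leq\rho\}|=\mathcal O(\rho)$), and the discarded boundary layer has area $\mathcal O(\ell)$; on both, $\hat f(\tfrac H\kappa|B_0|)=\mathcal O(\tfrac H\kappa\ln\tfrac\kappa H)$ by \eqref{main-f(b)}, so $\kappa^2\mathcal R=\mathcal O((\rho+\ell)\kappa H\ln\tfrac\kappa H)=\textit{o}(\kappa H\ln\tfrac\kappa H)$.

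Finally, since $\mathcal E_{0}(\psi,\Ab;\mathcal{D})\geq\sum_{\gamma\in\mathcal I_{\ell,\rho}}\mathcal E_{0}(\psi,\Ab;Q_{\gamma,\ell})$ by positivity of the density and disjointness of the squares, and $\mathcal E(\psi,\Ab;\mathcal{D})\geq\mathcal E_{0}(\psi,\Ab;\mathcal{D})$ by definition \eqref{eq-GLen-D} (the magnetic term being nonnegative), we conclude
$$\mathcal E(\psi,\Ab;\mathcal{D})\geq\kappa^{2}\int_{\mathcal{D}}\hat{f}\Big(\frac{H}{\kappa}|B_{0}(x)|\Big)\,dx+\textit{o}\Big(\kappa H\ln\frac{\kappa}{H}\Big),$$
and taking $\mathcal{D}=\Omega$ (or an exhaustion of $\Omega$ by such $\mathcal{D}$, then passing to the limit and using that $\int_{\Omega\setminus\mathcal D}\hat f(\cdot)\to0$) yields the global lower bound. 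I expect the main obstacle to be the bookkeeping of the several error terms: one must check simultaneously that the parameter choice makes each of $\kappa/\ell$, $\delta^{-1}(\kappa H)^{2}\ell^{4}$, $\delta^{-1}\kappa^{2}H^{2}\lambda^{2}\ell^{2\alpha}$, the factor $\delta\cdot\kappa^2\int_\Omega\hat f$, and the $\rho$- and $\ell$-layer contributions are all $\textit{o}(\kappa H\ln\frac\kappa H)$ uniformly for $H$ in the whole range \eqref{cond-H} — this is exactly what Remark~\ref{choice-ell2} is arranged to guarantee, combined with Remark~\ref{R1} which shows $\kappa^2\int_\Omega\hat f\approx\kappa H\ln\frac\kappa H$ so that the $\delta$-loss is also acceptable.
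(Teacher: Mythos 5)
Your proposal follows essentially the same route as the paper: the same lattice decomposition of $\mathcal{D}$ into $\rho$-admissible $\ell$-squares, the same local lower bound from Proposition~\ref{prop-lb} with $\widetilde\gamma$ chosen at the supremum $\overline{B}_{\gamma,\ell}$ so that the resulting upper Riemann sum dominates the integral over the covered region, the same parameter choices from Remark~\ref{choice-ell2}, and the same $\mathcal O(\ell+\rho)$ control of the discarded set. The only blemish is the displayed inequality $\sum_\gamma\ell^{2}\hat f(\cdot)\leq\int_{\mathcal D}\hat f(\cdot)$, whose direction is inconsistent with your supremum choice -- it should read $\sum_\gamma\ell^{2}\hat f\big(\tfrac{H}{\kappa}\overline{B}_{\gamma,\ell}\big)\geq\int_{\mathcal{D}_{\ell,\rho}}\hat f$, after which your correction term $\mathcal R$ does exactly the right job, so the argument as a whole coincides with the paper's.
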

\begin{proof}
The proof is similar to the one of Theorem~\ref{up-Eg}.\\
Let
\begin{equation}\label{def-D-supB}
\mathcal{D}_{\ell,\rho}=\text{int}\left(\displaystyle{\cup_{\gamma\in \mathcal
I_{\ell,\rho}}}\overline{Q_{\gamma,\ell}}\right)
\end{equation}
and
\begin{equation}\label{supB}
\overline{B}_{\gamma,\ell}=\displaystyle\sup_{x\in Q_{\gamma,\ell}}|B_{0}(x)|\,,
\end{equation}
where $\mathcal I_{\ell,\rho}$ was introduced in \eqref{def-card-squares}.\\
If $(\psi,\Ab)$ is a minimizer of \eqref{eq-2D-GLf}, we have
$$\mathcal{E}(\psi,\Ab;\mathcal{D})=\mathcal{E}_{0}(\psi,\Ab;\mathcal{D}_{\ell,\rho})+\mathcal{E}_{0}(\psi,\Ab;\mathcal{D}\setminus \mathcal{D}_{\ell,\rho})+(\kappa H)^{2}\int_{\Omega}|\curl\Ab-B_{0}|^{2}\,dx\,,$$
where $\mathcal{E}_{0}(\psi,\Ab;\mathcal{D})$ is introduced in \eqref{eq-GLe0}.\\
Since the magnetic energy term and the energy in $\mathcal{D}\setminus \mathcal{D}_{\ell,\rho}$  are positive, we may write,
\begin{equation}\label{eD>eDl}
\mathcal{E}(\psi,\Ab;\mathcal{D})\geq \mathcal{E}_{0}(\psi,\Ab;\mathcal{D}_{\ell,\rho})\,.
\end{equation}
To estimate $\mathcal{E}_{0}(\psi,\Ab;\mathcal{D}_{\ell,\rho})$, we notice that,
$$\mathcal{E}_{0}(\psi,\Ab;\mathcal{D}_{\ell,\rho})=\sum_{\gamma\in\mathcal
I_{\ell,\rho}}\mathcal{E}_{0}(\psi,\Ab;Q_{\gamma,\ell})\,.$$
Recall that for any $\widetilde{\gamma}\in\overline{Q_{\gamma,\ell}}$ we have $B_{0}(\widetilde{\gamma})$ satisfies \eqref{F-A}. Then, we select $\widetilde{\gamma}$ such that 
$$|B_{0}(\widetilde{\gamma})|=\overline{B}_{\gamma,\ell}\,.$$ 
Using \eqref{est-loc-E0}, similarly as we did in the upper bound we recognize the upper Riemann sum, and get
\begin{equation}\label{sum-E0}
\mathcal{E}_{0}(\psi,\Ab;\mathcal{D}_{\ell,\rho})\geq \kappa^{2}(1-\delta)\int_{\mathcal{D}_{\ell,\rho}}\hat{f}\left(\frac{H}{\kappa}|B_{0}(x)|\right)\,dx-C\left( \frac{\kappa}{\ell}+\delta^{-1}(\kappa H)^{2}\ell^{4}+\delta^{-1}\kappa^{2}H^{2}\lambda^{2}\ell^{2\alpha} \right)\,.
\end{equation}
Notice that using the regularity of $\partial \mathcal{D}$ and \eqref{B(x)}, there exists $C>0$ such that
\begin{equation}\label{D-D_ell}
\forall\ell\in(0,1),\,\forall\rho\in(0,1),\quad|\mathcal{D}\setminus \mathcal{D}_{\ell,\rho}|\leq C(\ell+\rho)\,.
\end{equation}
We get by using property~(3) of $f$ in Theorem~\ref{pro-f(b)}, Assumption~\eqref{cond-H} and for some choice of $\rho$ to be determined later
\begin{align}\label{D-Dl}
\int_{\mathcal{D}_{\ell,\rho}}\hat{f}\left(\frac{H}{\kappa}|B_{0}(x)|\right)\,dx&\geq \int_{\mathcal{D}}\hat{f}\left(\frac{H}{\kappa}|B_{0}(x)|\right)\,dx-\int_{\mathcal{D}\setminus \mathcal{D}_{\ell,\rho}}\hat{f}\left(\frac{H}{\kappa}|B_{0}(x)|\right)\,dx\nonumber\\
&\geq \int_{\mathcal{D}}\hat{f}\left(\frac{H}{\kappa}|B_{0}(x)|\right)\,dx-C\frac{H}{\kappa}|\mathcal{D}\setminus \mathcal{D}_{\ell,\rho}|\,.
\end{align}
This implies that
\begin{equation}\label{fianl-Eg}
\E0(\kappa,H)\geq \kappa^{2}(1-\delta)\int_{\mathcal{D}}\hat{f}\left(\frac{H}{\kappa}|B_{0}(x)|\right)\,dx-r'(\kappa,H,\ell)\,,
\end{equation}
where $$r'(\kappa,H,\ell)=\mathcal{O}\left(\kappa H\ell+\kappa H\rho+\frac{\kappa}{\ell}+\delta^{-1}(\kappa H)^{2}\ell^{4}+\delta^{-1}\kappa^{2}H^{2}\lambda^{2}\ell^{2\alpha}\right)\,.$$
Having in mind \eqref{b-lambda}, then, the remainder term becomes
$$r'(\kappa,H,\ell)=\mathcal{O}\left(\kappa H\ell+\kappa H\rho+\frac{\kappa}{\ell}+\delta^{-1}(\kappa H)^{2}\ell^{4}+\delta^{-1}\kappa H\ln\frac{C_{0}\kappa}{H}\ell^{2\alpha}\right)\,.$$
The choice of the parameters $\delta$ in \eqref{choice-delta-alpha} and $\rho$, $\ell$ in \eqref{choice-ell-rho}  implies all error terms to be of lower order compared with  $\kappa H\ln\frac{\kappa}{H}$. This finishes the proof of Theorem~\ref{lw-Eg}.
\end{proof}
\begin{rem}\label{lower-bound-local-energy-in-D}
Notice that $\mathcal{E}_{0}(\psi,\Ab;\mathcal{D})\geq\mathcal{E}_{0}(\psi,\Ab;\mathcal{D}_{\ell,\rho})$. Using \eqref{sum-E0} and \eqref{D-Dl} with the same choices of $\delta$, $\rho$ and $\ell$ as in Remark~\ref{choice-ell1}, we obtain
\begin{equation}
\mathcal{E}_{0}(\psi,\Ab;\mathcal{D})\geq  \kappa^{2}\int_{\mathcal{D}}\hat{f}\left(\frac{H}{\kappa}|B_{0}(x)|\right)\,dx+\textit{o}\left(\kappa H\ln\frac{\kappa}{H}\right)\,.
\end{equation}
Moreover, we can replace $\mathcal{D}$ by $\Omega$ and get
\begin{equation}
\mathcal{E}_{0}(\psi,\Ab;\Omega)\geq  \kappa^{2}\int_{\Omega}\hat{f}\left(\frac{H}{\kappa}|B_{0}(x)|\right)\,dx+\textit{o}\left(\kappa H\ln\frac{\kappa}{H}\right)\,.
\end{equation}
\end{rem}
{\bf Proof of Corollary~\ref{estimate-magnetic-energy}}

Having in mind \eqref{eq-GLe0}, we write $$\mathcal{E}(\psi,\Ab;\Omega)=\mathcal{E}_{0}(\psi,\Ab;\Omega)+(\kappa H)^{2}\int_{\Omega}|\curl\Ab-B_{0}|^{2}\,dx\,.$$ Using the estimate of $\mathcal{E}(\psi,\Ab;\Omega)$ in Theorem~\ref{thm-2D-main}, we get, as $\kappa\longrightarrow+\infty$
\begin{equation}\label{cor-up}
\mathcal{E}_{0}(\psi,\Ab;\Omega)+(\kappa H)^{2}\int_{\Omega}|\curl\Ab-B_{0}|^{2}\,dx\leq \kappa^{2}\int_{\Omega}\hat{f}\left(\frac{H}{\kappa}|B_{0}(x)|\right)\,dx+\textit{o}\left(\kappa H\ln\frac{\kappa}{H}\right)\,.
\end{equation}
Remark~\ref{lower-bound-local-energy-in-D} tells us that
$$
\mathcal{E}_{0}(\psi,\Ab;\Omega)\geq  \kappa^{2}\int_{\Omega}\hat{f}\left(\frac{H}{\kappa}|B_{0}(x)|\right)\,dx+\textit{o}\left(\kappa H\ln\frac{\kappa}{H}\right)\,.$$
Therefore, \eqref{cor-up} becomes
\begin{multline}
\kappa^{2}\int_{\Omega}\hat{f}\left(\frac{H}{\kappa}|B_{0}(x)|\right)\,dx+\textit{o}\left(\kappa H\ln\frac{\kappa}{H}\right)+(\kappa H)^{2}\int_{\Omega}|\curl\Ab-B_{0}|^{2}\,dx\\
\leq \kappa^{2}\int_{\Omega}\hat{f}\left(\frac{H}{\kappa}|B_{0}(x)|\right)\,dx+\textit{o}\left(\kappa H\ln\frac{\kappa}{H}\right)\,.
\end{multline}
By simplification, we get \eqref{eq-estimate-magnetic-energy}.

\section{Proof of Theorem~\ref{th-loc-enrgy}: upper bound}\label{section6}
One aim of this section is to derive a sharp estimate of $\mathcal E_0(\psi,\Ab;Q_{\ell}(x_{0}))$, when $(\psi,\Ab)\in H^1(\Omega;\C)\times H^1_{\Div}(\Omega)$ is a minimizer of \eqref{eq-2D-GLf}.

The proof of the next proposition is similar to that in \cite[Proposition~6.2]{KA}, by replacing  $\displaystyle\frac{1}{R}$ by $\displaystyle\frac{b^{\frac{1}{2}}}{R}$.
\begin{prop}\label{prop-ub}
For $\alpha\in(0,1)$, there exist positive constants $C$ and $\kappa_{0}$ such that if $\kappa\geq\kappa_{0}$, $\ell \in (0,\frac 12)$, $\delta\in(0,1)$, $\rho>0$, $\ell^{2}\kappa H\rho\geq1$, $(\psi,\Ab)\in H^1(\Omega;\C)\times H^1_{\Div}(\Omega)$ is
a minimizer of \eqref{eq-2D-GLf}, and $ (\ell,x_0,\widetilde x_0)$ a $\rho$-admissible triple, then,
\begin{equation}\label{equ-ub}
\frac1{|Q_{\ell}(x_0)|}\mathcal E_0(\psi,\Ab;Q_{\ell}(x_0))\leq (1+\delta)\kappa^2 \hat{f}\left(\displaystyle\frac{H}{\kappa}|B_{0}(\widetilde{x}_{0})|\right)+C\left(\frac{\kappa}{\ell}+\delta^{-1}\ell^{4}\kappa^{2}H^{2}+\delta^{-1}\kappa^{2}H^{2}\lambda^{2}\ell^{2\alpha}\right)\,,
\end{equation}
where $\lambda$ is introduced in \eqref{lamda}.
\end{prop}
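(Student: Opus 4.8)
The plan is to follow the scheme of \cite[Proposition~6.2]{KA}, performing the substitution of $1/R$ by $\sqrt b/R$ announced in the text; the only genuinely new point is that, as in Proposition~\ref{prop-lb}, one must carry along the error coming from the gauge estimate \eqref{alpha}.

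Set $R=\ell\sqrt{\kappa H|B_0(\widetilde x_0)|}$, $b=\tfrac H\kappa|B_0(\widetilde x_0)|$, $\varphi=\varphi_{x_0,\widetilde x_0}+\phi_{x_0}$ and $u=e^{-i\kappa H\varphi}\psi$ on $Q_\ell(x_0)$, and let $v\in H^1(Q_R)$ be the rescaled, gauge-transformed minimizer, so that (exactly as in the proof of Proposition~\ref{prop-lb}) $\mathcal E_0\big(u,\sigma_\ell|B_0(\widetilde x_0)|\Ab_0(\cdot-x_0);Q_\ell(x_0)\big)=\tfrac1b F^{+1}_{b,Q_R}(v)$ and $|v|\le1$. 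The heart of the proof is a competitor argument using the minimality of $(\psi,\Ab)$. Let $u_{R-\sqrt b}\in H^1_0(Q_{R-\sqrt b})$ minimize $F^{+1}_{b,Q_{R-\sqrt b}}$, so $|u_{R-\sqrt b}|\le1$ by \eqref{up-u-1}, and let $\eta_R\in C^\infty(\overline{Q_R})$ be equal to $1$ on $Q_R\setminus Q_{R-\sqrt b/2}$, equal to $0$ on $Q_{R-\sqrt b}$, with $|\nabla\eta_R|\le Cb^{-1/2}$. Set $\widehat v=\eta_R v$ on $Q_R\setminus Q_{R-\sqrt b}$ and $\widehat v=u_{R-\sqrt b}$ on $Q_{R-\sqrt b}$; this is an element of $H^1(Q_R)$ with $|\widehat v|\le1$ and $\widehat v=v$ near $\partial Q_R$. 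Define $\widehat\psi=\psi$ on $\Omega\setminus Q_\ell(x_0)$ and $\widehat\psi(x)=e^{i\kappa H\varphi(x)}\widehat v\big(\tfrac R\ell(x-x_0)\big)$ on $Q_\ell(x_0)$, with the conjugation used when $B_0<-\rho$ as in \eqref{def-w}--\eqref{def-v}. Since $\widehat v=v$ near $\partial Q_R$, we have $\widehat\psi=\psi$ near $\partial Q_\ell(x_0)$, so $(\widehat\psi,\Ab)\in H^1(\Omega;\C)\times H^1_{\Div}(\Omega)$ is admissible and agrees with $(\psi,\Ab)$ off $Q_\ell(x_0)$; hence $\mathcal E_0(\psi,\Ab;Q_\ell(x_0))\le\mathcal E_0(\widehat\psi,\Ab;Q_\ell(x_0))$ by minimality.

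It remains to bound the right-hand side. Running the Cauchy--Schwarz/gauge step of Proposition~\ref{prop-lb} in the opposite direction for $\widehat\psi$, with $|\widehat\psi|\le1$ and $|\Ab-\sigma_\ell|B_0(\widetilde x_0)|\Ab_0(x-x_0)-\nabla\varphi|\le C(\ell^2+\lambda\ell^\alpha)$ on $Q_\ell(x_0)$ (a consequence of \eqref{F-A} and \eqref{alpha}), one gets
\begin{equation*}
\mathcal E_0(\widehat\psi,\Ab;Q_\ell(x_0))\le\frac{1+\delta}{b}\,F^{+1}_{b,Q_R}(\widehat v)+C\delta^{-1}(\kappa H)^2\big(\ell^4+\lambda^2\ell^{2\alpha}\big)\ell^2 .
\end{equation*}
Now $F^{+1}_{b,Q_R}(\widehat v)=F^{+1}_{b,Q_R\setminus Q_{R-\sqrt b}}(\eta_R v)+e_D(b,R-\sqrt b)$. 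By Proposition~\ref{FR} (whose hypotheses are met here), $e_D(b,R-\sqrt b)\le(R-\sqrt b)^2\hat f(b)+C\sqrt b\,(R-\sqrt b)\le R^2\hat f(b)+C\sqrt b\,R$. In the layer $Q_R\setminus Q_{R-\sqrt b}$, of area $\le CR\sqrt b$: the self-interaction term is $\le\tfrac12|Q_R\setminus Q_{R-\sqrt b}|\le CR\sqrt b$; the term $b|\nabla\eta_R|^2|v|^2$ is $\le Cb\cdot b^{-1}\cdot R\sqrt b=CR\sqrt b$; and $b\,\eta_R^2|(\nabla-i\Ab_0)v|^2\le b|(\nabla-i\Ab_0)v|^2$, whose integral over the layer equals, once the scaling is undone, $b$ times the gradient energy of $u$ on the band of width $\asymp1/\kappa$ adjacent to $\partial Q_\ell(x_0)$; using once more the gauge bound together with the a priori estimate $\|(\nabla-i\kappa H\Ab)\psi\|_{L^\infty(\Omega)}=\mathcal O(\kappa)$ for minimizers (which follows from \eqref{eq-2D-GLeq}, \eqref{est-psi} and Proposition~\ref{pr-est} by elliptic regularity), this is $\le Cb\,\kappa^2\cdot\tfrac\ell\kappa+(\text{errors of the type already appearing in the statement})=CR\sqrt b+\dots$. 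Collecting, $F^{+1}_{b,Q_R}(\widehat v)\le R^2\hat f(b)+CR\sqrt b+\dots$; since $R^2/b=\ell^2\kappa^2$ and $R/\sqrt b=\ell\kappa$, dividing by $|Q_\ell(x_0)|=\ell^2$ gives \eqref{equ-ub}.

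The main obstacle is the estimate of $F^{+1}_{b,Q_R\setminus Q_{R-\sqrt b}}(\eta_R v)$, and in particular the choice of the width of the transition layer. It is forced to be $\asymp\sqrt b$ in the rescaled variable (i.e.\ $\asymp1/\kappa$ in the original one): a wider layer would make the minimizer's own gradient energy over it — which, after undoing the scaling, is exactly what one has to absorb — exceed the budget $CR\sqrt b$, whereas a narrower one would make the cut-off gradient cost $\asymp b|\nabla\eta_R|^2\cdot(\text{area})$ exceed it. This balance is precisely the replacement of $1/R$ by $\sqrt b/R$ compared with \cite{KA}; and the estimate of the minimizer's gradient energy over the resulting $\asymp(1/\kappa)$-wide band adjacent to $\partial Q_\ell(x_0)$ is where the a priori $L^\infty$ bound on $(\nabla-i\kappa H\Ab)\psi$ enters. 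Everything else — the gauge reduction, the scaling, and Proposition~\ref{FR} — is routine.
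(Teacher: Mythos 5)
Your proposal is correct and follows essentially the route the paper intends: it delegates the proof to \cite[Proposition~6.2]{KA} with $1/R$ replaced by $\sqrt{b}/R$, which is precisely the local-competitor argument you give (glue the Dirichlet minimizer of $F^{+1}_{b,Q_{R-\sqrt b}}$ to the gauge-transformed minimizer across a transition layer of rescaled width $\asymp\sqrt b$, use minimality, and invoke Proposition~\ref{FR}), and your error bookkeeping reproduces exactly the three terms $\kappa/\ell$, $\delta^{-1}\ell^4\kappa^2H^2$ and $\delta^{-1}\kappa^2H^2\lambda^2\ell^{2\alpha}$ of \eqref{equ-ub}. The only ingredient not stated in the paper is the a priori bound $\|(\nabla-i\kappa H\Ab)\psi\|_{L^\infty}=\mathcal O(\kappa)$, which is legitimate here (it is the uniform elliptic estimate of Fournais--Helffer, reference \cite{FH-p} of the bibliography, valid since $H\ll\kappa$) and could alternatively be bypassed by choosing the position of the $\sqrt b$-layer so that the minimizer's energy there does not exceed its average.
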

\begin{rem}\label{est-error}
Under Assumption~\eqref{cond-H}, with the choices of $\ell$, $\rho$ in \eqref{choice-ell-rho} and $\delta$ in \eqref{choice-delta-alpha}, we get that the error terms in \eqref{equ-ub} are of order $\kappa H\,\ln\frac{\kappa}{H}$
\end{rem}
\begin{prop}\label{prop-ub2}
For any $\alpha\in(0,1)$, there exist positive constants $\widehat C_\alpha $ and $\kappa_{0}$ such that if $\kappa\geq\kappa_{0} $, $H(\kappa)$ satisfies \eqref{cond-H}, $\ell$ is chosen as in \eqref{choice-ell-rho}, $\delta$ as in \eqref{choice-delta-alpha}, $\ell^{2}\kappa H\rho\geq1$, $(\psi,\Ab)$ is a minimizer of \eqref{eq-2D-GLf}, and $ (\ell,x_0,\widetilde x_0)$ a $\rho$-admissible triple, then
\begin{multline}\label{est-A0-final}
\left|\frac{1}{|Q_{\ell}(x_0)|}\mathcal{E}_{0}(\psi,\sigma_{\ell}|B_{0}(\widetilde{x}_{0})|\Ab_{0}(x-x_0)+\nabla(\varphi_{x_{0},\widetilde{x}_{0}}+\phi_{x_{0}}),Q_{\ell}(x_0))-\kappa^{2}\hat{f}\left(\frac{H}{\kappa}|B_{0}(\widetilde{x}_{0})|\right)\right|\\
\leq \widehat C_\alpha \, \kappa H\,\left(\ln\frac{\kappa}{H}\right)^{\frac{\alpha}{4}} \,,
\end{multline}
where $\Ab_{0}$ is the magnetic potential introduced in \eqref{eq-hc2-mpA0}, $\sigma_{\ell}$ denotes the sign of $B_{0}$, $\phi_{x_{0}}$ is defined in \eqref{alpha} and $\varphi_{x_{0},\widetilde{x}_{0}}$ is the function satisfying \eqref{F-A}.
\end{prop}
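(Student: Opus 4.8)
Write $\Ab^{\sharp}=\sigma_\ell|B_0(\widetilde{x}_0)|\Ab_0(x-x_0)+\nabla(\varphi_{x_0,\widetilde{x}_0}+\phi_{x_0})$ for the model magnetic potential appearing in \eqref{est-A0-final}. The plan is to sandwich $\frac{1}{|Q_\ell(x_0)|}\mathcal E_0(\psi,\Ab^{\sharp};Q_\ell(x_0))$ between the model energy of the reference problem of Section~2 (from below) and the true local energy $\frac{1}{|Q_\ell(x_0)|}\mathcal E_0(\psi,\Ab;Q_\ell(x_0))$ already estimated in Proposition~\ref{prop-ub} (from above). Set $R=\ell\sqrt{\kappa H|B_0(\widetilde{x}_0)|}$ and $b=\frac{H}{\kappa}|B_0(\widetilde{x}_0)|$, let $u=e^{-i\kappa H(\varphi_{x_0,\widetilde{x}_0}+\phi_{x_0})}\psi$ as in \eqref{defu}, and let $v_{\ell,x_0,\widetilde{x}_0}$ be the rescaled function of \eqref{def-v}; under the hypotheses $0<b<1$ and $R>\ell\sqrt{\kappa H\rho}>1$, so the results of Section~2 apply, the case $\sigma_\ell=-1$ being reduced to $\sigma_\ell=+1$ through \eqref{nE=pE}.

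\emph{Lower bound.} By exact gauge invariance and the change of variables $y=\frac{R}{\ell}(x-x_0)$, precisely as in the computation of \eqref{E0<}, one has $\mathcal E_0(\psi,\Ab^{\sharp};Q_\ell(x_0))=\frac{1}{b}F^{+1}_{b,Q_R}(v_{\ell,x_0,\widetilde{x}_0})$. Since $v_{\ell,x_0,\widetilde{x}_0}\in H^1(Q_R)$ we get $F^{+1}_{b,Q_R}(v_{\ell,x_0,\widetilde{x}_0})\geq e_N(b,R)$, while \eqref{eD<ep} and \eqref{eD>f(b)} together give $e_N(b,R)\geq R^2\hat f(b)-CR\sqrt b$. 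As $R^2/b=\ell^2\kappa^2$ and $R/\sqrt b=\ell\kappa$, dividing by $|Q_\ell(x_0)|=\ell^2$ yields
\[
\frac{1}{|Q_\ell(x_0)|}\mathcal E_0(\psi,\Ab^{\sharp};Q_\ell(x_0))\ \geq\ \kappa^2\hat f\!\left(\frac{H}{\kappa}|B_0(\widetilde{x}_0)|\right)-C\frac{\kappa}{\ell},
\]
and under \eqref{cond-H} one has $\kappa/\ell=\kappa(\kappa H)^{1/4}=O(\kappa H)$ because $H\geq C_{\min}\kappa^{1/3}$, so this is already well below the asserted error.

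\emph{Upper bound.} Here I would use \eqref{eq-e(F)<e(A)} of Remark~\ref{pr-e(F)<e(A)} (in which $\varphi=\varphi_{x_0,\widetilde{x}_0}+\phi_{x_0}$ and $|\psi|\le1$ have been used), which bounds $\frac{1}{|Q_\ell(x_0)|}\mathcal E_0(\psi,\Ab^{\sharp};Q_\ell(x_0))$ from above by
\[
\frac{1}{1-\delta}\left(\frac{1}{|Q_\ell(x_0)|}\mathcal E_0(\psi,\Ab;Q_\ell(x_0))+C\delta^{-1}(\kappa H)^2\big(\ell^4+\lambda^2\ell^{2\alpha}\big)\right),
\]
and then feed in the upper bound of Proposition~\ref{prop-ub} for $\frac{1}{|Q_\ell(x_0)|}\mathcal E_0(\psi,\Ab;Q_\ell(x_0))$. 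Together these give
\[
\frac{1}{|Q_\ell(x_0)|}\mathcal E_0(\psi,\Ab^{\sharp};Q_\ell(x_0))\ \leq\ \frac{1+\delta}{1-\delta}\,\kappa^2\hat f\!\left(\frac{H}{\kappa}|B_0(\widetilde{x}_0)|\right)+\frac{C}{1-\delta}\left(\frac{\kappa}{\ell}+\delta^{-1}\ell^4(\kappa H)^2+\delta^{-1}(\kappa H)^2\lambda^2\ell^{2\alpha}\right).
\]

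\emph{Conclusion, and the main point.} Combining the two displays reduces the statement to showing that the surplus $\frac{2\delta}{1-\delta}\kappa^2\hat f(\frac{H}{\kappa}|B_0(\widetilde{x}_0)|)$ produced by the factor $(1+\delta)/(1-\delta)$, together with the remaining error terms, is of the order claimed in \eqref{est-A0-final} (in particular of lower order than $\kappa H\ln\frac{\kappa}{H}$), uniformly over $\rho$-admissible triples. This is the same bookkeeping already carried out in Remark~\ref{est-error}: one uses $\lambda^2=O\big((\kappa H)^{-1}\ln\frac{\kappa}{H}\big)$ from \eqref{b-lambda}, the choices $\ell=(\kappa H)^{-1/4}$, $\rho=(\kappa H)^{-1/3}$, $\delta=\left(\ln\frac{\kappa}{H}\right)^{-\alpha/4}$ of \eqref{choice-ell-rho}--\eqref{choice-delta-alpha}, and the uniform a priori estimate $\kappa^2\hat f(\frac{H}{\kappa}|B_0(\widetilde{x}_0)|)=O\big(\kappa H\ln\frac{\kappa}{H}\big)$, which follows from the near-zero asymptotics \eqref{main-f(b)} of $\hat f$ and $|B_0(\widetilde{x}_0)|\le\|B_0\|_{L^\infty(\Omega)}$. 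The only step that is not pure bookkeeping is controlling $\delta\,\kappa^2\hat f(\frac{H}{\kappa}|B_0(\widetilde{x}_0)|)$: one needs the precise logarithmic behaviour \eqref{main-f(b)} of $\hat f$ near $0$ — applicable because $b\le\|B_0\|_{L^\infty(\Omega)}H/\kappa\to0$ — in order to convert the smallness of $\delta$ into the required gain of a power of $\ln\frac{\kappa}{H}$; every other error term is polynomially small in $(\kappa H)^{-1}$ relative to the leading contribution.
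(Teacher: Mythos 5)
Your strategy coincides with the paper's: the lower bound via gauge invariance, the rescaling $y=\frac{R}{\ell}(x-x_0)$ and $e_N(b,R)\ge e_D(b,R)-CRb^{1/2}\ge R^2\hat f(b)-CRb^{1/2}$ (the paper's \eqref{final-maj}), and the upper bound by combining \eqref{eq-e(F)<e(A)} with Proposition~\ref{prop-ub}. The lower-bound half is complete and correct, including the observation that $\kappa/\ell=(\kappa^5H)^{1/4}=\mathcal O(\kappa H)$ under \eqref{cond-H}.

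The gap sits exactly in the step you flag as ``the main point.'' The surplus coming from the prefactor is $\frac{2\delta}{1-\delta}\,\kappa^2\hat f\!\left(\frac{H}{\kappa}|B_0(\widetilde x_0)|\right)$. By \eqref{main-f(b)}, $\kappa^2\hat f\!\left(\frac{H}{\kappa}|B_0(\widetilde x_0)|\right)\approx \kappa H\,|B_0(\widetilde x_0)|\ln\frac{\kappa}{H|B_0(\widetilde x_0)|}$, which is genuinely of order $\kappa H\ln\frac{\kappa}{H}$ on $\rho$-admissible squares where $|B_0|$ is bounded below. Multiplying by $\delta=\left(\ln\frac{\kappa}{H}\right)^{-\alpha/4}$ therefore produces a term of order $\kappa H\left(\ln\frac{\kappa}{H}\right)^{1-\frac{\alpha}{4}}$, and since $1-\frac{\alpha}{4}>\frac{\alpha}{4}$ for every $\alpha\in(0,1)$, this \emph{exceeds} the right-hand side $\widehat C_\alpha\,\kappa H\left(\ln\frac{\kappa}{H}\right)^{\frac{\alpha}{4}}$ of \eqref{est-A0-final}. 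The smallness of $\delta$ does not ``convert'' into the claimed gain: what your argument actually proves is an error of order $\kappa H\left(\ln\frac{\kappa}{H}\right)^{1-\frac{\alpha}{4}}$ --- still $\textit{o}\left(\kappa H\ln\frac{\kappa}{H}\right)$, but not the stated exponent. To be fair, the paper's own proof has the same soft spot: in passing from \eqref{eq-e(F)<e(A)} and \eqref{equ-ub} to \eqref{lw-A0}, the factor $\frac{1+\delta}{1-\delta}$ in front of $\kappa^2\hat f$ is silently replaced by $1$. So you have reproduced the intended argument faithfully, but the exponent $\frac{\alpha}{4}$ is not justified by it; with $\ell$ and $\delta$ fixed as in \eqref{choice-ell-rho} and \eqref{choice-delta-alpha} one must either weaken the conclusion to $\left(\ln\frac{\kappa}{H}\right)^{1-\frac{\alpha}{4}}$ (and then recheck its use in \eqref{app-main-term} and \eqref{lw-f3}) or supply an additional argument that removes the $(1+\delta)$ loss from the comparison with the true local energy.
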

\begin{proof}~\\
\textbf{Lower bound:}
We refer to \eqref{final-maj} and \eqref{def-bR}. We obtain
\begin{equation}\label{up-A03}
\frac{1}{|Q_{\ell}(x_0)|}\mathcal{E}_{0}(\psi,\sigma_{\ell}|B_{0}(\widetilde{x}_{0})|\Ab_{0}(x-x_0)+\nabla(\varphi_{x_{0},\widetilde{x}_{0}}+\phi_{x_{0}}),Q_{\ell}(x_0))\geq \kappa^{2}\hat{f}\left(\frac{H |B_{0}(\widetilde{x}_{0})|}{\kappa}\right)-C \frac{\kappa}{\ell}\,,
\end{equation}
where $C$ is a positive constant.\\
If we select $\ell$ as in \eqref{choice-ell-rho}, we get 
\begin{equation}\label{up-A03-final}
\frac{1}{|Q_{\ell}(x_0)|}\mathcal{E}_{0}(\psi,\sigma_{\ell}|B_{0}(\widetilde{x}_{0})|\Ab_{0}(x-x_0)+\nabla(\varphi_{x_{0},\widetilde{x}_{0}}+\phi_{x_{0}}),Q_{\ell}(x_0))\geq \kappa^{2}\hat{f}\left(\frac{H |B_{0}(\widetilde{x}_{0})|}{\kappa}\right)-C \,(\kappa^{5}H)^\frac{1}{4}\,.
\end{equation}
Assumption \eqref{cond-H} permits to verify that the remainder $(\kappa^{5}H)^\frac{1}{4} = \mathcal O (\kappa H (\ln \frac \kappa H)^\frac \alpha 4 )$.\\
\textbf{Upper bound:} 
Collecting \eqref{eq-e(F)<e(A)} and \eqref{equ-ub}, we get for any $\alpha\in(0,1)$, the existence of $C'>0$ such that
\begin{multline}\label{lw-A0}
\frac{1}{|Q_{\ell}(x_0)|}\mathcal{E}_{0}(\psi,\sigma_{\ell}|B_{0}(\widetilde{x}_{0})|\Ab_{0}(x-x_0)+\nabla(\varphi_{x_{0},\widetilde{x}_{0}}+\phi_{x_{0}}),Q_{\ell}(x_0))\leq \kappa^2\hat{f}\left(\displaystyle\frac{H}{\kappa}|B_{0}(\widetilde{x}_{0})|\right)\\
+C'\left(\frac{\kappa}{\ell}+\delta^{-1}\ell^{4}\kappa^{2}H^{2}+\delta^{-1}\kappa^{2}H^{2}\lambda^{2}\ell^{2\alpha}\right)\,,
\end{multline}
where  $\lambda$ is introduced in \eqref{lamda}.\\
Using \eqref{b-lambda} and selecting  $\ell$ as in \eqref{choice-ell-rho} and $\delta$ as in \eqref{choice-delta-alpha}, we get the existence of a  constant $C_{\alpha}$ such that
\begin{multline}\label{lw-A0-final}
\frac{1}{|Q_{\ell}(x_0)|}\mathcal{E}_{0}(\psi,\sigma_{\ell}|B_{0}(\widetilde{x}_{0})|\Ab_{0}(x-x_0)+\nabla(\varphi_{x_{0},\widetilde{x}_{0}}+\phi_{x_{0}}),Q_{\ell}(x_0))\leq \kappa^2\hat{f}\left(\displaystyle\frac{H}{\kappa}|B_{0}(\widetilde{x}_{0})|\right)\\
+C_{\alpha}\kappa H\left(\ln\frac{\kappa}{H}\right)^{\frac{\alpha}{4}}\,.
\end{multline}
This achieves the proof of the lemma.
\end{proof}
The next lemma will be useful in the proof of Theorem~\ref{th-loc-enrgy}.
\begin{lem}\label{est-gamma} For any $C_1>0$, there exist positive constants $C$ and $\kappa_{0}$ such that if $\ell\in (0,1)$, $\kappa_{0}\leq \kappa$ and $(\psi,\Ab)\in H^{1}(\Omega;\C)\times H^{1}_{\rm div}(\Omega)$ is a solution of \eqref{eq-2D-GLeq}, then 
\begin{equation}
\int_{\mathcal V_\ell (\Gamma, C_1)}|(\nabla-i\kappa H \Ab)\psi|^{2}\,dx\leq C\kappa^{2}\ell\left(1+\frac{1}{\kappa\ell^{\frac{3}{2}}}\right)\,,
\end{equation}
where  
$$ 
\mathcal V_\ell (\Gamma, C_1) = \left\{x\in\Omega:{\rm dist}(x,\Gamma)\leq C_{1}\ell  \mbox{ and } d(x,\partial \Omega) \geq \frac{\ell }{C_1}\right\}\,.
$$
\end{lem}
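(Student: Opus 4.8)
The plan is to localise the first Ginzburg--Landau equation of \eqref{eq-2D-GLeq} to a thin tube around $\Gamma$ by means of a cut-off $\chi$, and to control the resulting terms using nothing more than the pointwise bound $|\psi|\le1$ from \eqref{est-psi}. Two ingredients enter: a measure estimate for tubular neighbourhoods of $\Gamma$, and the standard identity obtained by pairing the equation with $\chi^{2}\overline{\psi}$ and integrating by parts.

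For the geometry, note that by Assumption~\eqref{B(x)} the gradient $\nabla B_{0}$ does not vanish on $\Gamma$, so $\Gamma$ is a compact embedded $C^{\infty}$ one-manifold with boundary on $\partial\Omega$ --- that is, a finite union of smooth arcs and loops of finite total length (the condition $\nabla B_{0}\cdot\vec{n}\neq0$ on $\Gamma\cap\partial\Omega$ guaranteeing that this structure is compatible with $\partial\Omega$). A standard covering estimate then produces a constant $C_{\Gamma}$, depending only on $\Omega$ and $B_{0}$, such that
$$
\big|\{x\in\Omega:\dist(x,\Gamma)\le t\}\big|\le C_{\Gamma}\,t\qquad\text{for all }t>0\,.
$$
The restriction $d(x,\partial\Omega)\ge\ell/C_{1}$ in the statement only shrinks $\mathcal V_{\ell}(\Gamma,C_{1})$, so it causes no trouble here; its real role is to leave room for an optional further cut-off of $\chi$ near $\partial\Omega$.

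Now fix $\eta\in C^{\infty}(\R)$ with $0\le\eta\le1$, $\eta\equiv1$ on $(-\infty,0]$, $\eta\equiv0$ on $[1,\infty)$, and put $\chi(x)=\eta\big(\kappa\,(\dist(x,\Gamma)-C_{1}\ell)\big)$. Since the distance function is $1$-Lipschitz, $\chi$ is Lipschitz, $\chi\equiv1$ on $\mathcal V_{\ell}(\Gamma,C_{1})\subset\{\dist(\cdot,\Gamma)\le C_{1}\ell\}$, $\supp\chi\subset\{\dist(\cdot,\Gamma)\le C_{1}\ell+\kappa^{-1}\}$ and $|\nabla\chi|\le\|\eta'\|_{\infty}\,\kappa$. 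By the tube estimate, $|\supp\chi|\le C_{\Gamma}(C_{1}\ell+\kappa^{-1})\le C(\ell+\kappa^{-1})$, and therefore $\int_{\Omega}|\nabla\chi|^{2}\,dx\le\|\eta'\|_{\infty}^{2}\,\kappa^{2}\,|\supp\chi|\le C\,\kappa^{2}(\ell+\kappa^{-1})$. Multiplying the equation $-(\nabla-i\kappa H\Ab)^{2}\psi=\kappa^{2}(1-|\psi|^{2})\psi$ by $\chi^{2}\overline{\psi}$, integrating over $\Omega$ and integrating by parts --- the boundary term drops because $\nu\cdot(\nabla-i\kappa H\Ab)\psi=0$ on $\partial\Omega$ (alternatively, one first multiplies $\chi$ by a cut-off supported in $\{d(\cdot,\partial\Omega)\ge\ell/(2C_{1})\}$, which is where the hypothesis $d(x,\partial\Omega)\ge\ell/C_{1}$ is used) --- one arrives at
$$
\int_{\Omega}\chi^{2}\,|(\nabla-i\kappa H\Ab)\psi|^{2}\,dx+2\,\RE\!\int_{\Omega}\chi\,\nabla\chi\cdot\overline{\psi}\,(\nabla-i\kappa H\Ab)\psi\,dx=\kappa^{2}\!\int_{\Omega}\chi^{2}(1-|\psi|^{2})|\psi|^{2}\,dx\,.
$$
The right-hand side is at most $\kappa^{2}|\supp\chi|$ since $0\le(1-|\psi|^{2})|\psi|^{2}\le1$ by \eqref{est-psi}, while by Cauchy--Schwarz, Young's inequality and $|\psi|\le1$ the cross term is bounded by $\tfrac12\int_{\Omega}\chi^{2}|(\nabla-i\kappa H\Ab)\psi|^{2}\,dx+2\int_{\Omega}|\nabla\chi|^{2}\,dx$; absorbing the first term we conclude
$$
\int_{\mathcal V_{\ell}(\Gamma,C_{1})}|(\nabla-i\kappa H\Ab)\psi|^{2}\,dx\le\int_{\Omega}\chi^{2}|(\nabla-i\kappa H\Ab)\psi|^{2}\,dx\le 2\kappa^{2}|\supp\chi|+4\int_{\Omega}|\nabla\chi|^{2}\,dx\le C\big(\kappa^{2}\ell+\kappa\big)\,.
$$
Finally, since $0<\ell<1$ one has $\kappa^{2}\ell+\kappa=\kappa^{2}\ell\big(1+\tfrac1{\kappa\ell}\big)\le\kappa^{2}\ell\big(1+\tfrac1{\kappa\ell^{3/2}}\big)$, which is the asserted bound.

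The computation itself is short; the only point that needs thought is the \emph{choice of the transition width} of $\chi$. Taking it of order $\ell$ --- the most natural guess --- would give only $C(\kappa^{2}\ell+\ell^{-1})$, which is weaker than the assertion when $\ell$ is small; choosing it of order $\kappa^{-1}$ optimally balances the two error sources (enlarging $\supp\chi$ costs $\kappa^{2}\cdot\kappa^{-1}=\kappa$ in the ``potential'' term, and $\int_{\Omega}|\nabla\chi|^{2}$ is then likewise of order $\kappa$), and this is exactly what yields the remainder $O(\kappa^{2}\ell+\kappa)$. Apart from that, the only other ingredient is the uniform tube estimate, which follows immediately from Assumption~\eqref{B(x)}.
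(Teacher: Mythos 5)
Your proof is correct, and its skeleton --- localize the first equation of \eqref{eq-2D-GLeq} with a cut-off supported in a tube around $\Gamma$, integrate by parts (the boundary term vanishing by the Neumann condition or by an extra cut-off near $\partial\Omega$), and use $|\psi|\le1$ together with the $O(\ell)$ measure of the tube --- is the same as the paper's. The genuine difference is in the treatment of the cross term $\int\nabla\chi\cdot\bar\psi\,(\nabla-i\kappa H\Ab)\psi$. The paper uses a cut-off $\chi_\ell$ with transition width of order $\ell$ (so $|\nabla\chi_\ell|\le C/\ell$) and bounds the cross term by H\"older against the \emph{global} a priori estimate $\|(\nabla-i\kappa H\Ab)\psi\|_{L^{2}(\Omega)}\le C\kappa$ coming from \eqref{est-grad}; this yields the error $C\kappa\ell^{-1/2}$, which is precisely the origin of the factor $1+\frac{1}{\kappa\ell^{3/2}}$ in the statement. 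You instead shrink the transition width to $\kappa^{-1}$ and absorb the cross term into the left-hand side by Young's inequality. This buys two things: your argument is self-contained (it never invokes the energy estimates of Proposition~\ref{pr-est}, which are stated for minimizers, so it applies verbatim to any solution of \eqref{eq-2D-GLeq}), and it gives the marginally sharper remainder $C(\kappa^{2}\ell+\kappa)$, which implies the stated bound since $\ell<1$. Your closing remark correctly identifies the one non-trivial point: a width-$\ell$ collar combined with absorption would only give $C(\kappa^{2}\ell+\ell^{-1})$, which fails for $\ell\lesssim\kappa^{-2}$, so either your narrow collar or the paper's global gradient bound is genuinely needed. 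The remaining differences (testing against $\chi^{2}\bar\psi$ rather than $\chi_\ell\bar\psi$, and the explicit tubular-neighbourhood measure estimate, which the paper uses implicitly as $|\mathcal D_\ell^2|\le C'\ell$) are cosmetic.
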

\begin{proof} 
Using \eqref{est-grad} and the fact that the range of $\hat{f}$ is the interval $[0,1/2]\,$, we get
\begin{equation}\label{pr21}
\|(\nabla-i\kappa H\Ab)\psi\|_{L^{2}(\Omega)} \leq C \kappa\,.
\end{equation}
Hence the improvment given by the lemma is when   $\frac 1C  \kappa^{-2} \leq \ell \leq \ell_0\,$. \\  Let $C_2 > C_1$ and for $\ell$ small enough
we define the following sets $\mathcal D_\ell^1= \mathcal V_\ell (\Gamma, C_1) \, $ and \break 
$\mathcal D_\ell^2=\mathcal V_\ell (\Gamma, C_2)$\,.
We can construct a cut-off function $\chi_{\ell}\in C^{\infty}_{c}(\Omega)$ such that
\begin{equation}\label{chi-l}
0\leq \chi_{\ell} \leq 1~{\rm in}~\R^{2}\,,\quad {\rm supp}\chi_{\ell}\subset \mathcal D_\ell^2\subset  \subset \Omega \,,\quad\chi_{\ell}=1~{\rm in}~\mathcal D_\ell^1\quad{\rm and}\quad|\nabla\chi_{\ell}|\leq\frac{C}{\ell}~{\rm in}~\R^{2}\,,
\end{equation}
where $C$ is a positive constant independent of $\ell$.\\
The minimizer $\psi$ satisfies 
\begin{equation}
\kappa^2\psi(1-|\psi|^2)=-(\nabla-i\kappa H\Ab)^2\psi\quad{\rm in}\,\Omega\,.
\end{equation}
 We multiply the above equation by $ \chi_{\ell} \bar \psi$, it results from an integration by parts that
\begin{align}\label{pr1}
\kappa^{2}\int_{\mathcal D_\ell^2} \chi_{\ell}(1-|\psi|^{2})|\psi|^{2}\,dx&=\int_{\mathcal D_\ell^2} (\nabla-i\kappa H \Ab)\psi\; \overline{\chi_{\ell}(\nabla-i\kappa H\Ab)\psi+\psi\nabla\chi_{\ell}}\,dx\nonumber\\
&=\int_{\mathcal D_\ell^2} \chi_{\ell}\, |(\nabla-i\kappa H \Ab)\psi|^{2}\,dx+\int_{\mathcal D_\ell^2} \nabla\chi_{\ell}\,\bar \psi\, (\nabla-i\kappa H \Ab)\psi\,dx\,.
\end{align}
 Using H\"older inequality, we have
\begin{equation}\label{pr2}
\left| \int_{\mathcal{D}_{\ell}^{2} } \nabla\chi_{\ell}\,\bar \psi \, (\nabla-i\kappa H \Ab)\psi\,dx\right|\leq \|(\nabla-i\kappa H\Ab)\psi\|_{L^{2}(\Omega,\mathbb C^2)}\, \| |\nabla\chi_{\ell}|\, \psi\|_{ L^{2} (\mathcal D_\ell^2)  }\,.
\end{equation}
Notice that $|\mathcal D_\ell^2|\leq C'\ell$. Using \eqref{chi-l} and the bound $\|\psi\|_{\infty}\leq 1$, we obtain
\begin{equation}\label{pr22}
\|| \nabla\chi_{\ell}|\, \psi\|_{L^{2}({\mathcal D_\ell^2})}\leq C''\ell^{-\frac{1}{2}}\,.
\end{equation}
Putting \eqref{pr21} and \eqref{pr22} into \eqref{pr2}, we get
\begin{equation}
\left| \int_{\mathcal D_\ell^2} \nabla\chi_{\ell}\,\bar \psi(\nabla-i\kappa H \Ab)\psi\,dx\right|  \leq C\,\kappa\,\ell^{-\frac{1}{2}}\,,
\end{equation}
and consequently
\begin{equation}
\int_{\mathcal{D}_{\ell}^{2} } \chi_{\ell}|(\nabla-i\kappa H \Ab)\psi|^{2}\,dx\leq \kappa^{2}\int_{\mathcal D_\ell^2} \chi_{\ell}(1-|\psi|^{2})|\psi|^{2}\,dx+C\,\kappa\,\ell^{-\frac{1}{2}}\,.
\end{equation}
The lemma easily follows from the control of the area of $\mathcal D_{\ell}^2$ and from observing that $\chi_\ell=1$ on $\mathcal D_{\ell}^1$\,.
\end{proof}
\begin{rem}\label{rem6.5}
We get a similar estimate by replacing  in the lemma $\Gamma$ by the boundary $\partial \mathcal D$ of a regular open set $\mathcal D$ compactly contained in $\Omega$.
\end{rem}
{\bf  End of the proof of Theorem~\ref{th-loc-enrgy}.\\}
The proof of \eqref{lw-loc-enrgy}  is already obtained in Theorem~\ref{lw-Eg}. Hence it remains only to give the proof of \eqref{up-loc-enrgy}.\\
We keep the same notation as in \eqref{inf-B}, \eqref{def-card-squares} and \eqref{def-D-supB}. If $(\psi,\Ab)$ is a minimizer of \eqref{eq-2D-GLf}, we start with \eqref{eq-GLen-D} and write,
\begin{equation}\label{eq-loc-en}
\mathcal E(\psi,\Ab;\mathcal{D})=\mathcal E_0(\psi,\Ab;\mathcal{D}_{\ell,\rho})+\mathcal E_0(\psi,\Ab;\mathcal{D}\setminus \mathcal{D}_{\ell,\rho}) + (\kappa H)^2 \int_{\Omega} |\curl\big(\Ab - \Fb\big)|^2\,dx \,.
\end{equation}
To estimate $\mathcal E_0(\psi,\Ab;\mathcal{D}_{\ell,\rho})$, we notice that,
$$\mathcal E_0(\psi,\Ab;\mathcal{D}_{\ell,\rho})=\sum_{\gamma\in\mathcal I_{\ell,\rho}}\mathcal E_0(\psi,\Ab;Q_{\gamma,\ell})\,.$$
Remark~\ref{est-error} tells us that the error terms in \eqref{equ-ub} are of order $\kappa H\,\ln\frac{\kappa}{H}$. Therfore, using \eqref{equ-ub}, we get
$$
\mathcal E_0(\psi,\Ab;\mathcal{D}_{\ell,\rho})\leq \kappa^2\sum_{\gamma\in\mathcal I_{\ell,\rho}}\hat{f}\left(\displaystyle\frac{H}{\kappa}|B_{0}(\widetilde{\gamma})|\right)\ell^{2}+\textit{o}\left(\kappa H\ln\frac{\kappa}{H}\right)\,,\qquad\text{as}~\kappa\longrightarrow+\infty\,.
$$
We select $\widetilde{\gamma}\in \overline{Q_{\ell}(\gamma)}$ such that $|B_{0}(\widetilde{\gamma})|=\underline{B}_{\gamma,\ell}$, where $\underline{B}_{\gamma,\ell}$ is defined in \eqref{inf-B}. By monotonicity of $\hat{f}$, $\hat{f}$ is Riemann-integrable and its integral is larger than any lower Riemann sum. Thus
\begin{equation}\label{up-e-Drho}
\mathcal E_0(\psi,\Ab;\mathcal{D}_{\ell,\rho})\leq \kappa^2\int_{\mathcal{D}_{\ell,\rho}} \hat{f}\left(\displaystyle\frac{H}{\kappa}|B_{0}(x)|\right)\,dx+\textit{o}\left(\kappa H\ln\frac{\kappa}{H}\right)\,,\qquad\text{as}~\kappa\longrightarrow+\infty\,.
\end{equation}
Moreover, recalling that $\hat f$ is a positive function and $\mathcal{D}_{\ell,\rho}\subset \mathcal{D}$, \eqref{up-e-Drho} becomes
\begin{equation}\label{up-e-D}
\mathcal E_0(\psi,\Ab;\mathcal{D}_{\ell,\rho})\leq \kappa^2\int_{\mathcal{D}} \hat{f}\left(\displaystyle\frac{H}{\kappa}|B_{0}(x)|\right)\,dx+\textit{o}\left(\kappa H\ln\frac{\kappa}{H}\right)\,,\qquad\text{as}~\kappa\longrightarrow+\infty\,.
\end{equation}
For estimating  $\mathcal E_0(\psi,\Ab;\mathcal{D}\setminus \mathcal{D}_{\ell,\rho})$, we use  Lemma~\ref{est-gamma}, Remark \ref{rem6.5} and we keep the same choice of $\ell$ and $\rho$ as in  \eqref{choice-ell-rho}, which implies $\rho\ll\ell$, we obtain  that
\begin{equation}
\int_{\mathcal{D}\setminus \mathcal{D}_{\ell,\rho}}|(\nabla-i\kappa H \Ab)\psi|^{2}\,dx\leq C(\,\kappa^{2}\,\ell+\,\kappa\,\ell^{-\frac{1}{2}})\,.
\end{equation}
Adding the second term in the energy leads to
\begin{equation}\label{up-e-Drho2}
\mathcal E_0(\psi,\Ab;\mathcal{D}\setminus \mathcal{D}_{\ell,\rho})\leq C(\,\kappa^{2}\,\ell+\,\kappa\,\ell^{-\frac{1}{2}})\,.
\end{equation}
The second term in the right hand side is controlled by the first one if
$$
\kappa \ell^\frac 32\gg1\,.
$$
This is effectively satisfied with our choice of $\ell$ and the condition on $H(\kappa)$.\\
 In order to obtain the term $\kappa^{2}\ell$ in \eqref{up-e-Drho2} comparatively small with  $\kappa H\ln\frac{\kappa}{H}$, we need a stronger condition than \eqref{cond-H} on $H(\kappa)$. In fact, we have
$$\frac{\kappa^{2}\ell}{\kappa H\ln\frac{\kappa}{H}}=\left(\frac{\kappa^{3}}{H^{5}}\right)^{\frac{1}{4}} \frac{1}{\ln\frac{\kappa}{H}}\,,$$
and thanks to \eqref{cond-H2},  as $\kappa\longrightarrow+\infty\,,$
$$\frac{1}{\ln\frac{\kappa}{H}}\ll1\quad{\rm and}\quad \frac{\kappa^{3}}{H^{5}}\leq C\,,$$
where $C$ is a positive constant.\\
This implies that
$$
\kappa^{2}\ell=\textit{o}\left(\kappa H \ln\frac{\kappa}{H}\right)\,,
$$
and consequently
\begin{equation}\label{up-e-Drho3}
\mathcal E_0(\psi,\Ab;\mathcal{D}\setminus \mathcal{D}_{\ell,\rho})=\textit{o}\left(\kappa H\ln\frac{\kappa}{H}\right)\,.
\end{equation}
Corollary~\ref{estimate-magnetic-energy} tells us that, under Assumption~\eqref{cond-H},
\begin{equation}\label{up-mag-en}
(\kappa H)^{2}\int_{\Omega}|\curl\Ab-B_{0}|^{2}\,dx=\textit{o}\left(\kappa H \ln\frac{\kappa}{H}\right)\,,\qquad\text{as}~\kappa\longrightarrow+\infty\,.
\end{equation}
Therefore, by collecting \eqref{up-e-D}, \eqref{up-e-Drho3} and \eqref{up-mag-en} and inserting then into \eqref{eq-loc-en}, we finish the proof of \eqref{up-loc-enrgy}.
\section{Vortices and concentration of the energy}\label{section7}
This section is devoted to the proof of Theorem~\ref{distribution-vortices}.  We keep the choice of $\ell$ given in \eqref{choice-ell-rho}:
$$
\ell=(\kappa H)^{-\frac{1}{4}}\,,
$$
but  we select $\rho$ and $\alpha$ as follows:
\begin{equation}\label{rho2}
\rho=\left(\ln\frac{\kappa}{H}\right)^{-\frac{1}{2}}\,,\qquad \alpha=\frac{1}{2}\,.
\end{equation}
\subsection{Energy in a $\rho$-admissible box}
If $(\psi,\Ab)\in H^{1}(\Omega;\C)\times H^{1}_{\rm div}(\Omega;\R)$, we consider for all $\rho$-admissible pair $(\ell, x_{0})$ the local energy
in $Q_\ell(x_0)$:
$$
\mathcal{E}_{0}(\psi,\Ab,Q_{\ell}(x_0))=\int_{Q_{\ell}(x_0)}\left(|(\nabla-i\kappa H\Ab)\psi|^{2}+\frac{\kappa^{2}}{2}(1-|\psi|^{2})^{2}\right)\,dx\,.
$$
\subsection{Division of the square $Q_{\ell}(x_0)$}~\\
Let $H=H(\kappa)$ be a function satisfying \eqref{cond-H}. For reasons that will become clear in Proposition~\ref{serfaty}, we need to divide $Q_{\ell}(x_0)$ into $\mathcal N= M^{2}$ disjoint open squares  $(Q^{j}_{\delta(\kappa)})_{j\in \mathcal{J}}$ such that 
$$Q_{\ell}(x_0)=\cup_{j\in \mathcal{J}}\overline{Q^{j}_{\delta(\kappa)}}\,,$$
with
\begin{equation}\label{def-M}
M=\left[2^{\frac{7}{8}}\,(\kappa H)^{\frac{1}{4}}\,\left(\ln\frac{\kappa}{H}\right)^{-\frac{7}{8}} \right]\,,
\end{equation}
where for $t \in\R$, $[t]$ denotes the integer part of $t$.\\
The side length of theses squares is consequently
\begin{equation}\label{delta(kappa)}
\delta(\kappa)=\frac{\ell}{M}\sim 2^{-\frac{7}{8}}(\kappa H)^{-\frac{1}{2}}\left(\ln\frac{\kappa}{H}\right)^{\frac{7}{8}}\,.
\end{equation}
Let us introduced for all $\rho$-admissible triple $(\ell,x_0,\widetilde x_0)$ the functions $b$ and $R$ by
\begin{equation}\label{def-bR3}
R(\kappa,H,\widetilde{x}_{0})=2^{-\frac{7}{8}}\left(\ln\frac{\kappa}{H}\right)^{\frac{7}{8}}|B_{0}(\widetilde{x}_{0})|^{\frac{1}{2}}\quad{\rm and}\quad b(\kappa,H,\widetilde{x}_{0})=\frac{H}{\kappa}|B_{0}(\widetilde{x}_{0})|\,.
\end{equation}
Notice that $b(\kappa,H,\widetilde{x}_{0})$ and $\displaystyle\frac{1}{R(\kappa,H,\widetilde{x}_{0})}$ are {\it uniformly $\textit{o}(1)$} as $\kappa\longrightarrow+\infty$, in the following sense:\\
 For all $\epsilon > 0$ there exists $\kappa_{0}>0$  such that $\forall\kappa\geq \kappa_{0}$, $H$ satisfying \eqref{cond-H}, $\rho$ introduced in \eqref{rho2} and any $\rho$-admissible triple  $(\ell,x_0,\widetilde{x}_{0})$   $$ |b(\kappa,H,\widetilde{x}_{0})|+\frac{1}{R(\kappa,H,\widetilde{x}_{0})}<\epsilon\,.$$
In fact, we have as $\kappa\longrightarrow+\infty$
\begin{equation*} 
R(\kappa,H,\widetilde{x}_{0})\geq 2^{-\frac{7}{8}}\left(\ln\frac{\kappa}{H}\right)^{\frac{7}{8}}\rho^{\frac{1}{2}}\geq \frac 1C  \left(\ln\frac{\kappa}{H}\right)^{\frac{5}{8}} \gg 1\,.
\end{equation*}
Since $B_{0}\in C^{\infty}(\overline{\Omega})$, we have also  
\begin{equation}\label{asy-b}
0 < b(\kappa,H,\widetilde{x}_{0}) \leq  \frac{H}{\kappa} \overline{\beta}_0 \ll 1\,,
\end{equation}
where 
\begin{equation}\label{beta0}
\overline{\beta}_0= \sup_{x\in \overline{\Omega}} |B_0(x)|\,.
\end{equation}
 More precisely, let
\begin{equation}
b(\kappa,H,\widetilde{x}_{0})=\hat{b}(\kappa,H,\beta)\qquad{\rm and}\qquad R(\kappa,H,\widetilde{x}_{0})=\hat{R}(\kappa,H,\beta)\,,
\end{equation}
where $\beta=|B_{0}(\widetilde{x}_{0})|$.\\
We define the function:
\begin{equation}\label{h}
h(\kappa,H)=\max\left(\left(\ln\frac{\kappa}{H}\right)^{-\frac{3}{8}},\sup_{\overline{\beta}_0 \geq \beta \geq  \left(\ln\frac{\kappa}{H}\right)^{-\frac{1}{2}}}{\rm err} (\hat{b}(\kappa,H,\beta),\hat{R}(\kappa,H,\beta))\right)\,,
\end{equation} 
where ${\rm err}(b,R)$ is defined in Proposition~\ref{err}.\\
Notice that $h$ satisfies
\begin{equation}\label{app-ha}
h(\kappa,H)=\textit{o}(1)\,,\qquad{\rm as}~\kappa\longrightarrow+\infty\,.
\end{equation}

 Next, we will use a method introduced by E. Sandier and S. Serfaty in \cite{SS3}. We distinguish in the family indexed by $\mathcal J $ two types of squares respectively called the `\textit{nice squares}' $(Q^{j}_{\delta(\kappa)})$ 
 which are indexed in $\mathcal J^n$ and the  `\textit{bad squares}' $(Q^{j}_{\delta(\kappa)})$ indexed in $\mathcal J^b $. The set $\mathcal J^n $ is the set of indices $ j\in \mathcal J $ such that
\begin{multline}\label{nice-squares}
\mathcal{E}_{0}(\psi,\sigma_{\ell}|B_{0}(\widetilde{x}_{0})|\Ab_{0}(x-x_{0})+\nabla\varphi,Q^{j}_{\delta(\kappa)})\leq  \delta(\kappa)^{2}\kappa^{2}\hat{f}\left(\displaystyle\frac{H}{\kappa}|B_{0}(\widetilde{x}_{0})|\right)(1+\,h(\kappa,H)^{\frac{1}{2}}).
\end{multline}
The set $\mathcal J^b $ is the set of indices $ j\in \mathcal J $ such that
\begin{multline}\label{bad-squares}
\mathcal{E}_{0}(\psi,\sigma_{\ell}|B_{0}(\widetilde{x}_{0})|\Ab_{0}(x-x_{0})+\nabla\varphi,Q^{j}_{\delta(\kappa)})>\delta(\kappa)^{2}\kappa^{2}\hat{f}\left(\displaystyle\frac{H}{\kappa}|B_{0}(\widetilde{x}_{0})|\right)(1+h(\kappa,H)^{\frac{1}{2}}).
\end{multline}
Hence we have $\mathcal{J}=\mathcal J^n  \cup \mathcal J^b $. We denote by  $\mathcal{N}^{g}$  the cardinal of  $\mathcal J^n $ and by $\mathcal{N}^{b}$   the cardinal of $\mathcal J^b$.
\begin{lemma}\label{N'=o(N)}
There exist positive constants $C$ and $\kappa_{0}$ such that if  $\kappa\geq\kappa_{0}$, then
\begin{equation}\label{eq-N'=o(N)}
\mathcal N^b\leq C\,\frac{h(\kappa,H)^{\frac{1}{2}}}{1-h(\kappa,H)^{\frac{1}{2}}}\mathcal N^n\,,
\end{equation}
where $h$ is introduced in \eqref{h}.
\end{lemma}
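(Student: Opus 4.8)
Fix a $\rho$-admissible triple $(\ell,x_0,\widetilde x_0)$, with $\ell$, $\rho$, $\alpha$ as in \eqref{choice-ell-rho} and \eqref{rho2} (all estimates below being uniform in the triple, for $\kappa$ large); abbreviate $b=b(\kappa,H,\widetilde x_0)$, $R=R(\kappa,H,\widetilde x_0)$ as in \eqref{def-bR3} and write $\mathcal A:=\sigma_\ell|B_0(\widetilde x_0)|\Ab_0(\cdot-x_0)+\nabla\varphi$ for the reference magnetic potential appearing in \eqref{nice-squares}--\eqref{bad-squares}. The plan is to insert into the identity
$$
\mathcal E_0(\psi,\mathcal A;Q_\ell(x_0))=\sum_{j\in\mathcal J^n}\mathcal E_0(\psi,\mathcal A;Q^j_{\delta(\kappa)})+\sum_{j\in\mathcal J^b}\mathcal E_0(\psi,\mathcal A;Q^j_{\delta(\kappa)})
$$
a lower bound for each term on the right, the upper bound of Proposition~\ref{prop-ub2} on the left, and then to solve for $\mathcal N^b$.

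First I would establish a \emph{uniform} lower bound for the reference energy on \emph{every} small square. Gauging away $\nabla\varphi$ and the constant part of $\Ab_0(\cdot-x_0)$ and rescaling by $y=\tfrac{R}{\delta(\kappa)}(x-x_0^j)$ exactly as in \eqref{E0<}--\eqref{F>}, one gets, for some $\widehat u_j\in H^1(Q_R)$,
$$
\mathcal E_0(\psi,\mathcal A;Q^j_{\delta(\kappa)})=\tfrac1b\,F^{\sigma_\ell}_{b,Q_R}(\widehat u_j)\ \geq\ \tfrac1b\,e_N(b,R)\ =\ \delta(\kappa)^2\kappa^2\,\frac{e_N(b,R)}{R^2}\,,
$$
using \eqref{nE=pE} and $R^2/b=\delta(\kappa)^2\kappa^2$. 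Since $b$ and $1/R$ are uniformly $\textit{o}(1)$ over $\rho$-admissible triples, \eqref{eN<f} gives $e_N(b,R)/R^2\geq\hat f(b)(1-{\rm err}(b,R))$; and because $|B_0(\widetilde x_0)|\in[\rho,\overline{\beta}_0]$ with $\rho=(\ln\frac\kappa H)^{-1/2}$, the definition \eqref{h} of $h$ gives ${\rm err}(b,R)\leq h(\kappa,H)$. Hence $\mathcal E_0(\psi,\mathcal A;Q^j_{\delta(\kappa)})\geq\delta(\kappa)^2\kappa^2\hat f(b)\,(1-h(\kappa,H))$ for every $j\in\mathcal J$.

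Next I would sum: over $j\in\mathcal J^n$ with the bound just obtained, over $j\in\mathcal J^b$ with the defining inequality \eqref{bad-squares}, and bound the left side from above by Proposition~\ref{prop-ub2} applied with $\alpha=\tfrac12$ and $\delta$ as in \eqref{choice-delta-alpha}, so that its remainder is $\widehat C\,\kappa H(\ln\frac\kappa H)^{1/8}$ per unit area. Using $\ell^2=\mathcal N\,\delta(\kappa)^2=(\mathcal N^n+\mathcal N^b)\,\delta(\kappa)^2$ and dividing by $\delta(\kappa)^2\kappa^2\hat f(b)>0$ (positive, since $b>0$ forces $\hat f(b)>0$ by \eqref{main-f(b)} and monotonicity), one arrives at
$$
\mathcal N^n(1-h)+\mathcal N^b\big(1+h^{1/2}\big)\ \leq\ \mathcal N^n+\mathcal N^b+\frac{\mathcal N\,\widehat C\,\kappa H\,(\ln\frac\kappa H)^{1/8}}{\kappa^2\hat f(b)}\,,
$$
i.e. $\mathcal N^b\,h^{1/2}-\mathcal N^n\,h\leq\mathcal N\,\widehat C\,\kappa H(\ln\frac\kappa H)^{1/8}\big/\big(\kappa^2\hat f(b)\big)$.

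The one genuinely delicate point — and the reason for the particular exponents in \eqref{def-M}, \eqref{delta(kappa)} and \eqref{rho2} — is to show the last error term is $\mathcal O(\mathcal N\,h(\kappa,H))$. Since $|B_0(\widetilde x_0)|>\rho=(\ln\frac\kappa H)^{-1/2}$, monotonicity of $\hat f$ with \eqref{main-f(b)} gives, for $\kappa$ large, $\hat f(b)\geq\hat f\big(\tfrac H\kappa(\ln\tfrac\kappa H)^{-1/2}\big)\geq c\,\tfrac H\kappa(\ln\tfrac\kappa H)^{1/2}$, whence
$$
\frac{\widehat C\,\kappa H\,(\ln\frac\kappa H)^{1/8}}{\kappa^2\hat f(b)}\ \leq\ C'\,\Big(\ln\tfrac\kappa H\Big)^{-3/8}\ \leq\ C'\,h(\kappa,H)\,,
$$
the last step being exactly the floor $(\ln\frac\kappa H)^{-3/8}$ built into \eqref{h}. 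Substituting, $\mathcal N^b(h^{1/2}-C'h)\leq(1+C')h\,\mathcal N^n$; and since $h=\textit{o}(1)$, for $\kappa$ large $h^{1/2}-C'h\geq\tfrac12 h^{1/2}$, so $\mathcal N^b\leq 2(1+C')h^{1/2}\mathcal N^n\leq 2(1+C')\tfrac{h^{1/2}}{1-h^{1/2}}\mathcal N^n$, which is \eqref{eq-N'=o(N)} with $C=2(1+C')$. I expect this absorption of a fixed-exponent remainder into the slack $h^{1/2}$ to be the main obstacle; everything else — the gauge change, the rescaling, the Riemann-sum bookkeeping — is routine and parallels the arguments already used for Propositions~\ref{prop-lb} and \ref{prop-ub2}.
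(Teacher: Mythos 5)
Your proposal is correct and follows essentially the same route as the paper: the per-square lower bound $\mathcal E_0\geq\delta(\kappa)^2\kappa^2\hat f(b)(1-h)$ via $e_N(b,R)$, \eqref{nE=pE} and \eqref{eN<f}, the global upper bound on $Q_\ell(x_0)$ from Proposition~\ref{prop-ub2} with $\alpha=\tfrac12$, the lower bound $\kappa^2\hat f(b)\geq c\,\kappa H(\ln\tfrac\kappa H)^{1/2}$ coming from $\rho=(\ln\tfrac\kappa H)^{-1/2}$, and the absorption of the $(\ln\tfrac\kappa H)^{1/8}$ remainder into $h$ via the $(\ln\tfrac\kappa H)^{-3/8}$ floor in \eqref{h}. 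The only (harmless) difference is bookkeeping: you sum the per-square lower bounds and compare with the global upper bound, whereas the paper isolates $\sum_{j\in\mathcal J^b}$ from the two-sided deviation estimate \eqref{app-main-term2}; the resulting inequality for $\mathcal N^b$ is the same.
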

\begin{proof}
Recall that  $\Ab_{0}$ is the magnetic potential introduced in \eqref{eq-hc2-mpA0}, $\phi_{x_{0}}$ is defined in \eqref{alpha} and that, for $\widetilde{x}_{0}\in \overline{Q_{\ell}(x_{0})}$, $\varphi_{x_{0},\widetilde{x}_{0}}$ is the function satisfying \eqref{F-A}.\\
Having in mind the definition of $b$ and $R$ in \eqref{def-bR3} and their properties, and using \eqref{eN<f}, we get from \eqref{E0<} and \eqref{F>} the following inequality
\begin{align}\label{up-E0-smal}
\mathcal{E}_{0}(\psi,\sigma_{\ell}|B_{0}(\widetilde{x}_{0})|\Ab_{0}(x-x_{0})+\nabla\varphi,Q^{j}_{\delta(\kappa)})&\geq\frac{e_{N}(b,R)}{b}\nonumber\\
&\geq \frac{R^{2}}{b}\hat{f}(b)\left(1-{\rm err}(b,R)\right)\,,
\end{align}
 where $\varphi=\phi_{x_{0}}+\varphi_{x_{0},\widetilde{x}_{0}}$, $e_{N}$ is introduced in \eqref{eN}, $b=b(\kappa,H,\widetilde{x}_{0})$ and $R=R(\kappa,H,\widetilde{x}_{0})$.\\
As a consequence of \eqref{h}, \eqref{up-E0-smal} becomes
\begin{equation}\label{up-E0-smal2}
\mathcal{E}_{0}(\psi,\sigma_{\ell}|B_{0}(\widetilde{x}_{0})|\Ab_{0}(x-x_{0})+\nabla\varphi,Q^{j}_{\delta(\kappa)})\geq \kappa^{2}\delta(\kappa)^{2}\hat{f}\left(\displaystyle\frac{H}{\kappa}|B_{0}(\widetilde{x}_{0})|\right)(1-\,h(\kappa,H))\,.
\end{equation}
 Notice that
\begin{equation}\label{area-square}
|Q_{\ell}(x_{0})| =\sum_{j\in\mathcal{J}}\left|Q^{j}_{\delta(\kappa)}\right| =(\mathcal N^n+\mathcal N^b)\, \delta(\kappa)^{2}\,.
\end{equation}
Thus, we may write
\begin{equation}\label{est-E0-smal}
\sum_{j\in\mathcal{J}}\mathcal{E}_{0}\left(\psi,\sigma_{\ell}|B_{0}(\widetilde{x}_{0})|\Ab_{0}(x-x_{0})+\nabla\varphi,Q^{j}_{\delta(\kappa)}\right)=\mathcal{E}_{0}(\psi,\sigma_{\ell}|B_{0}(\widetilde{x}_{0})|\Ab_{0}(x-x_{0})+\nabla\varphi,Q_{\ell}(x_0))\,.
\end{equation}
When \eqref{cond-H} is satisfied, we have uniformly
$$
\ell^{2}\kappa H\rho=(\kappa H)^{\frac{1}{2}}\,\left(\ln\frac{\kappa}{H}\right)^{-\frac{1}{2}}\gg1\,,\qquad{\rm as}~\kappa\longrightarrow+\infty\,.
$$
Hence the assumptions of Proposition~\ref{prop-ub2} are satisfied. Putting \eqref{est-E0-smal} into \eqref{est-A0-final}, using \eqref{rho2} and \eqref{area-square}, we get
\begin{multline}\label{app-main-term}
\left|\sum_{j\in\mathcal{J}}\left[\mathcal{E}_{0}\left(\psi,\sigma_{\ell}|B_{0}(\widetilde{x}_{0})|\Ab_{0}(x-x_{0})+\nabla\varphi,Q^{j}_{\delta(\kappa)}\right)-\kappa^{2}\delta(\kappa)^{2}\hat{f}\left(\displaystyle\frac{H}{\kappa}|B_{0}(\widetilde{x}_{0})|\right)\right]\right|\\
\leq C\,(\mathcal N^b+\mathcal N^n)\,\delta(\kappa)^{2}\,\kappa H\left(\ln\frac{\kappa}{H}\right)^{\frac{1}{8}}\,.
\end{multline}
Using the monotonicity of $\hat{f}$ and remembering that $ (\ell,x_0,\widetilde x_0)$ is a $\rho$-admissible triple, we get,
\begin{equation}\label{lb-f1}
\hat{f}\left(\frac{H}{\kappa}|B_{0}(\widetilde{x}_{0})|\right)\geq\hat{f}\left(\frac{H}{\kappa}\rho\right)\,.
\end{equation}
Using \eqref{main-f(b)}, \eqref{rho2} and \eqref{asy-b}, we obtain as $\kappa\longrightarrow+\infty$
\begin{align}\label{lb-f2}
\kappa^{2}\,\hat{f}\left(\frac{H}{\kappa}\rho\right)&\geq \kappa H\rho \left(\ln\frac{\kappa}{H \rho}\right)\left(1+\hat{s}\left(\frac{H}{\kappa}\rho\right)\right)\\
&\geq \kappa H\rho \left(\ln\frac{\kappa}{H}+\ln\frac{1}{\rho}\right)\left(1+\hat{s}\left(\frac{H}{\kappa}\rho\right)\right)\\
&\geq\kappa H\left(\ln\frac{\kappa}{H}\right)^{\frac{1}{2}}\left(1+\hat{s}_{1}(\kappa,H)\right)\,,
\end{align}
where $\hat{s}_{1}(\kappa,H)=\hat{s}\left(\frac{H}{\kappa}\left(\ln\frac{\kappa}{H}\right)^{-\frac{1}{2}}\right)$ is uniformly $\textit{o}(1)$ for $H$ satisfying \eqref{cond-H}.\\
Collecting \eqref{lb-f1}-\eqref{lb-f2}, we get for $\kappa$ sufficiently large
\begin{align*}
\kappa^{2}\,\hat{f}\left(\frac{H}{\kappa}|B_{0}(\widetilde{x}_{0})|\right)&\geq \kappa H\,\left(\ln\frac{\kappa}{H}\right)^{\frac{1}{2}}(1+\hat{s}_{1}(\kappa,H))\nonumber\\
&\geq \frac{\kappa H}{2}\,\left(\ln\frac{\kappa}{H}\right)^{\frac{1}{2}}\,.
\end{align*}
Multiplying both sides by $\left(\ln\frac{\kappa}{H}\right)^{-\frac{3}{8}}$ and using the definition of $h$ in \eqref{h}, we get
\begin{equation}\label{lw-f3}
\kappa^{2}\,\hat{f}\left(\frac{H}{\kappa}|B_{0}(\widetilde{x}_{0})|\right)\,h(\kappa,H)\geq \frac{\kappa H}{2}\,\left(\ln\frac{\kappa}{H}\right)^{\frac{1}{8}}\,.
\end{equation}
Puting \eqref{lw-f3} into \eqref{app-main-term}, we obtain
\begin{multline}\label{app-main-term2}
\left|\sum_{j\in\mathcal{J}}\left[\mathcal{E}_{0}\left(\psi,\sigma_{\ell}|B_{0}(\widetilde{x}_{0})|\Ab_{0}(x-x_{0})+\nabla\varphi,Q^{j}_{\delta(\kappa)}\right)-\kappa^{2}\delta(\kappa)^{2}\hat{f}\left(\displaystyle\frac{H}{\kappa}|B_{0}(\widetilde{x}_{0})|\right)\right]\right|\\
\leq C\,(\mathcal N^b+\mathcal N^n)\,\kappa^{2}\,\delta(\kappa)^{2}\,\hat{f}\left(\displaystyle\frac{H}{\kappa}|B_{0}(\widetilde{x}_{0})|\right)\,h(\kappa,H)\,.
\end{multline}
Using \eqref{bad-squares}, \eqref{up-E0-smal2} and \eqref{app-main-term2}, we may write
\begin{align*}
&\mathcal N^b\,\kappa^{2}\,\delta(\kappa)^{2}\,\hat{f}\left(\displaystyle\frac{H}{\kappa}|B_{0}(\widetilde{x}_{0})|\right)\,h(\kappa,H)^{\frac{1}{2}}\\
&\qquad\qquad\leq \sum_{j\in\mathcal J^b }\left[\mathcal{E}_{0}\left(\psi,\sigma_{\ell}|B_{0}(\widetilde{x}_{0})|\Ab_{0}(x-x_{0})+\nabla\varphi,Q^{j}_{\delta(\kappa)}\right)-\kappa^{2}\delta(\kappa)^{2}\hat{f}\left(\displaystyle\frac{H}{\kappa}|B_{0}(\widetilde{x}_{0})|\right)\right]\\
&\qquad\qquad\leq\sum_{j\in\mathcal{J}}\left[\mathcal{E}_{0}\left(\psi,\sigma_{\ell}|B_{0}(\widetilde{x}_{0})|\Ab_{0}(x-x_{0})+\nabla\varphi,Q^{j}_{\delta(\kappa)}\right)-\kappa^{2}\delta(\kappa)^{2}\hat{f}\left(\displaystyle\frac{H}{\kappa}|B_{0}(\widetilde{x}_{0})|\right)\right]\\
&\qquad\qquad\qquad\qquad\qquad\qquad\qquad\qquad\qquad\qquad\qquad+\,\sum_{j\in\mathcal J^n }\kappa^{2}\delta(\kappa)^{2}\hat{f}\left(\displaystyle\frac{H}{\kappa}|B_{0}(\widetilde{x}_{0})|\right)\, h(\kappa,H)\\
&\qquad\qquad\leq  C\,(\mathcal N^b+\mathcal N^n)\kappa^{2}\delta(\kappa)^{2}\hat{f}\left(\displaystyle\frac{H}{\kappa}|B_{0}(\widetilde{x}_{0})|\right)h(\kappa,H)+\mathcal N^n\kappa^{2}\delta(\kappa)^{2}\hat{f}\left(\displaystyle\frac{H}{\kappa}|B_{0}(\widetilde{x}_{0})|\right)h(\kappa,H).
\end{align*}
We divide both sides by $\kappa^{2}\delta(\kappa)^{2}\hat{f}\left(\displaystyle\frac{H}{\kappa}|B_{0}(\widetilde{x}_{0})|\right)\,h(\kappa,H)^{\frac{1}{2}}$ to get \eqref{eq-N'=o(N)}.
\end{proof}
\begin{rem}
Using \eqref{app-ha} and \eqref{cond-H}, we obtain uniformly
\begin{equation}\label{rem-N'=o(N)}
\mathcal N^b\ll\mathcal N^n\,,\qquad{\rm as}~\kappa\longrightarrow+\infty\,.
\end{equation}
More precisely, we mean that $\mathcal N^b =\mathcal N^n\,\textit{e}(\kappa,H,\ell,\widetilde{x}_{0},x_{0})$, with $\textit{e}(\kappa,H,\ell,\widetilde{x}_{0},x_{0})$ is uniformly $\textit{o}(1)$ for any $\kappa\geq\kappa_{0}$, any $\rho$-admissible triple $(\ell,\widetilde{x}_{0},x_{0})$, any $H$ satisfying \eqref{cond-H}.
\end{rem}

\subsection{The results of Sandier-Serfaty.} 
Now we recall an important result of Sandier-Serfaty \cite{SS3}. Define the energy of $(u,A)\in H^{1}(\mathcal{D};\C)\times H^{1}_{\rm div}(\mathcal{D};\R)$ in a domain $\mathcal{D}\subset\R^{2}$ as follows
\begin{equation}\label{JK}
J_{\mathcal{D}}(u,A)=\int_{\mathcal{D}}|(\nabla-iA)u|^{2}+\frac{\kappa^{2}}{2}(1-|u|^{2})^{2}+|\curl A-h_{\rm ex}|^{2}\,dx\,.
\end{equation}
The next proposition is essentially proved\footnote{We replaced $\varepsilon$ by $\frac{1}{\kappa}$. We can indeed verify that only the upper bound of $J_{K}(u,A)$ is needed with no additional condition on $f(\varepsilon)$ and that the $\textit{o}(1)$ are actually uniform under uniform assumptions. Note also that we do not use in this proposition that $(u,A)$ is a critical point of $J_{\Omega}$.} in \cite[Proposition~5.1]{SS3}.
\begin{prop}\label{serfaty}
Let $\hat{h}: (0,+\infty) \longrightarrow (0,+\infty)$ such that $\lim_{t\longrightarrow+\infty} \hat{h}(t)=0$, there exist two functions $s_{1},\,s_{2}:(0,+\infty) \longrightarrow (0,+\infty)$ satisfying
\begin{equation}\label{lim}
\lim_{t\longrightarrow+\infty} s_{1}(t)=0\,,\qquad\lim_{t\longrightarrow+\infty} s_{2}(t)=0\,.
\end{equation}
 Assume that $h_{\rm ex}$ is a function of $\kappa$ and $K$ is a square of side length $\gamma(\kappa)$ such that
\begin{equation}\label{cond-hex}
|\ln\kappa|\ll h_{\rm ex}\ll\kappa^{2}\qquad{\rm and}\qquad\ln\frac{\kappa}{\sqrt{h_{\rm ex}}}\ll h_{\rm ex}\gamma(\kappa)^{2}\ll {\rm min}\left(h_{\rm ex},\left(\ln\frac{\kappa}{\sqrt{h_{\rm ex}}}\right)^{2}\right)\,,\quad{\rm as}~ \kappa\longrightarrow \infty\,.
\end{equation}
If $(u,A)\in C^{1}(\overline{K};\C)\times C^{1}(\overline{K};\R^{2})$ verifies 
\begin{equation}\label{cond-hex2}
J_{K}(u,A)\leq h_{\rm ex}\gamma(\kappa)^{2}\ln\frac{\kappa}{\sqrt{h_{\rm ex}}}(1+\hat{h}(\kappa))\,,
\end{equation}
then, there exist  disjoint disks $\left(D(a_{i},r_{i})\right)_{i=1}^{m}$ such that:
\begin{enumerate}
\item $\sum r_{i}\leq h_{\rm ex}^{-\frac{1}{2}}$
\item $|u|>\frac{1}{2}$ on $\cup_{i}\partial D(a_{i},r_{i})$
\item If $d_{i}={\rm deg}\left(\frac{u}{|u|}, \partial D(a_{i},r_{i})\right)$\,, then, as $\kappa\longrightarrow+\infty$
\end{enumerate}
\begin{equation}
2\pi\sum_{i=1}^{m}d_{i}\geq h_{\rm ex}\gamma(\kappa)^{2}(1-s_{1}(\kappa))\qquad{\rm and}\qquad 2\pi\sum_{i=1}^{m}|d_{i}|\leq h_{\rm ex}\gamma(\kappa)^{2}(1+s_{2}(\kappa))\,.
\end{equation}
\end{prop}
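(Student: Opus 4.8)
The plan is to follow the argument of Sandier--Serfaty \cite{SS3}, transcribed to the normalization used here (inverse coherence length $1/\kappa$); as the footnote indicates, only the upper bound \eqref{cond-hex2} on $J_K(u,A)$ is used, and the Ginzburg--Landau equations play no role. The two ingredients are the magnetic vortex ball construction, which turns the energy bound into a family of disjoint disks carrying integer degrees together with a lower bound for $J_K$ in terms of those degrees, and the Jacobian (vorticity) estimate, which pins down the total degree through the induced field $\curl A$. First I would apply the ball construction to $u$ on $K$ --- or rather on a concentric square $K'$ of almost the same side length, selected by a slicing argument so that the traces of $(\nabla-iA)u$ and of the current on $\partial K'$ are controlled. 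After the usual preliminary step that uses $\tfrac{\kappa^{2}}{2}(1-|u|^{2})^{2}$ to grow all cores to a common scale $\gtrsim 1/\kappa$, this produces finitely many disjoint disks $\big(D(a_i,r_i)\big)_i$ with $\{|u|\le\tfrac12\}\subset\bigcup_i D(a_i,r_i)$, with $\sum_i r_i$ below any radius $r_0$ we prescribe, with $|u|>\tfrac12$ on $\bigcup_i\partial D(a_i,r_i)$ so that $d_i=\deg(\tfrac{u}{|u|},\partial D(a_i,r_i))$ is well defined, and a lower bound of the shape
\[
\int_{K'\setminus\bigcup_i D(a_i,r_i)}\!\Big(|(\nabla-iA)u|^{2}+\tfrac{\kappa^{2}}{2}(1-|u|^{2})^{2}\Big)\,dx\;\ge\;2\pi\Big(\sum_i|d_i|\Big)\Big(\ln\frac{r_0}{\sum_i r_i}-C\Big)\,.
\]
Taking $r_0\approx h_{\rm ex}^{-1/2}$, the inter-vortex spacing one expects when the vorticity density is $h_{\rm ex}$, and using $\sum_i r_i\gtrsim\kappa^{-1}$, the parenthesis equals $\ln\frac{\kappa}{\sqrt{h_{\rm ex}}}(1-o(1))$ uniformly under \eqref{cond-hex}, which is exactly why the threshold in \eqref{cond-hex2} has that form. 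Conclusions (1) and (2) are then immediate, and comparing the displayed bound with \eqref{cond-hex2} --- which dominates the left-hand side once the nonnegative field term is discarded --- yields $2\pi\sum_i|d_i|\le h_{\rm ex}\gamma(\kappa)^{2}(1+s_2(\kappa))$ with $s_2\to0$, the second half of conclusion (3).

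For the first half of (3) I would use the vorticity. With $j=(iu,(\nabla-iA)u)$ the gauge-invariant current, the Jacobian estimate says that $\mu(u,A):=\curl(j+A)$ is close, tested against Lipschitz functions on $K'$, to $2\pi\sum_i d_i\,\delta_{a_i}$, with an error controlled by $\big(\sum_i r_i\big)$ times the kinetic energy, hence negligible. Integrating over $K'$ and using $\int_{K'}\curl j=\int_{\partial K'}j\cdot\tau=o\big(h_{\rm ex}\gamma(\kappa)^{2}\big)$ (the slice choice above), one obtains $2\pi\sum_i d_i=\int_{K'}\curl A+o\big(h_{\rm ex}\gamma(\kappa)^{2}\big)$. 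Since $\int_{K'}|\curl A-h_{\rm ex}|^{2}\le J_K(u,A)\le h_{\rm ex}\gamma(\kappa)^{2}\ln\frac{\kappa}{\sqrt{h_{\rm ex}}}(1+\hat{h}(\kappa))$, Cauchy--Schwarz gives $\big|\int_{K'}\curl A-h_{\rm ex}|K'|\big|\le|K'|^{1/2}\big(\int_{K'}|\curl A-h_{\rm ex}|^{2}\big)^{1/2}\ll h_{\rm ex}\gamma(\kappa)^{2}$, because $\ln\frac{\kappa}{\sqrt{h_{\rm ex}}}\ll h_{\rm ex}$ follows from $|\ln\kappa|\ll h_{\rm ex}\ll\kappa^{2}$. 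Hence $2\pi\sum_i d_i=h_{\rm ex}\gamma(\kappa)^{2}(1+o(1))\ge h_{\rm ex}\gamma(\kappa)^{2}(1-s_1(\kappa))$, which is the first half of (3).

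The main obstacle is uniformity, precisely the point stressed in the footnote: every $o(1)$ above must depend only on $\hat{h}$ and on the quantitative form of \eqref{cond-hex}, never on the pair $(u,A)$. This forces one to run the ball construction with error terms that are functions of the listed parameters only, and to use the two-sided hypothesis on $h_{\rm ex}\gamma(\kappa)^{2}$ in both directions: the upper bound $h_{\rm ex}\gamma(\kappa)^{2}\ll(\ln\frac{\kappa}{\sqrt{h_{\rm ex}}})^{2}$ makes the vortex self-energy $2\pi\sum_i|d_i|\ln\frac{\kappa}{\sqrt{h_{\rm ex}}}$ dominate the mutual interaction, while the lower bound $\ln\frac{\kappa}{\sqrt{h_{\rm ex}}}\ll h_{\rm ex}\gamma(\kappa)^{2}$ forces $\sum_i|d_i|\to\infty$, so that the factors $(1\pm o(1))$ carry content. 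The remaining technical point is the boundary slicing making $\int_{\partial K'}j\cdot\tau$ and $\int_{\partial K'}|(\nabla-iA)u|$ small; this is handled by a mean-value argument over a thin family of concentric squares together with the global $L^{2}$ bound on $(\nabla-iA)u$ coming from $J_K(u,A)$.
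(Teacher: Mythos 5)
The paper does not actually prove Proposition~\ref{serfaty}: it imports it wholesale from \cite[Proposition~5.1]{SS3}, with a footnote only remarking that the proof there uses nothing beyond the upper bound \eqref{cond-hex2} and that the $\textit{o}(1)$'s are uniform. Your sketch correctly reconstructs the two ingredients of that cited proof (the vortex ball construction with the lower bound $2\pi\sum_i|d_i|\,\ln\frac{\kappa}{\sqrt{h_{\rm ex}}}(1-\textit{o}(1))$, and the Jacobian estimate combined with the Cauchy--Schwarz control of $\int\curl A$ and a boundary slicing for the current) and correctly locates where each half of \eqref{cond-hex} is used, so it is the same approach as the source the paper relies on, presented at the same level of detail (the ball construction and Jacobian estimates are invoked rather than proved, exactly as the paper itself treats the whole statement).
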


The next lemma will give us that $\delta(\kappa)$, the side length of the square $Q^{j}_{\delta(\kappa)}$, satisfies \eqref{cond-hex} and will be useful in Proposition~\ref{vortices3}.
\begin{lem}\label{lm-cond-ser}
Under the assumptions of the previous subsection we have
\begin{align}
&\delta(\kappa)^{2}=\frac{1}{\varepsilon_{1}(\kappa,\ell,x_{0},\widetilde{x}_{0},H)}\,\,\frac{1}{\kappa H\,|B_{0}(\widetilde{x}_{0})|}\ln\frac{\kappa}{H\,|B_{0}(\widetilde{x}_{0})|}\,,\quad\qquad{as}~\kappa\longrightarrow+\infty\label{cond-ser1}\\
&\delta(\kappa)^{2}=\varepsilon_{2}(\kappa,\ell,x_{0},\widetilde{x}_{0},H)\,\,\frac{1}{\kappa H\,|B_{0}(\widetilde{x}_{0})|}\left(\ln\frac{\kappa}{H\,|B_{0}(\widetilde{x}_{0})|}\right)^{2}\,,\,\quad{as}~\kappa\longrightarrow+\infty\label{cond-ser2}\,,
\end{align}
where $\varepsilon_{1}(\kappa,\ell,x_{0},\widetilde{x}_{0},H)$ and $\varepsilon_{2}(\kappa,\ell,x_{0},\widetilde{x}_{0},H)$ are uniformly $\textit{o}(1)$ as $\kappa\longrightarrow+\infty$.
\end{lem}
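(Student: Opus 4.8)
The plan is to substitute the explicit asymptotics for the side length $\delta(\kappa)$ recorded in \eqref{delta(kappa)} and then to compare the quantity $\ln\frac{\kappa}{H|B_{0}(\widetilde{x}_{0})|}$ with $\ln\frac{\kappa}{H}$ on a $\rho$-admissible square, with $\rho$ as in \eqref{rho2}. First I would recall from \eqref{delta(kappa)} that, as $\kappa\to+\infty$,
\[
\delta(\kappa)^{2}\sim 2^{-\frac{7}{4}}\,(\kappa H)^{-1}\left(\ln\frac{\kappa}{H}\right)^{\frac{7}{4}}\,,
\]
a statement involving $\kappa$ and $H$ only. Writing $\beta=|B_{0}(\widetilde{x}_{0})|$, the $\rho$-admissibility of $(\ell,x_0,\widetilde{x}_0)$ gives $\rho\leq\beta\leq\overline{\beta}_{0}$, with $\rho=\left(\ln\frac{\kappa}{H}\right)^{-\frac12}$ by \eqref{rho2} and $\overline{\beta}_{0}$ the constant in \eqref{beta0}. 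Hence $\ln\frac{\kappa}{H\beta}=\ln\frac{\kappa}{H}-\ln\beta$ with $|\ln\beta|\leq\max\!\big(\ln\frac{1}{\rho},|\ln\overline{\beta}_{0}|\big)=\mathcal O\!\big(\ln\ln\frac{\kappa}{H}\big)$, so that
\[
\ln\frac{\kappa}{H\beta}\sim\ln\frac{\kappa}{H}\qquad\text{uniformly in }\widetilde{x}_{0}\text{ as }\kappa\to+\infty\,.
\]

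For \eqref{cond-ser1} I would simply take $\varepsilon_{1}:=\dfrac{\ln\frac{\kappa}{H\beta}}{\delta(\kappa)^{2}\,\kappa H\,\beta}>0$, so that the identity holds by construction. Inserting the two displays above yields
\[
\varepsilon_{1}\sim\frac{2^{\frac{7}{4}}\,\ln\frac{\kappa}{H}}{\beta\left(\ln\frac{\kappa}{H}\right)^{\frac{7}{4}}}=\frac{2^{\frac{7}{4}}}{\beta\left(\ln\frac{\kappa}{H}\right)^{\frac{3}{4}}}\leq 2^{\frac{7}{4}}\left(\ln\frac{\kappa}{H}\right)^{-\frac14}\big(1+\textit{o}(1)\big)\,,
\]
where the last step used $\beta\geq\rho=\left(\ln\frac{\kappa}{H}\right)^{-\frac12}$; hence $\varepsilon_{1}$ is uniformly $\textit{o}(1)$. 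Symmetrically, for \eqref{cond-ser2} I would take $\varepsilon_{2}:=\dfrac{\delta(\kappa)^{2}\,\kappa H\,\beta}{\left(\ln\frac{\kappa}{H\beta}\right)^{2}}>0$, so the identity is automatic, and the same substitutions give
\[
\varepsilon_{2}\sim\frac{2^{-\frac{7}{4}}\,\beta\left(\ln\frac{\kappa}{H}\right)^{\frac{7}{4}}}{\left(\ln\frac{\kappa}{H}\right)^{2}}=\frac{2^{-\frac{7}{4}}\,\beta}{\left(\ln\frac{\kappa}{H}\right)^{\frac14}}\leq\frac{2^{-\frac{7}{4}}\,\overline{\beta}_{0}}{\left(\ln\frac{\kappa}{H}\right)^{\frac14}}\big(1+\textit{o}(1)\big)\,,
\]
using $\beta\leq\overline{\beta}_{0}$; thus $\varepsilon_{2}$ is uniformly $\textit{o}(1)$ too. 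Since the dependence on the spatial data $(x_{0},\widetilde{x}_{0})$ enters only through $\beta\in[\rho,\overline{\beta}_{0}]$, whose endpoints are an explicit function of $(\kappa,H)$ and a fixed constant respectively, all the $\textit{o}(1)$'s above are uniform over $\rho$-admissible triples and over $H$ satisfying \eqref{cond-H}, which is the assertion.

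There is no serious difficulty here; the only point requiring care is precisely this uniformity, namely that $\ln\frac{\kappa}{H|B_{0}(\widetilde{x}_{0})|}$ stays comparable to $\ln\frac{\kappa}{H}$ for \emph{every} admissible $\widetilde{x}_{0}$. This is exactly what the lower cut-off $\rho=\left(\ln\frac{\kappa}{H}\right)^{-\frac12}$ chosen in \eqref{rho2} secures, since it prevents $|B_{0}(\widetilde{x}_{0})|$ from decaying faster than an inverse power of $\ln\frac{\kappa}{H}$, so that the logarithmic correction $\ln\beta$ is negligible against $\ln\frac{\kappa}{H}$ uniformly.
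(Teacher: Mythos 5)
Your proof is correct and follows essentially the same route as the paper: both arguments substitute the asymptotics $\delta(\kappa)^{2}\sim 2^{-\frac{7}{4}}(\kappa H)^{-1}\left(\ln\frac{\kappa}{H}\right)^{\frac{7}{4}}$ from \eqref{delta(kappa)} and then use the bounds $\rho\leq|B_{0}(\widetilde{x}_{0})|\leq\overline{\beta}_{0}$ with $\rho=\left(\ln\frac{\kappa}{H}\right)^{-\frac{1}{2}}$ to compare powers of $\ln\frac{\kappa}{H}$, obtaining the same $\left(\ln\frac{\kappa}{H}\right)^{-\frac{1}{4}}$ decay rate for both error factors. The only cosmetic difference is that you define $\varepsilon_{1},\varepsilon_{2}$ explicitly as ratios and estimate them, whereas the paper phrases the same comparison as a chain of inequalities.
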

\begin{proof}~\\
\textbf{Proof of \eqref{cond-ser1}}. We know that for all $\rho$-admissible triple $(\ell, x_{0}, \widetilde{x}_{0})$
$$\frac{1}{\kappa H |B_{0}(\widetilde{x}_{0})|}\ln\frac{\kappa}{H |B_{0}(\widetilde{x}_{0})|}\leq \frac{1}{\kappa H \rho}\ln\frac{\kappa}{H \rho}\leq \frac{3}{2}\,\frac{1}{\kappa H}\left(\ln\frac{\kappa}{H}\right)^{\frac{3}{2}}\,,$$
Also, we know that 
$$
\frac{1}{\kappa H}\left(\ln\frac{\kappa}{H}\right)^{\frac{3}{2}}\ll 2^{-\frac{7}{4}}\,\frac{1}{\kappa H}\left(\ln\frac{\kappa}{H}\right)^{\frac{7}{4}}\sim\delta(\kappa)^{2}\,.
$$
\textbf{Proof of \eqref{cond-ser2}}. On the other hand, we have
$$\frac{1}{\kappa H |B_{0}(\widetilde{x}_{0})|}\left(\ln\frac{\kappa}{H |B_{0}(\widetilde{x}_{0})|}\right)^{2}\geq \frac{1}{\kappa H \overline{\beta}_{0}}\left(\ln\frac{\kappa}{H \overline{\beta}_{0}}\right)^{2}\geq C\,\frac{1}{\kappa H}\left(\ln\frac{\kappa}{H}\right)^{2}\,,
$$
where $C$ is a positive constant and $\overline{\beta}_{0}$ is introduced in \eqref{beta0}.\\
It is clear that
$$
\frac{1}{\kappa H}\left(\ln\frac{\kappa}{H}\right)^{2}\gg 2^{-\frac{7}{4}}\,\frac{1}{\kappa H}\left(\ln\frac{\kappa}{H}\right)^{\frac{7}{4}}\sim\delta(\kappa)^{2}\,.
$$
\end{proof}

We can  prove the following result regarding the vortices of the minimizers in the `\textit{nice squares}'. We start with the admissible  squares contained in $\Omega\cap\{B_{0}>0\}$.
\begin{prop}\label{vortices3}
Under Assumptions~\eqref{B(x)} and \eqref{cond-H} there exists $s_{1},\,s_{2}:(0, +\infty)\longrightarrow(0,+\infty)$ two functions satisfying \eqref{lim} and such that,  for any $(\ell, x_{0}, \widetilde{x}_{0})$ such that $\overline{Q^{j}_{\delta(\kappa)}}\subset\Omega\cap\{B_{0}>\rho\}$ and $\widetilde{x}_{0}\in \overline{Q^{j}_{\delta(\kappa)}}$ for which   $Q^{j}_{\delta(\kappa)}$ is a nice square, and any minimizer $(\psi,\Ab)\in H^{1}(\Omega,\C)\times H^{1}_{\rm div}(\Omega,\R^{2})$ of \eqref{eq-2D-GLf}, there exist disjoint disks $(D(a_{i,j},r_{i,j}))_{i=1}^{m_{j}}$ in $Q^{j}_{\delta(\kappa)}$ such that
\begin{equation}\label{cond-vortices3}
\begin{array}{lll}
 \bullet\quad\sum_{i=1}^{m_{j}} r_{i,j}\leq(\kappa H B_{0}(\widetilde{x}_{0}))^{-\frac{1}{2}}\\
 \bullet\quad|\psi|>\frac{1}{2}~\text{ on}~ \cup_{j}\partial D(a_{i,j},r_{i,j})\\
 \bullet\quad\text{If}~ d_{i,j}~ \text{ is the winding number of} ~\frac{\psi}{|\psi|}~ \text{ restricted to}~ \partial D(a_{i,j},r_{i,j}),~ \text{then}
\end{array}
\end{equation}
\begin{equation}\label{winding}
 2\pi\sum_{i=1}^{m_{j}}d_{i,j}\geq\delta(\kappa)^{2}\kappa H B_{0}(\widetilde{x}_{0})(1- s_{1}(\kappa))\,,\qquad\text{as}~\kappa\longrightarrow+\infty\,,
\end{equation}
and
\begin{equation}\label{winding1}
  2\pi\sum_{i=1}^{m_{j}}|d_{i,j}|\leq \delta(\kappa)^{2}\kappa H B_{0}(\widetilde{x}_{0})(1+s_{2}(\kappa))\,,\qquad\text{as}~\kappa\longrightarrow+\infty\,.
\end{equation}
\end{prop}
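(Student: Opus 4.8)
The plan is to deduce the proposition from Proposition~\ref{serfaty} applied on $K=Q^{j}_{\delta(\kappa)}$ with external field $h_{\rm ex}=\kappa H\,B_{0}(\widetilde{x}_{0})$ and scale $\gamma(\kappa)=\delta(\kappa)$. First I would gauge away the phase: on $Q^{j}_{\delta(\kappa)}$ put $u=e^{-i\kappa H\varphi}\psi$ with $\varphi=\varphi_{x_{0},\widetilde{x}_{0}}+\phi_{x_{0}}$, and $A=\kappa H\,B_{0}(\widetilde{x}_{0})\,\Ab_{0}(\cdot-x_{0})$. Since $\overline{Q^{j}_{\delta(\kappa)}}\subset\Omega\cap\{B_{0}>\rho\}$ we have $\sigma_{\ell}=+1$ and $|B_{0}(\widetilde{x}_{0})|=B_{0}(\widetilde{x}_{0})>0$ there, so $\curl A=h_{\rm ex}$ identically (as $\curl\Ab_{0}=1$), the magnetic term of $J_{Q^{j}_{\delta(\kappa)}}(u,A)$ vanishes, and writing $\psi=e^{i\kappa H\varphi}u$ gives
\begin{equation*}
J_{Q^{j}_{\delta(\kappa)}}(u,A)=\mathcal{E}_{0}\big(\psi,\sigma_{\ell}|B_{0}(\widetilde{x}_{0})|\Ab_{0}(\cdot-x_{0})+\nabla\varphi,Q^{j}_{\delta(\kappa)}\big).
\end{equation*}
Moreover $|u|=|\psi|$, and $e^{-i\kappa H\varphi}$ is globally defined, smooth and unimodular on $\overline{Q^{j}_{\delta(\kappa)}}$, hence has zero winding along every circle in $Q^{j}_{\delta(\kappa)}$; thus $\deg(u/|u|,\cdot)=\deg(\psi/|\psi|,\cdot)$ on such circles.

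Next I would check the hypotheses of Proposition~\ref{serfaty}. Interior elliptic regularity for \eqref{eq-2D-GLeq} gives $\psi\in C^{\infty}$ on the compact set $\overline{Q^{j}_{\delta(\kappa)}}\subset\Omega$, while $A$ and $\varphi$ are smooth, so $(u,A)\in C^{1}(\overline{Q^{j}_{\delta(\kappa)}};\C)\times C^{1}(\overline{Q^{j}_{\delta(\kappa)}};\R^{2})$. On an admissible square $\rho\le B_{0}(\widetilde{x}_{0})\le\overline{\beta}_{0}$ with $\rho$ as in \eqref{rho2} and $\overline{\beta}_{0}$ as in \eqref{beta0}, so \eqref{cond-H} yields $|\ln\kappa|\ll\kappa H B_{0}(\widetilde{x}_{0})\ll\kappa^{2}$ uniformly in the triple; and since $\ln\frac{\kappa}{\sqrt{h_{\rm ex}}}=\tfrac12\ln\frac{\kappa}{H B_{0}(\widetilde{x}_{0})}$ and $\delta(\kappa)^{2}\to 0$, the remaining part of \eqref{cond-hex}, namely $\ln\frac{\kappa}{\sqrt{h_{\rm ex}}}\ll h_{\rm ex}\delta(\kappa)^{2}\ll\min\!\big(h_{\rm ex},(\ln\tfrac{\kappa}{\sqrt{h_{\rm ex}}})^{2}\big)$, is exactly \eqref{cond-ser1}--\eqref{cond-ser2} of Lemma~\ref{lm-cond-ser}, again uniform.

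For the energy bound \eqref{cond-hex2} I would feed the expansion \eqref{main-f(b)} into the nice-square condition \eqref{nice-squares}. With $b=\frac{H}{\kappa}B_{0}(\widetilde{x}_{0})$ one has $\kappa^{2}\hat f(b)=\tfrac12\kappa H B_{0}(\widetilde{x}_{0})\ln\frac{\kappa}{H B_{0}(\widetilde{x}_{0})}(1+\hat s(b))$, hence $\delta(\kappa)^{2}\kappa^{2}\hat f(b)=h_{\rm ex}\delta(\kappa)^{2}\ln\frac{\kappa}{\sqrt{h_{\rm ex}}}(1+\hat s(b))$, so \eqref{nice-squares} becomes
\begin{equation*}
J_{Q^{j}_{\delta(\kappa)}}(u,A)\le h_{\rm ex}\,\delta(\kappa)^{2}\,\ln\frac{\kappa}{\sqrt{h_{\rm ex}}}\,\big(1+\hat s(b)\big)\big(1+h(\kappa,H)^{1/2}\big).
\end{equation*}
Because $0<b\le\frac{H}{\kappa}\overline{\beta}_{0}\to 0$ uniformly, $\hat s(b)=\textit{o}(1)$ uniformly, and $h(\kappa,H)=\textit{o}(1)$ by \eqref{app-ha}; hence $(1+\hat s(b))(1+h(\kappa,H)^{1/2})\le 1+\hat h(\kappa)$ for a positive $\hat h$ with $\hat h(\kappa)\to 0$, which is precisely \eqref{cond-hex2}.

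Proposition~\ref{serfaty} then produces disjoint disks $(D(a_{i,j},r_{i,j}))_{i=1}^{m_{j}}\subset Q^{j}_{\delta(\kappa)}$ with $\sum_{i}r_{i,j}\le h_{\rm ex}^{-1/2}=(\kappa H B_{0}(\widetilde{x}_{0}))^{-1/2}$, with $|u|>\tfrac12$ on $\cup_{i}\partial D(a_{i,j},r_{i,j})$, and with $2\pi\sum_{i}d_{i,j}\ge h_{\rm ex}\delta(\kappa)^{2}(1-s_{1}(\kappa))$, $2\pi\sum_{i}|d_{i,j}|\le h_{\rm ex}\delta(\kappa)^{2}(1+s_{2}(\kappa))$, where $d_{i,j}=\deg(u/|u|,\partial D(a_{i,j},r_{i,j}))$. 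Using $|u|=|\psi|$ and the degree invariance above, this transcribes verbatim into \eqref{cond-vortices3}, \eqref{winding} and \eqref{winding1}; the case $\overline{Q^{j}_{\delta(\kappa)}}\subset\Omega\cap\{B_{0}<-\rho\}$ is identical after complex conjugation (which turns $\sigma_{\ell}$ into $-1$ and reverses the sign of each $d_{i,j}$). The step I expect to be the main obstacle is uniformity bookkeeping: one has to ensure that \emph{every} $\textit{o}(1)$ involved --- $\hat s(b)$, $h(\kappa,H)$, and each of the three asymptotic relations in \eqref{cond-hex} --- is controlled uniformly over all $\rho$-admissible triples, which is why one works with the crude uniform bound $\frac{H}{\kappa}\rho\le b\le\frac{H}{\kappa}\overline{\beta}_{0}$ and with $h(\kappa,H)$ taken as a supremum in \eqref{h}; a secondary subtlety is the degree count, i.e.\ checking that the gauge factor $e^{-i\kappa H\varphi}$ contributes no winding because it extends smoothly across each disk.
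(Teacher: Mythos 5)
Your proposal is correct and follows essentially the same route as the paper: the same gauge choice $u=e^{-i\kappa H\varphi}\psi$, $A=\kappa H B_{0}(\widetilde{x}_{0})\Ab_{0}(\cdot-x_{0})$, the same verification that the magnetic term of $J_{K}$ vanishes so that $J_{K}(u,A)$ equals the local energy in the nice-square condition, the same conversion of \eqref{nice-squares} into \eqref{cond-hex2} via \eqref{main-f(b)}, and the same appeal to Lemma~\ref{lm-cond-ser} for \eqref{cond-hex}. Your extra remarks on interior regularity, on the gauge factor contributing no winding, and on uniformity over admissible triples are points the paper leaves implicit but are correctly handled.
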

\begin{proof}~\\
We will  apply Proposition~\ref{serfaty}  with
\begin{equation}\label{choice-uA}
K= Q^{j}_{\delta(\kappa)},\,\gamma(\kappa)=\delta(\kappa),\, h_{\rm ex}=\kappa H B_{0}(\widetilde x_0),\, u=e^{-i\kappa H \varphi} \psi~{\rm and}~A(x)=\kappa H\,B_{0}(\widetilde{x}_{0})\,\Ab_{0}(x-x_0)\,,
\end{equation}
where $\Ab_{0}$ is the magnetic potential introduced in \eqref{eq-hc2-mpA0} and 
 $\varphi=\phi_{x_{0}}+\varphi_{x_{0},\widetilde{x}_{0}}$, with $\phi_{x_{0}}$  defined in \eqref{alpha} and $\varphi_{x_{0},\widetilde{x}_{0}}$ in  \eqref{F-A}.\\
Let us verify that the conditions of the proposition are satisfied for this choice.\\
First, we start by proving \eqref{cond-hex2}. 
Since $\curl\Ab_{0}=1$, then,
$$\curl A=\kappa H\,B_{0}(\widetilde{x}_{0})=h_{\rm ex}\,,$$
and consequently
\begin{equation}\label{A=hex}
\int_{Q_{\delta(\kappa)}^{j}}|\curl A-h_{\rm ex}|^{2}\,dx=0\,.
\end{equation}
This implies that, for any $j\in\mathcal J^n $
\begin{align}\label{J=E0}
J_{K}(u,A)&=\int_{Q^{j}_{\delta(\kappa)}}\left(|(\nabla-i\kappa H(B_{0}(\widetilde{x}_{0})\Ab_{0}(x-x_0)+\nabla\varphi))\psi|^{2}+\frac{\kappa^{2}}{2}\left(1-|\psi|^{2}\right)^{2}\right)\,dx\nonumber\\
&=\mathcal{E}_{0}(\psi,B_{0}(\widetilde{x}_{0})\Ab_{0}(x-x_0)+\nabla\varphi,Q^{j}_{\delta(\kappa)})\,.
\end{align}
Since $Q^{j}_{\delta(\kappa)}$ is a nice square, then,
\begin{equation}\label{lw-J0}
J_{K}(u,A)\leq\,\delta(\kappa)^{2}\,\kappa^{2}\hat{f}\left(\frac{H}{\kappa}\,B_{0}(\widetilde{x}_{0})\right)(1+h(\kappa,H)^{ \frac{1}{2}})\,,\qquad\text{as}~\kappa\longrightarrow+\infty\,.
\end{equation}
As a consequence of \eqref{main-f(b)}, \eqref{lw-J0} becomes
\begin{equation}\label{lw-J}
J_{K}(u,A)\leq\,\frac{1}{2}\,\delta(\kappa)^{2}\kappa H B_{0}(\widetilde{x}_{0})\,\ln\frac{\kappa}{H B_{0}(\widetilde{x}_{0})}(1+\hat{h}(\kappa,H))\,,\qquad\text{as}~\kappa\longrightarrow+\infty\,,
\end{equation}
where 
$$
\hat{h}(\kappa,H)=h(\kappa,H)^{\frac{1}{2}}+\hat{s}\left(\frac{H}{\kappa}\right)+\hat{s}\left(\frac{H}{\kappa}\right)\,h(\kappa,H)^{\frac{1}{2}}\,.$$
Notice that the function $\hat{h}(\kappa,H)$ is uniformly $\textit{o}(1)$ as $\kappa\longrightarrow+\infty$ and $H$ satisfying \eqref{cond-H}.\\
Secondly we prove \eqref{cond-hex}. In fact, under Assumption~\eqref{cond-H}, we can easly prove, uniformly as $\kappa\longrightarrow+\infty$,
$$
|\ln\kappa|\ll h_{\rm ex}\ll\kappa^{2}\,,
$$
and
$$
{\rm min}\left(h_{\rm ex},\left(\ln\frac{\kappa}{\sqrt{h_{\rm ex}}}\right)^{2}\right)=\left(\ln\frac{\kappa}{\sqrt{h_{\rm ex}}}\right)^{2}\,.
$$
Thanks to Lemma~\ref{lm-cond-ser}, we get that \eqref{cond-hex} is satisfied and in this way  we achieve the proof of Proposition~\ref{vortices3}.
\end{proof}

In light of Lemma~\ref{N'=o(N)}, we deduce from Proposition~\ref{vortices3} the distribution of vortices in a $\rho$-admissible  square $Q_{\ell}$. 
\begin{prop}\label{vortices4}
Suppose that Assumptions~\eqref{B(x)} and \eqref{cond-H} are true. There exists two functions $s_{1},\,s_{2} :(0;+\infty)\longrightarrow(0;+\infty)$ satisfying \eqref{lim} and the following is true. Let $(\psi,\Ab)\in H^{1}(\Omega,\C)\times H^{1}_{\rm div}(\Omega,\R^{2})$ be a minimizer of \eqref{eq-2D-GLf} and $(\ell,x_0)$ such that $\overline{Q_{\ell}(x_{0})}\subset\Omega\cap\{B_{0}>\rho\}$. There exist a family of disjoint disks (indexed by $\mathcal{K}=\mathcal{K}_{\ell,x_{0}}$)  $(D(\widetilde{a}_{k},\widetilde{r}_{k}))_{k\in \mathcal{K}}$ in $Q_{\ell}(x_{0})$ such that
\begin{align}
&\bullet\quad \sum_{k\in\mathcal{K}}\widetilde{r}_{k}\,\leq (\kappa H B_{0}(\widetilde{x}_{0}))^{-\frac{1}{2}}\left(\frac{\ell}{\delta(\kappa)}\right)^{2}\,(1+\textit{o}(1))\,,\qquad{\rm as}~\kappa\longrightarrow+\infty \label{cond-vortices4}\\
& \bullet\quad|\psi|>\frac{1}{2}~\text{ on}~\cup_{k}\partial D(\widetilde{a}_{k},\widetilde{r}_{k})\label{winding4}\\
& \bullet \text{If}~\widetilde{d}_{k}~\text{is the winding number of}~\frac{\psi}{|\psi|}~\text{restricted to}~\partial D(\widetilde{a}_{k},\widetilde{r}_{k}),~\text{then as}~\kappa\longrightarrow+\infty\nonumber
\end{align}
\begin{align}\label{winding2}
2\pi\sum_{k\in \mathcal{K}}\widetilde{d}_{k}\geq\ell^{2}\kappa H\,{ B_{0}(\widetilde{x}_{0})\left(1-s_{1}(\kappa)\right)}\quad{\rm and}\quad 2\pi\sum_{k\in\mathcal{K}}|\widetilde{d}_{k}|\leq\ell^{2}\kappa H\,{ B_{0}(\widetilde{x}_{0})\left(1+\,s_{2}(\kappa)\right)}\,.
\end{align}
Here, the function $\textit{o}(1)$ is bounded independently of the choice of $\widetilde{x}_{0}$ and the minimizer $(\psi,\Ab)$.
\end{prop}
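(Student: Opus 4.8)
The plan is to deduce Proposition~\ref{vortices4} from Proposition~\ref{vortices3} by collecting the vortex disks produced in each \emph{nice} square inside $Q_\ell(x_0)$, the essential point being that, by Lemma~\ref{N'=o(N)}, the \emph{bad} squares occupy an asymptotically negligible fraction of $Q_\ell(x_0)$ and therefore cannot spoil the estimates. Concretely, I would fix a $\rho$-admissible triple $(\ell,x_0,\widetilde x_0)$ with $\overline{Q_\ell(x_0)}\subset\Omega\cap\{B_0>\rho\}$, keep the division of $Q_\ell(x_0)$ into the $\mathcal N=M^2$ squares $(Q^j_{\delta(\kappa)})_{j\in\mathcal J}$ with $\mathcal J=\mathcal J^n\cup\mathcal J^b$ from \eqref{def-M}--\eqref{bad-squares}, and recall $\delta(\kappa)=\ell/M$, so $\mathcal N\,\delta(\kappa)^2=\ell^2$. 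For each $j\in\mathcal J^n$ the square $Q^j_{\delta(\kappa)}$ is nice and contained in $\Omega\cap\{B_0>\rho\}$, so Proposition~\ref{vortices3} gives disjoint disks $(D(a_{i,j},r_{i,j}))_{i=1}^{m_j}\subset Q^j_{\delta(\kappa)}$ with $|\psi|>\tfrac12$ on their boundaries and satisfying \eqref{cond-vortices3}--\eqref{winding1}. I then set $\mathcal K=\mathcal K_{\ell,x_0}=\{(i,j):j\in\mathcal J^n,\ 1\le i\le m_j\}$ and let $(D(\widetilde a_k,\widetilde r_k))_{k\in\mathcal K}$ be the relabelled family. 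Its disjointness is automatic, the nice squares being pairwise disjoint and each disk lying in its own square, and $|\psi|>\tfrac12$ holds on $\cup_k\partial D(\widetilde a_k,\widetilde r_k)$, which is \eqref{winding4}.

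The remaining three estimates then come from summing over $j\in\mathcal J^n$. For the radii,
\[
\sum_{k\in\mathcal K}\widetilde r_k=\sum_{j\in\mathcal J^n}\sum_{i=1}^{m_j}r_{i,j}\le \mathcal N^n\,(\kappa H B_0(\widetilde x_0))^{-\frac12}\le \mathcal N\,(\kappa H B_0(\widetilde x_0))^{-\frac12}=(\kappa H B_0(\widetilde x_0))^{-\frac12}\Big(\frac{\ell}{\delta(\kappa)}\Big)^2,
\]
since $\mathcal N^n\le\mathcal N$ and $\mathcal N\,\delta(\kappa)^2=\ell^2$; this gives \eqref{cond-vortices4}, the $\textit{o}(1)$ being a harmless cushion. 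Likewise, for the upper bound on the total degree,
\[
2\pi\sum_{k\in\mathcal K}|\widetilde d_k|=\sum_{j\in\mathcal J^n}2\pi\sum_{i=1}^{m_j}|d_{i,j}|\le \mathcal N^n\,\delta(\kappa)^2\,\kappa H B_0(\widetilde x_0)\,(1+s_2(\kappa))\le \ell^2\,\kappa H B_0(\widetilde x_0)\,(1+s_2(\kappa)),
\]
again using $\mathcal N^n\,\delta(\kappa)^2\le\ell^2$.

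For the lower bound on the total degree, summing \eqref{winding} over $j\in\mathcal J^n$ gives
\[
2\pi\sum_{k\in\mathcal K}\widetilde d_k\ge \mathcal N^n\,\delta(\kappa)^2\,\kappa H B_0(\widetilde x_0)\,(1-s_1(\kappa)),
\]
so I must show $\mathcal N^n\,\delta(\kappa)^2=\ell^2\,(1-\textit{o}(1))$. Writing $\mathcal N^n\,\delta(\kappa)^2=\ell^2-\mathcal N^b\,\delta(\kappa)^2$ and using Lemma~\ref{N'=o(N)} in the form \eqref{rem-N'=o(N)} --- which, via \eqref{app-ha} and \eqref{cond-H}, yields $\mathcal N^b=\mathcal N^n\,\textit{e}$ with $\textit{e}$ uniformly $\textit{o}(1)$ --- I get $\mathcal N^b\,\delta(\kappa)^2\le \textit{e}\,\mathcal N^n\,\delta(\kappa)^2\le \textit{e}\,\ell^2=\textit{o}(\ell^2)$, hence $\mathcal N^n\,\delta(\kappa)^2=\ell^2(1-\textit{o}(1))$ uniformly. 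Absorbing this factor into a renamed $s_1$ gives \eqref{winding2}. Finally, the uniformity of the resulting $\textit{o}(1)$'s in $\widetilde x_0$, $x_0$ and the minimizer $(\psi,\Ab)$ is inherited from the corresponding uniform statements in Proposition~\ref{vortices3}, Lemma~\ref{N'=o(N)} and the remark following it.

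I do not expect a real analytic difficulty here, since the vortex-ball construction on a single square and the smallness of the number of bad squares have already been established in Proposition~\ref{vortices3} and Lemma~\ref{N'=o(N)}. The only point requiring care is the accounting around the bad squares in the \emph{lower} bound on the total degree: one genuinely needs $\mathcal N^b\,\delta(\kappa)^2=\textit{o}(\ell^2)$ --- that is, that the vortices possibly hidden in the bad squares represent a vanishing fraction of the area of $Q_\ell(x_0)$ --- and this is precisely what Lemma~\ref{N'=o(N)} together with $h(\kappa,H)=\textit{o}(1)$ delivers.
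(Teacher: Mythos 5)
Your proposal is correct and follows essentially the same route as the paper: collect the disks from the nice squares, then use Lemma~\ref{N'=o(N)} (via \eqref{rem-N'=o(N)} and $\ell^2=(\mathcal N^n+\mathcal N^b)\delta(\kappa)^2$) to get $\mathcal N^n\,\delta(\kappa)^2=\ell^2(1-\textit{o}(1))$ and sum the estimates of Proposition~\ref{vortices3}. If anything, you are slightly more careful than the paper in the lower bound on the total degree, where the needed inequality $\mathcal N^n\,\delta(\kappa)^2\geq\ell^2(1-\textit{o}(1))$ must indeed be absorbed into a renamed $s_1$ rather than read off directly.
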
 

\begin{proof}
Recall that $Q_{\ell}(x_0)$ is decomposed into $\mathcal N^n$   `\textit{nice squares}' $(Q^{j}_{\delta(k)})_{j\in\mathcal J^n }$ and $\mathcal N^b$  `\textit{bad squares}' $(Q^{j}_{\delta(k)})_{j\in\mathcal J^b }\,$.\\
 In every nice square $Q^{j}_{\delta(\kappa)}$, Proposition~\ref{vortices3} tells us that there exist disjoint disks $ (D(a_{i,j},r_{i,j}))_{i=1}^{m_{j}}$ such that \eqref{cond-vortices3}, \eqref{winding} and \eqref{winding1} hold. Let $ (D(\widetilde{a}_{k},\widetilde{r}_{k}))_{k\in\mathcal{K}}=\left( D(a_{i,j},r_{i,j})\right)_{i,j}$ be the family of disjoint disks in $\cup_{j\in\mathcal J^n }Q^{j}_{\delta(\kappa)}$. Clearly $$\sum_{k\in \mathcal{K}}\widetilde{r}_{k}=\sum_{j\in\mathcal J^n } \sum_{i=1}^{m_{j}}r_{i,j}\,.$$
This implies that
\begin{align*}
\sum_{k\in \mathcal{K}}\widetilde{r}_{k}\leq (\kappa H B_{0}(\widetilde{x}_{0}))^{-\frac{1}{2}}\mathcal N^n\,.
\end{align*}
Having in mind \eqref{area-square} and \eqref{rem-N'=o(N)}\,, we have, as $\kappa\longrightarrow+\infty\,$, 
\begin{equation}\label{Ndelta=L}
\mathcal N^n\,\frac{\delta(\kappa)^{2}}{\ell^{2}}\longrightarrow 1\,.
\end{equation}
This implies that 
$$
\sum_{k\in \mathcal{K}}\widetilde{r}_{k}\leq(\kappa H B_{0}(\widetilde{x}_{0}))^{-\frac{1}{2}}\left(\frac{\ell}{\delta(\kappa)}\right)^{2}(1+\textit{o}(1))\,,
$$
where the function $\textit{o}(1)$ is bounded independently of the choice of $\widetilde{x}_{0}$.\\
Let $\widetilde{d}_{k}$ be the winding number of $\frac{\psi}{|\psi|}$ restricted to $\partial D(\widetilde{a}_{k},\widetilde{r}_{k})\,$, then, for any $k\in\mathcal{K}$
\begin{align*}
\sum_{k\in \mathcal{K}}\widetilde{d}_{k}=\sum_{j\in\mathcal J^n } \sum_{i=1}^{m_{j}} d_{i,j}\,.
\end{align*}
Since from \eqref{winding}, \eqref{winding1} and \eqref{Ndelta=L}, we get, $\text{as}~\kappa\longrightarrow+\infty\,,$
\begin{align}\label{winding3}
 2\pi \sum_{k\in\mathcal{K}}\widetilde{d}_{k}= 2\pi\sum_{j\in\mathcal J^n }\sum_{i=1}^{m_{j}}d_{i,j}&\geq\mathcal N^n\delta(\kappa)^{2}\kappa H  B_{0}(\widetilde{x}_{0}) \left(1-s_{1}(\kappa)\right)  \nonumber\\
&\geq\ell^{2}\kappa H  B_{0}(\widetilde{x}_{0}) \left(1-s_{1}(\kappa)\right) \,, 
\end{align}
 and
\begin{align}\label{winding33}
2\pi \sum_{k\in\mathcal{K}}|\widetilde{d}_{k}|= 2\pi\sum_{j\in\mathcal J^n }\sum_{i=1}^{m_{j}}|d_{i,j}|&\leq\mathcal N^n\delta(\kappa)^{2}\kappa H  B_{0}(\widetilde{x}_{0}) \left(1+s_{2}(\kappa)\right) \nonumber\\
&\leq\ell^{2}\kappa H  B_{0}(\widetilde{x}_{0}) \left(1+s_{2}(\kappa)\right)\,.
\end{align}
This finishes the proof of Proposition~\ref{vortices4}.
\end{proof}
\subsection{Proof of Theorem~\ref{distribution-vortices}}
Let $(\psi,\Ab)\in H^{1}(\Omega,\C)\times H^{1}_{\rm div}(\Omega,\R^{2})$ be a minimizer of \eqref{eq-2D-GLf} and $\Gamma_{\ell}:=\ell\Z\times\ell\Z$ a lattice of $\R^{2}$. For all $\gamma\in\Gamma_{\ell}$, we consider the family of squares  $Q_{\ell}(\gamma)$ and $\widetilde{\gamma}\in Q_{\ell}(\gamma)$. Consider an open set $S\subset \Omega\cap\{B_{0}>0\}$ such that the boundary of $S$ is smooth. Let
\begin{equation}
\mathcal{J}_{\ell}=\{\gamma\,;\quad\overline{Q_{\ell}(\gamma)}\subset S\cap\{B_0>\rho\}\, \}\,,
\end{equation}
\begin{equation}
\mathcal{M}=\text{card}\,\mathcal{J}_{\ell}\,,
\end{equation}
and
\begin{equation}
S_{\ell}=\text{int}\,\left(\cup_{\gamma\in\mathcal{J}_{\ell}} \overline{Q_{\ell}(\gamma)}\right)\,.
\end{equation}
Then, as $\kappa\longrightarrow+\infty$, we have
\begin{equation}\label{cardinal}
\mathcal{M}\times\ell^{2}\longrightarrow |S|\,.
\end{equation}
\textbf{Proof of \eqref{p2}:}~\\
Proposition~\ref{vortices4} tells us that there exist disjoint disks $ (D(\widetilde{a}_{k,\gamma},\widetilde{r}_{k,\gamma}))_{k\in \mathcal{K}_{\ell,\gamma}}$ in each square $Q_{\ell}(\gamma)$ with $\overline{Q_{\ell}(\gamma)}\subset\Omega\cap\{B_{0}>\rho\}$ such that \eqref{cond-vortices4} and \eqref{winding2} hold. We introduce the measure by
\begin{equation}
\mu_{\kappa}:=\frac{2\pi}{\kappa H}\sum_{\gamma\in\mathcal{J}_{\ell}}\sum_{k\in\mathcal{K}_{\ell,\gamma}}\widetilde{d}_{k,\gamma}\,\delta_{\widetilde{a}_{k,\gamma}}\,,
\end{equation}
where $\widetilde{d}_{k,\gamma}$ is the winding number introduced before \eqref{winding2} and  $\delta_{\widetilde{a}_{k,\gamma}}$ is the unit Dirac mass at $\widetilde{a}_{k,\gamma}$.\\
Having in mind \eqref{winding2} we have for any $(\ell,\gamma,\widetilde{\gamma})$ such that $\overline{Q_{\ell}(\gamma)}\subset\Omega\cap\{B_{0}>\rho\}$ and $\widetilde{\gamma}\in \overline{Q_{\ell}(\gamma)}$
$$
B_{0}(\widetilde{\gamma})\ell^{2}(1-s_{1}(\kappa))\leq\frac{2\pi}{\kappa H}\sum_{k\in\mathcal{K}_{\ell,\gamma}}\widetilde{d}_{k,\gamma}\leq B_{0}(\widetilde{\gamma})\ell^{2}(1+s_{2}(\kappa))\,.
$$
Using \eqref{cardinal}, we obtain
\begin{equation}\label{1st}
\left(\sum_{\gamma\in \mathcal{J}_{\ell}} B_{0}(\widetilde{\gamma})\ell^{2}\right)-2\,\overline{\beta}_{0}|S|\,s_{1}(\kappa)\leq\frac{2\pi}{\kappa H}\sum_{\gamma\in\mathcal{J}_{\ell}}\sum_{k\in\mathcal{K}_{\ell,\gamma}}\widetilde{d}_{k,\gamma}\leq \left(\sum_{\gamma\in \mathcal{J}_{\ell}} B_{0}(\widetilde{\gamma})\ell^{2}\right)+2\,\overline{\beta}_{0}|S|\,s_{2}(\kappa)\,.
\end{equation}
Here, we have used the fact that $B_{0}(\widetilde{\gamma})\leq\overline{\beta}_{0}$ to estimate the errors terms, where $\overline{\beta}_{0}$ is introduced in \eqref{beta0}.\\
Now it is time to determine $\sum_{\gamma\in \mathcal{J}_{\ell}} B_{0}(\widetilde{\gamma})\ell^{2}$. We will do this in two steps:\\
\textbf{Upper bound:} Notice that till now $\widetilde{\gamma}$ was an arbitrary point in $Q_{\ell}(\gamma)$, but that our estimates are independent of this choice. We now select $\widetilde{\gamma}\in \overline{Q_{\ell}(\gamma)}$ such that $B_{0}(\widetilde{\gamma})=\underline{B}_{\gamma,\ell}$ with $\underline{B}_{\gamma,\ell}$ satisfying \eqref{inf-B}, and get 
 $$\sum_{\gamma\in \mathcal{J}_{\ell}} B_{0}(\widetilde{\gamma})\ell^{2}=\sum_{\gamma\in \mathcal{J}_{\ell}}\underline{B}_{\gamma,\ell}\ell^{2}\,.$$
We recognize in the right hand side above the lower Riemann sum of $x\longrightarrow B_{0}(x)$ and we use that $S_{\ell}\subset S$ to obtain
 \begin{equation}\label{up-B0}
\sum_{\gamma\in \mathcal{J}_{\ell}} B_{0}(\widetilde{\gamma})\ell^{2}\leq\int_{S} B_{0}(x)\,dx\,.
 \end{equation}
\textbf{Lower bound:} We select $\widetilde{\gamma}\in Q_{\ell}(\gamma)$ such that $B_{0}(\widetilde{\gamma})=\overline{B}_{\gamma,\ell}$ with $\overline{B}_{\gamma,\ell}$ satisfies \eqref{supB}. Similarly to what we did in the upper bound above, we get
$$
\sum_{\gamma\in \mathcal{J}_{\ell}}B_{0}(\widetilde{\gamma})\ell^{2}\geq\int_{S_{\ell}} B_{0}(x)\,dx\,.
$$
Notice that using the regularity of $\partial S$ and \eqref{B(x)}, we have as $\kappa\longrightarrow+\infty$
\begin{equation}\label{area-2nd}
|S\setminus S_{\ell}|=\mathcal{O}(\ell|\partial S|)\,.
\end{equation}
Therefore
\begin{align*}
\int_{S_{\ell}} B_{0}(x)\,dx&= \int_{S} B_{0}(x)\,dx-\int_{S\setminus S_{\ell}} B_{0}(x)\,dx\\
&\geq \int_{S} B_{0}(x)\,dx-\overline{\beta}_{0}|S\setminus S_{\ell}|\\
&\geq \int_{S} B_{0}(x)\,dx-C\,\ell\,,
\end{align*}
where $\overline{\beta}_{0}$ is introduced in \eqref{beta0} and $C$ is a positive constant.\\
This implies that
\begin{equation}\label{lw-B0}
\sum_{\gamma\in \mathcal{J}_{\ell}} B_{0}(\widetilde{\gamma})\ell^{2}\geq\int_{S} B_{0}(x)\,dx-C\,\ell\,.
\end{equation}
The estimates in \eqref{up-B0} and \eqref{lw-B0} allow us to deduce from \eqref{1st} that
\begin{equation}
-C\,\ell-2\,\overline{\beta}_{0}|S|\,s_{1}(\kappa)\leq\mu_{\kappa}(S)-\int_{S} B_{0}(x)\,dx\leq+2\,\overline{\beta}_{0}|S|\,s_{2}(\kappa)\,.
\end{equation}
Consequently, as $\kappa\longrightarrow+\infty$
\begin{equation}\label{conv}
\mu_{\kappa}(S) \longrightarrow \int_{S} B_{0}(x)\,dx\,,\qquad\forall S\subset\Omega\cap\{B_{0}>0\}\,.
\end{equation}
In light of \eqref{conv}, we can easily show that $\mu_{\kappa}$ converge weakly to $\mu=B_{0}(x)\,dx$, which means that:
$$
\mu_{\kappa}(f)\longrightarrow\mu(f)\,,\qquad\forall f\in C_{0}(\Omega\cap\{B_{0}>0\})\,.
$$
\textbf{Proof of \eqref{p1}:} We will prove that the sum of the radii of the disks $ (D(\widetilde{a}_{k,\gamma},\widetilde{r}_{k,\gamma}))_{k\in \mathcal{K}_{\ell,\gamma}\,\gamma\in\mathcal{J}_{\ell}}$ is less than 
$$(\kappa H)^{\frac{1}{2}}\,\left(\ln\frac{\kappa}{H}\right)^{-\frac{7}{4}}\,\int_{S}\frac{1}{\sqrt{B_{0}(x)}}\,dx\,(1+\textit{o}(1))\,.$$
In fact, remembering the choice of $\delta(\kappa)$ in \eqref{delta(kappa)}, \eqref{cardinal} and that $\overline{Q_{\ell}(\gamma)}\subset\Omega\cap\{B_{0}>0\}$, we have
\begin{align}
\sum_{\gamma\in\mathcal{J}_{\ell}} \widetilde{r}_{k,\gamma}&=\sum_{\gamma\in\mathcal{J}_{\ell}}\sum_{k\in\mathcal{K}}\widetilde{r}_{k}\nonumber\\
&\leq (\kappa H)^{\frac{1}{2}}\,\left(\ln\frac{\kappa}{H}\right)^{-\frac{7}{4}}\,\sum_{\gamma\in\mathcal{J}_{\ell}}\frac{1}{\sqrt{B_{0}(\widetilde{\gamma})}}\,\ell^{2}\,(1+\textit{o}(1))\,.\label{sum-r}
\end{align}
We select $\widetilde{\gamma}\in \overline{Q_{\ell}(\gamma)}$ such that
$$
\frac{1}{\sqrt{B_{0}(\widetilde{\gamma})}}=\inf_{\widehat{\gamma}\in Q_{\gamma,\ell}} \frac{1}{\sqrt{B_{0}(\widehat{\gamma})}}\,,
$$
and we recognize in the right hand side of \eqref{sum-r} the lower Riemann sum of $x\longrightarrow\frac{1}{\sqrt{B_{0}(x)}}$, we get
$$
\sum_{\gamma\in\mathcal{J}_{\ell}} \widetilde{r}_{k,\gamma}\leq (\kappa H)^{\frac{1}{2}}\,\left(\ln\frac{\kappa}{H}\right)^{-\frac{7}{4}}\,\int_{S}\frac{1}{\sqrt{B_{0}(x)}}\,dx\,(1+\textit{o}(1))\,.
$$
\textbf{End of the proof of Theorem~\ref{distribution-vortices}:}
In $\{B_{0}<0\}\cap\Omega$, we apply Proposition~\ref{serfaty}  with
\begin{equation}\label{choice-uA2}
K= Q^{j}_{\delta(\kappa)},\,\gamma(\kappa)=\delta(\kappa),\, h_{\rm ex}=-\kappa H B_{0}(\widetilde x_0),\, u=e^{i\kappa H \varphi} \overline{\psi}~{\rm and}~A(x)=-\kappa H\,B_{0}(\widetilde{x}_{0})\,\Ab_{0}(x-x_0)\,.
\end{equation}
So we get that, the convergence of mesure $\mu_{\kappa}$ in \eqref{conv} is still true when $S\subset\Omega\cap\{B_{0}<0\}$.\\
Similarly, we can control the convergence of $|\mu_{\kappa}|(S)$. Now we observe that the support of $\mu_{\kappa}$ does not meet $\{B_{0}=0\}$. Hence $\mu_{\kappa}(S)=\mu_{\kappa}(S\cap\{B_{0}<0\})+\mu_{\kappa}(S\cap\{B_{0}>0\})$ and we can apply the previous arguments to $S_{-}=S\cap\{B_{0}<0\}$ and $S_{+}=S\cap\{B_{0}>0\}$.

\section*{Acknowledgements}
This work is partially supported by a grant from Lebanese University.
I would like to thank my supervisors \textit{B.Helffer} and \textit{A.Kachmar} for their support, \textit{E.Sandier} for discussions around vortices and \textit{S.Fournais} for his help in the proof of Proposition~\ref{FR}.

\begin{figure}[ht!]
\begin{center}
\includegraphics[scale=1]{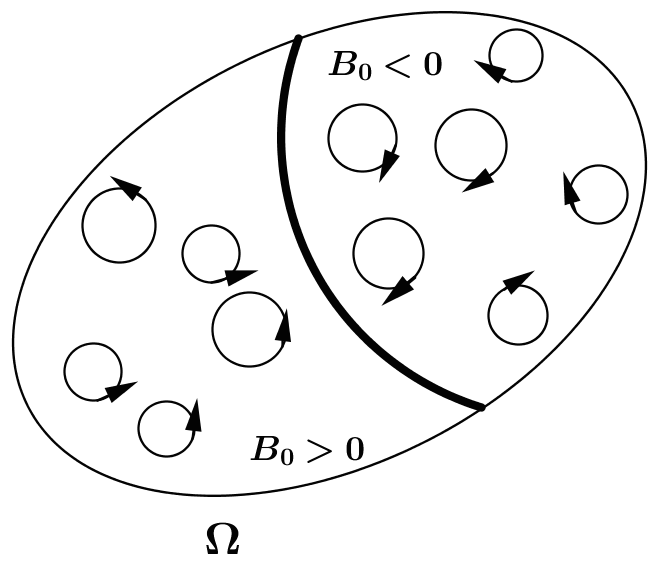} 
\caption{Vortices}
\end{center}
\end{figure}

\end{document}